\documentclass[12pt,oneside]{amsart}
\usepackage{amsfonts}
\usepackage{amsmath}
\usepackage{amssymb}
\usepackage{mathrsfs}
\usepackage{graphicx}
\usepackage{subfigure}
\usepackage{hyperref}
\usepackage{microtype}
\usepackage{enumerate}
\usepackage{epsfig}
\usepackage{cancel}
\usepackage{color}
\usepackage{cite}
\usepackage{makecell}
\usepackage{multirow}
\usepackage{footmisc}
\usepackage{wrapfig}
\usepackage{array}
\usepackage{titletoc}
\usepackage{bm}
\usepackage{times}
\hypersetup{
colorlinks=true,
linkcolor=blue,
filecolor=blue,
urlcolor=blue,
citecolor=blue,}

\usepackage{geometry}
\usepackage{multirow}
\newtheorem{theorem}{Theorem}[section]
\newtheorem{lemma}[theorem]{Lemma}

\newtheorem{proposition}[theorem]{Proposition}

\numberwithin{equation}{section}

\theoremstyle{remark}

\allowdisplaybreaks[3]

\def\De{\Delta}

\def\na{\nabla}
\def\pat{\partial_t}

\def\lan{\langle}
\def\ran{\rangle}

\def\re{\operatorname{Re}}

\newcommand{\la}{\lambda}
\newcommand{\al}{\alpha}

\newcommand{\vp}{\varphi}

\newcommand{\R}{\mathbb{R}}

\newcommand{\Z}{\mathbb{Z}}
\newcommand{\T}{\mathbb{T}}

\newcommand{\f}{\forall}
\newcommand{\Om}{\Omega}
\newcommand{\om}{\omega}

\newcommand{\n}[1]{\Vert #1\Vert }
\newcommand{\bn}[1]{\big \Vert #1 \big \Vert }
\newcommand{\bbn}[1]{\Big\Vert #1 \Big \Vert }
\newcommand{\lr}[1]{\left\{ #1 \right\} }
\newcommand{\lrc}[1]{\left[ #1 \right] }
\newcommand{\lrs}[1]{\left( #1 \right) }
\newcommand{\lrv}[1]{\left| #1 \right| }
\newcommand{\lra}[1]{\langle #1 \rangle }
\newcommand{\blra}[1]{\big\langle #1\big\rangle }
\newcommand{\bblra}[1]{\Big\langle #1\Big\rangle }

\newcommand{\babs}[1]{\big | #1 \big|}
\newcommand{\bbabs}[1]{\Big | #1 \Big|}
\newcommand{\wt}[1]{\widetilde{#1}}

\newcommand{\pa}{\partial}

\begin{document}
\title[Asymptotic Stability for the 2-D Monotone Shear Flow]{Asymptotic stability threshold of the 2-D monotone shear flow with no-slip boundary condition}

\author[Z. Li]{Zhen Li}
\address{School of Mathematical Sciences, Key Laboratory of Mathematics and Complex Systems, Ministry of Education, Beijing Normal University, 100875 Beijing, China}
\email{lizhen@bnu.edu.cn}

\author[S. Shen]{Shunlin Shen}
\address{School of Mathematical Sciences, University of Science and Technology of China, Hefei, 230026, China}
\email{slshen@ustc.edu.cn}

\author[Z. Zhang]{Zhifei Zhang}
\address{School of Mathematical Sciences, Peking University, Beijing, 100871, China}

\email{zfzhang@math.pku.edu.cn}



\begin{abstract}
In this paper, we investigate the asymptotic stability threshold problem for the 2-D Navier-Stokes equations in a finite channel with no-slip boundary conditions, around monotone shear flow $(U(t,y),0)$.
We establish that the flow is asymptotically stable under perturbations satisfying
$\|u^{\mathrm{in}}\|_{H^2}\leq c\nu^{\frac12}$. To achieve the stability threshold $\nu^{\frac{1}{2}}$, the key ingredients of the proof include: sharp resolvent estimates for the vorticity based on weak-type resolvent bounds; weighted space-time estimates for the vorticity;  pointwise estimates for the velocity.
Furthermore, we handle the nonlinear term through a divergence formulation, which facilitates the sharp application of the aforementioned space-time estimates.\end{abstract}

\maketitle

\section{Introduction}\label{sec:Introduction}

Hydrodynamic stability, a cornerstone of fluid mechanics, traces its roots to Reynolds' seminal experiment, which revealed that the transition between laminar and turbulent flow states is governed by the Reynolds number $Re$, a dimensionless parameter quantifying the balance between inertial and viscous forces. In this context, the incompressible Navier-Stokes equations serve as the canonical mathematical framework for describing such viscous incompressible flows, and investigating their stability around shear flows in confined geometries (e.g., finite channels):
\begin{equation}\label{NS}
	\left\{
    \begin{aligned}
    &\pat v + (v\cdot\na)v -\nu\De v+ \na P =0, \\
    &\na\cdot v = 0.
    \end{aligned}
	\right.
	\end{equation}
Here, $v$ is the velocity field, $P$ is the pressure, and $\nu=Re^{-1}>0$ is the viscosity coefficient.

A key mathematical question in this field-first posed heuristically by Trefethen et al. \cite{Trefethen} and later formalized into a rigorous mathematical framework by Bedrossian, Germain, and Masmoudi \cite{Bedrossian-Germain-Masmoudi-4}, concerns the transition threshold for flow stability:{

\it  Given a norm $\n{\cdot}_X$, determine a $\beta=\beta(X)$ such that
\begin{align*}
&\|u^{\mathrm{in}}\|_{X}\ll \nu^{\beta}\Longrightarrow \text{ stability},\\
&\|u^{\mathrm{in}}\|_{X}\gg \nu^{\beta}\Longrightarrow \text{ instability}.
\end{align*}
}
\noindent This exponent $\beta$ quantifies the maximum amplitude of initial perturbations that the base flow can tolerate while remaining stable. Its scaling with $\nu$ directly reflects the interplay between viscous damping, inviscid effects, and boundary constraints, making it a benchmark for understanding transition mechanisms.

Over the past decade, significant progress has been made in resolving the transition threshold problem for prototypical shear flows, including plane Couette flow \cite{Bedrossian-Germain-Masmoudi-4,Bedrossian-Germain-Masmoudi-1,Bedrossian-Germain-Masmoudi-2,Chen-Wei-Zhang2,Wei-Zhang-1}, plane Poiseuille flow \cite{Chen-Ding-Lin-Zhang}, and Kolmogorov flow \cite{Li-Wei-Zhang1}. These investigations highlight that two key physical effects, inviscid damping and enhanced dissipation driven by the shear-flow-induced mixing mechanism, are instrumental in the early stage of the transition. For further details, see the review papers \cite{WZ-ICM, Bedrossian-Germain-Masmoudi-4}.

The two-dimensional incompressible Navier-Stokes equations do not develop finite-time singularities, yet their long-time dynamics at high Reynolds numbers are remarkably rich. Even near steady solutions, distinct dynamical behaviors can arise under different topological constraints. For instance, near the Couette flow, traveling wave solutions exist under low-regularity Sobolev perturbations \cite{LZ}, whereas the Couette flow is asymptotically stable under strong Gevrey-class perturbations due to the inviscid damping effect  \cite{BM}.  In this paper, we investigate the {\bf asymptotic stability threshold problem} for shear flows, that is, {\it we determine the size of initial perturbations (depending on various physical factors) under which solutions to the two-dimensional Navier-Stokes equations at high Reynolds numbers are asymptotically stable and satisfy both enhanced dissipation and inviscid damping.}

The asymptotic stability threshold problem for the Couette flow is now well-understood, with the stability threshold $\beta$ found to depend delicately on three key factors:
the regularity of the initial perturbation, the spatial domain, and the imposed boundary conditions. To contextualize our work, we summarize key existing results: \smallskip

 For the unbounded domain $\Om=\mathbb{T}\times \R$,
\begin{itemize}

\item if $X$ is  Gevrey class $2+$, then $\beta=0$ \cite{Bedrossian-Masmoudi-Vicol};

\item if $X$ is Sobolev space, then $\beta\le \frac13$ \cite{Masmoudi-Zhao, Wei-Zhang-2};

\item if $X$ is Gevrey class $\frac 1s$, $ s\in[0,\frac12]$, then $\beta\le \frac{1-2s}{3(1-s)}$ \cite{Li-Masmoudi-Zhao}.
\end{itemize}

For the bounded channel $\Om=\mathbb{T}\times [-1,1]$, the following results have been established under different physical boundary conditions:
\begin{itemize}
\item if $X$ is Sobolev space with no-slip BC, then $\beta\le \frac12$ \cite{Chen-Li-Wei-Zhang};

\item if $X$ is Sobolev space with Navier-slip BC, then $\beta\le \frac13$ \cite{Wei-Zhang-3};

\item if $X$ is  Gevrey class $2+$ with Navier-slip BC, then $\beta=0$ \cite{Bedrossian-He-Iyer-Wang1}.
\end{itemize}

For further developments on this topic, we refer the reader to \cite{An-He-Li, Arbon-Bedrossian, Bedrossian-He-Iyer-Wang, bgmz-cmp2025, cl-non2025, Chen-Li-Miao, Chen-Jia-Wei-Zhang,Gal,Zelati-Elgindi-Widmayer, del-sima2023, Jia, Li-Zhao, Masmoudi-Zhao,  Wei-Zhang-3, Wei-Zhang-2, lmz-cpam2024, mz-cpde2020, Wei-Zhang-Zhao-2} and the references therein.

While this body of work has focused primarily on the Couette flow, a natural and practically relevant extension is to ask: To what extent do the stability mechanisms (inviscid damping, enhanced dissipation) and threshold scalings identified for the Couette flow persist for more general, time-dependent monotone shear flows? The present paper is to extend the stability threshold analysis to a broad class of monotone shear profiles whose evolution is governed by a diffusion process.

Specifically, we consider a background shear flow $U(t,y)$ satisfying the one-dimensional heat equation with boundary conditions
\begin{equation}
\left\{
\begin{aligned}
&\pa_{t}U=\nu\pa_{y}^{2}U,\\
&U(0,y)=U^{\mathrm{in}}(y),\quad U(t,0)=U^{\mathrm{in}}(0),\quad U(t,1)=U^{\mathrm{in}}(1).
\end{aligned}
\right.
\end{equation}
The initial profile $U^{\mathrm{in}}(y)$ is a strictly monotone, and either concave or convex function:
\begin{equation}\label{equ:condition,shear,flow,M}
(\mathrm{M}):
\left\{
\begin{aligned}
 &U^{\mathrm{in}}\in H^4(0,1),\quad \quad \pa_{y}U^{\mathrm{in}}\geq c_0>0,\\
 &\text{$\pa_{y}^{2}U^{\mathrm{in}}\geq 0$, or $\pa_{y}^{2}U^{\mathrm{in}}\leq 0$},
\quad \pa_{y}^{2}U^{\mathrm{in}}(0)=\pa_{y}^{2}U^{\mathrm{in}}(1)=0.
\end{aligned}
\right.
\end{equation}

Let $u = v - (U(t,y), 0)$ denote the velocity perturbation. Under the no-slip boundary condition on $u$, we derive that
\begin{equation}\label{pertu}
\left\{
\begin{aligned}
&\pat u+u\cdot\na u-\nu \De u+(\pa_yU u^{(2)},0)+U(t,y)\pa_xu+\na p=0,\\
&\na\cdot u=0,\\
&u(t,x,0)=u(t,x,1)=0,\quad u(0,x,y)=u^{\mathrm{in}}(x,y).
\end{aligned}
\right.
\end{equation}

By introducing the vorticity $\om=\pa_y u^{(1)}-\pa_x u^{(2)}$ and the stream function $\psi$ defined via the relation $u=(\pa_{y}\psi,-\pa_x \psi)$,
the governing system can be reformulated in terms of the vorticity:
\begin{equation}\label{equ: omli11}
\left\{
\begin{aligned}
&\pat\om-\nu\De \om+u\cdot\na \om+U(t,y)\pa_x\om-\pa^2_{y}U(t,y)\psi=0,\\
&\De\psi=\om,\quad  u=(\pa_{y}\psi,-\pa_x \psi),\\
&\psi(t,x,0)=\psi(t,x, 1)=\pa_y\psi(t,x,0)=\pa_y \psi(t,x,1)=0,\\
&\om(0,x,y)=\om^{\mathrm{in}}(x,y).
\end{aligned}
\right.
\end{equation}

The main result of this paper is stated as follows.

\begin{theorem}\label{Th: tran thre}
Let $(\om,\psi)$ be the solution to \eqref{equ: omli11}. There exist positive constants $\nu_0$, $\epsilon_{0}$ and $c$, such that if the initial perturbation satisfies $\|u^{\mathrm{in}}\|_{H^2}\leq c\nu^{\frac12}$, $0<\nu\leq \nu_{0}$, then the solution $(\om,u)$ satisfies the global stability estimate
\begin{align*}
\sum_{k\in\Z} E_k \leq C\nu^{\frac12},
\end{align*}
where the stability norm is given by
\begin{equation*}
E_k=\left\{
\begin{aligned}
&\|\om_0\|_{L^\infty L^2}, \quad k=0,\\
&|k|\| e^{\epsilon_0 \nu^{\frac13}t} u_k\|_{L^2L^2}+\|e^{\epsilon_0 \nu^{\frac13}t} u_k\|_{L^\infty L^\infty}\\
&+\|e^{\epsilon_0 \nu^{\frac13}t} \sqrt{1-(2y-1)^2} \om_k\|_{L^\infty L^2}+\nu^{\frac14}|k|^{\frac12}\|e^{\epsilon_0 \nu^{\frac13}t} \om_k\|_{L^2L^2}, \quad k\neq 0,
\end{aligned}
\right.
\end{equation*}
with $f_k(t,y)=:\int_{\T} f(t,x,y) e^{-ikx}dx$. Here, $\n{f}_{L^p L^q}$ denotes the mixed norm $\n{f}_{L_t^p L_y^q}$.
\end{theorem}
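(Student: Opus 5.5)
The plan is a bootstrap (continuity) argument on the total energy $E(T)=\sum_k E_k(T)$, where each $E_k$ is restricted to $[0,T]$. We take the Fourier transform in $x$. For $k\neq0$, the $k$-th mode satisfies the linearized equation
\begin{align*}
\pat\om_k-\nu(\pa_y^2-k^2)\om_k+ikU(t,y)\om_k-ik\pa_y^2U(t,y)\psi_k=-ik f^{(1)}_k-\pa_y f^{(2)}_k,
\end{align*}
with no-slip conditions on $\psi_k$. Here the nonlinearity is written in divergence form, $u\cdot\na\om=\na\cdot(u\om)$, so that $f=u\om$ and $f_k=\sum_{l}u_{l}\om_{k-l}$. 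The zero mode $\om_0$ satisfies a heat equation forced by $\pa_y(u\om)_0$.

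The core step is a space-time linear estimate for the inhomogeneous linearized problem. We aim to show
\begin{align*}
E_k\lesssim \|u_k^{\mathrm{in}}\|_{H^2}+\nu^{-\frac12}\|e^{\epsilon_0\nu^{\frac13}t}f^{(1)}_k\|_{L^2L^2}+\nu^{-\frac12}|k|^{-1}\|e^{\epsilon_0\nu^{\frac13}t}\sqrt{1-(2y-1)^2}^{-1}\cdots\|_{L^2L^2}.
\end{align*}
The forcing weights are to be made precise: the term $\pa_y f^{(2)}_k$ is paired with the weight $\sqrt{1-(2y-1)^2}$ so that the boundary-layer loss is absorbed.

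To prove this linear estimate, we first freeze time in $U$ and establish resolvent estimates for $-\nu(\pa_y^2-k^2)+ik(U-\lambda)-ik\pa_y^2U\Delta_k^{-1}$ with no-slip conditions. These are the sharp $\nu^{-1/3}|k|^{-2/3}$ bound and the weak-type bounds for the velocity, namely $\|u_k\|_{L^\infty}$ and $|k|\|u_k\|_{L^2}$ against $H^{-1}$-type forcing. The monotonicity $\pa_yU\ge c_0$ and the sign condition on $\pa_y^2U$ give a Rayleigh-type coercivity: with $\pa_y^2U$ of one sign, the multiplier $(U-\lambda)/\pa_y^2U$ trick or a Hardy-type inequality controls the nonlocal term. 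We then add a boundary-layer corrector to enforce the extra Neumann condition on $\psi$, as in the Couette analysis of \cite{Chen-Li-Wei-Zhang}.

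Since $U$ depends on $t$ only on the slow scale, with $\pa_tU=O(\nu)$, the resolvent bounds transfer to space-time bounds through a time-dependent energy method. We use a weighted energy functional combining $\|\sqrt{1-(2y-1)^2}\,\om_k\|^2$, a velocity quantity, and cross terms, with commutators in $\pa_tU$ being lower order. Where a Fourier-in-time argument is needed, we freeze $U$ on intervals of length $\nu^{-1/3}$ and patch.

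For the nonlinear closure, we estimate $\|e^{\epsilon_0\nu^{1/3}t}f_k\|_{L^2L^2}$ via
\begin{align*}
\|u_l\om_{k-l}\|_{L^2L^2}\le \|u_l\|_{L^\infty L^\infty}\|e^{\epsilon_0\nu^{1/3}t}\om_{k-l}\|_{L^2L^2}\lesssim \nu^{-\frac14}E_lE_{k-l}.
\end{align*}
When $l=0$, we use $\|u_0\|_{L^\infty}\lesssim\|\om_0\|_{L^2}$. The factor $\nu^{-1/2}$ from the linear estimate, combined with $\nu^{-1/4}$ here and suitable $|k|$-weights, should give $\sum_kE_k\lesssim \nu^{-1/2}(\sum_k E_k)^2+\|u^{\mathrm{in}}\|_{H^2}$, which closes at size $\nu^{1/2}$. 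Interactions between the zero mode and nonzero modes in the $\om_0$ equation are handled similarly, using the $L^2L^2$ decay of the nonzero modes.

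The main obstacle is the sharp linear space-time estimate near the boundary. With no-slip conditions, the vorticity has a boundary layer of width $(\nu/|k|)^{1/3}$, which is exactly why $\om_k$ is only controlled with the weight $\sqrt{1-(2y-1)^2}$ in $L^\infty L^2$. The $\pa_y f^{(2)}_k$ forcing must be estimated in a weak norm without losing more than $\nu^{-1/2}$. We would first try duality with the adjoint resolvent, using weak-type bounds of the form $\|\om\|\lesssim\nu^{-1/2}|k|^{-1/2}\|F\|_{H^{-1}}$ together with the pointwise velocity bound $\|u_k\|_{L^\infty}$, derived from the Green function of the corrector.
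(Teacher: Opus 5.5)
\textbf{There is a genuine gap in the nonlinear closure: your divergence form loses $\nu^{1/4}$.}

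With $f=u\omega$, the $x$-divergence part of the forcing is $ik\sum_l u^{(1)}_l\omega_{k-l}$. The bound you write, $\|u_l\|_{L^\infty L^\infty}\|e^{\epsilon_0\nu^{1/3}t}\omega_{k-l}\|_{L^2L^2}\lesssim\nu^{-1/4}E_lE_{k-l}$, combined with the $\nu^{-1/2}$ of the linear estimate gives $E_k\lesssim\nu^{-3/4}\sum_lE_lE_{k-l}+\text{data}$, not $\nu^{-1/2}$. This closes only for $\|u^{\mathrm{in}}\|_{H^2}\lesssim\nu^{3/4}$.

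Nothing in your setup recovers the missing $\nu^{1/4}$ through $|k|$-weights. The loss already occurs at $|k-l|=1$, where the enhanced-dissipation norm $\nu^{1/4}|k-l|^{1/2}\|\omega_{k-l}\|_{L^2L^2}$ gives no extra gain, and $E=\sum_kE_k$ carries no favorable weights.

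Hardy does not rescue this term either. $u^{(1)}_l$ vanishes at the walls, but $\|\partial_yu^{(1)}_l\|_{L^2L^2}$ is comparable to $\|\omega_l\|_{L^2L^2}$, which is again $\nu^{-1/4}$-lossy. Pairing instead with an unweighted $L^\infty L^2$ norm of $\omega_{k-l}$ is no better, since that norm is only available with the same loss. By contrast, your $\partial_y(u^{(2)}\omega)_k$ term is fine: Hardy on $u^{(2)}_l$ plus the weighted $L^\infty L^2$ vorticity norm handles it with no loss.

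\textbf{The missing idea is the velocity divergence formulation used in the paper.} Write $u\cdot\nabla\omega=\operatorname{curl}(u\cdot\nabla u)$, so the forcing becomes $-\partial_y(u\cdot\nabla u^{(1)})_k+ik(u\cdot\nabla u^{(2)})_k$. Two things then happen.
\begin{enumerate}
\item Every $x$-derivative lands on a velocity. The bound $\|u^{(1)}_l\|_{L^\infty L^\infty}|k-l|\|e^{\epsilon_0\nu^{1/3}t}u_{k-l}\|_{L^2L^2}\le E_lE_{k-l}$ then uses the lossless inviscid-damping norm.
\item Every $y$-derivative of a velocity is multiplied by $u^{(2)}$. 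The Hardy bound $\|u^{(2)}_l/\sqrt{1-(2y-1)^2}\|_{L^2L^\infty}\lesssim|l|\|u^{(1)}_l\|_{L^2L^2}$ pairs with $\|\sqrt{1-(2y-1)^2}\partial_yu_{k-l}\|_{L^\infty L^2}\le\|\sqrt{1-(2y-1)^2}\omega_{k-l}\|_{L^\infty L^2}$.
\end{enumerate}
The bad product $\partial_x(u^{(1)}\partial_yu^{(1)})$ hidden in your form never appears. The forcing is $O(\sum_lE_lE_{k-l})$ with no $\nu$-loss, and $E\lesssim\nu^{-1/2}E^2+\text{data}$ closes at $\nu^{1/2}$. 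This is precisely why $\|u_k\|_{L^\infty L^\infty}$ and the weighted $L^\infty L^2$ vorticity norm are built into $E_k$: they are the partners needed in these two products.

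\textbf{A secondary point on the linear side.} The resolvent step you describe as routine is where the paper does new work. Because of the nonlocal term $V''\psi$, the known bound $|c_1|\lesssim\nu^{-7/12}|k|^{-2/3}\|F\|_{H^{-1}_k}$ is too weak against $\|w_1\|_{L^2}\lesssim\nu^{-1/4}(1+|k(V(0)-\lambda)|)^{1/4}$. Weak-type resolvent estimates are needed to reach $(1+|k(\lambda-V(0))|)^{3/4}|c_1|\lesssim\nu^{-1/2}|k|^{-1/2}\|F\|_{H^{-1}_k}$ (and similarly for $c_2$), and hence the sharp $\nu^{-1/2}$ vorticity bound.
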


Let us give some remarks on Theorem \ref{Th: tran thre}.

\begin{itemize}

\item
Theorem \ref{Th: tran thre} establishes the asymptotic stability of monotone shear flows under the no-slip boundary condition.
The asymptotic stability threshold $\beta=\frac{1}{2}$ is expected to be sharp, in view of its consistency with both the stability result \cite{Chen-Li-Wei-Zhang} and the instability result \cite{bgmz-cmp2025}
for the Couette flow.

\item
The present work attains this sharp threshold for monotone shear flows under no-slip boundary conditions, with critical reliance on two key estimates: the sharp inviscid damping estimate $|k|\n{u_{k}}_{L_{t,y}^{2}}$  and the enhanced dissipation estimate $\nu^{\frac{1}{4}}|k|^{\frac{1}{2}}\n{\om_{k}}_{L_{t,y}^{2}}$.

\item The $\n{u(t)}_{L_{x,y}^{\infty}}$ estimate incorporated into the stability norm implies the absolute stability of the monotone shear flows. This pointwise estimate is highly non-trivial, and its proof relies on the sharp $L_{t,y}^{2}$ space-time estimates.

\end{itemize}

The proof of Theorem \ref{Th: tran thre}, while built upon the framework introduced in \cite{Chen-Li-Wei-Zhang}, employs a distinct strategy based on a divergence formulation for the nonlinear terms.
This approach enables a sharp application of the key space-time estimates established in this work, and offers a new and effective pathway to tackle nonlinear stability problems,
with broad potential applicability to a variety of problems in different settings. We conclude this section by outlining the main ideas used to overcome the key difficulties.

\vspace{0.2cm}

\noindent\textbf{Difficulties and key ingredients.}

\vspace{0.1cm}

In the case of $\T\times \R$, both inviscid-damping and enhanced-dissipation estimates for the Couette flow can be elegantly derived through Fourier-multiplier techniques \cite{Bedrossian-Germain-Masmoudi-3,Bedrossian-Masmoudi-Vicol,Masmoudi-Zhao, Wei-Zhang-2}.
However, this approach is not directly applicable in the channel domain $\T\times [-1,1]$. Especially for the no-slip boundary condition, beyond difficulties already present in the Navier-slip case, an additional challenge arises due to boundary layer effects. A systematic framework to address these problems was developed in \cite{Chen-Li-Wei-Zhang}, which successfully established a nonlinear stability threshold of
$\beta \leq \frac{1}{2}$.
Following this route, the linear stability analysis for general monotone shear
 flows has been well established in \cite{Chen-Wei-Zhang1}, which serves as a key step towards the full nonlinear analysis of the transition threshold problem. In another direction, near-Couette flows have also been well-studied, with a sharp result (up to a logarithmic loss) recently established in \cite{Bedrossian-He-Iyer-Li-Wang}.

Compared with the Couette flow studied in \cite{Chen-Li-Wei-Zhang}, the analysis for general monotone shear profiles introduces substantial new difficulties. To overcome them, the nonlinear stability analysis requires new ingredients, which are stated below.

\begin{enumerate}[$(1)$]
\item Variable-coefficient problems.

In the context of linear stability analysis, the profile of the shear flows $U(t,y)$ plays a central role in determining the stability properties of the linearized system. For steady monotone shear flows $U(y)$, under an additional spectral assumption,
the linearized operator has been thoroughly studied in \cite{Chen-Wei-Zhang1}. However, in the time-dependent setting, the problem becomes more challenging due to the presence of a linearized operator with time-varying coefficients, which complicates the derivation of space-time estimates.
An effective strategy to address such variable-coefficient problems is the frozen-time approach. Nevertheless, this method faces additional difficulties. First, the control constant may vary per each time interval, leading to an accumulation of blow-up quantities. Second, the requisite spectral condition for the stability analysis must be satisfied uniformly in time.

Here, to deal with the two difficulties,
for monotone shear flows, two key observations facilitate the analysis. First, under the Neumann boundary conditions, a conservation property ensures a uniform lower bound on the velocity gradient
\begin{align}
0<c_{0}\leq \inf_{t\in[0,\infty)}\inf_{y\in[0,1]}\pa_{y}U.
\end{align}
Second, an instantaneous positivity property holds for the second derivative $\pa_{y}^{2}U$. Specifically,
\begin{align}
&\text{if $\pa_{y}^{2}U^{\mathrm{in}}\geq 0$, then $\pa_{y}^{2}U\equiv0$, or $\pa_{y}^{2}U(t,y)>0$, $\f t>0, y\in (0,1)$}, \\
&\text{if $\pa_{y}^{2}U^{\mathrm{in}}\leq 0$, then $\pa_{y}^{2}U\equiv0$, or $\pa_{y}^{2}U(t,y)<0$, $\f t>0, y\in (0,1)$}.
\end{align}
This property implies that even if embedding eigenvalue is present at the initial time, it is excluded for all positive time.

\item Sharp resolvent estimates with no-slip boundary condition.

The main difficulty in refining resolvent estimates for the vorticity is the nonlocal term $U''\pa_{x}\Delta^{-1}w$.
To beat it, we develop weak-type resolvent estimates to sharpen the coefficient bounds. As stated in Lemma \ref{lemma: weak resolvent estimate}, we prove two weak-type resolvent estimates according to the region the spectrum localized:
\begin{align*}
|\lan w_{Na},f\ran|
\lesssim &\nu^{-\frac14}|k|^{-\frac34}\lrv{f(j)}\n{F}_{H_{k}^{-1}}\lrs{|U(j)-\la| + \delta_{0}}^{-\frac34} \\
&+\nu^{-\frac12}|k|^{-\frac12}\n{F}_{H_{k}^{-1}} \lrs{|k|^{-1}\|\pa_y f\|_{L^\infty}+\|f\|_{L^\infty}},\notag
\end{align*}
and
\begin{align*}
|\lan w_{Na},f\ran|
\lesssim& |k|^{-1}\|F\|_{H_{k}^{-1}}\Big[ \nu^{-\frac14}|k||f(j)|(|k(U(j)-\la)|+1)^{-\frac34}+\Big\|\pa_{y}\lrs{f\frac{\chi_{\delta_{1}}}{U-\la}}\Big\|_{L^{2}}\\
&+(\nu^{-\frac13}|k|^{\frac13}+\nu^{-\frac12} |k|^{-1})\Big\|f\frac{\chi_{\delta_{1}}}{U-\la}\Big\|_{L^2} +\nu^{-\frac12}\n{(\pa_{y},k)f}_{L^{2}(E_{\delta_{1}})}\Big].\notag
\end{align*}

\item Weighted space-time $L^{\infty} L^{2}$ estimate for the vorticity and $L^{\infty}L^{\infty}$ estimate for the velocity.

Building on the sharp resolvent estimates for vorticity, we further establish sharp space-time $L^2L^2$ estimate for the vorticity. For the nonlinear stability analysis,
 another key point here is to prove weighted space-time $L^{\infty} L^{2}$ estimate for the vorticity and $L^{\infty}L^{\infty}$ estimate for the velocity, that is,
$$\|e^{\epsilon_0 \nu^{\frac13}t} \sqrt{1-(2y-1)^2} \om_k\|_{L^\infty L^2},\quad \|e^{\epsilon_0 \nu^{\frac13}t} u_k\|_{L^\infty L^\infty}.$$
Such estimates are already nontrivial even in the simpler case of the Couette flow $U(y)=y$, as seen in \cite{Chen-Li-Wei-Zhang}. Here, the main difficulty also arises from the nonlocal term $U''\pa_{x}\Delta^{-1}w$. By employing a different strategy and incorporating refined technical arguments, we are nevertheless able to establish these desired weighted space-time and pointwise estimates.

\item Nonlinear stability analysis through a divergence formulation.

Rather than employing the vorticity representation for the nonlinear terms, we adopt a divergence formulation given by
\begin{align*}
&\pa_t \om_k-\nu (\pa^2_y-k^2) \om_k+ikU(t,y)\om_k-ik\pa^2_yU(t,y)\psi_k
:=-\pa_y (f^{1,1}_k+f^{2,1}_k)-ik(f^{1,2}_k+f^{2,2}_k),
\end{align*}
where the nonlinear terms are given by the velocity components
\begin{align*}
&f^{1,1}_k=i\sum_{l\in\Z} u^{(1)}_l(t,y)(k-l)u^{(1)}_{k-l}(t,y), \quad f^{1,2}_k=i\sum_{l\in\Z} u^{(1)}_l(t,y)(k-l)u^{(2)}_{k-l}(t,y),\\
&f^{2,1}_k=\sum_{l\in \Z} u^{(2)}_l(t,y) \pa_y u^{(1)}_{k-l}(t,y), \qquad\quad f^{2,2}_k=\sum_{l\in \Z} u^{(2)}_l(t,y) \pa_y u^{(2)}_{k-l}(t,y).
\end{align*}
The divergence formulation reduces the dependence on vorticity estimates and provides a new route for handling the nonlinear stability problem. This has proven effective in the study of symmetric shear flows, as demonstrated in \cite{Chen-Li-Shen-Zhang}.
It is particularly well-suited for the case of monotone shear flows,
allowing for a sharp application of weighted space-time $L^{\infty}L^{2}$ estimate and $L^\infty L^\infty$ estimate we obtained. Also,
it provides an alternative proof to achieve the stability threshold for the 2-D Couette flow as in \cite{Chen-Li-Wei-Zhang}.
\end{enumerate}

\medskip

\noindent{\bf Notations.}
Throughout this paper, $C$ denotes a general constant independent of $\nu, k, \la$, and it may vary from line to line. The notation $A\lesssim B$ means $A\leq CB$.
In the following, we will omit the subscript $k$ for $\om, w, \psi, \phi, F$ while still keep the dependence on $k$ in the actual estimates.

\section{Resolvent estimates of the linearized operator}\label{sec: reso esti}

This section is devoted to resolvent estimates for steady shear flows, which are central to the frozen-coefficient method. Specifically, we consider the Orr-Sommerfeld equation with the no-slip boundary condition
\begin{equation}\label{equ:non-slip,OS}
\left\{
\begin{aligned}
&-\nu(\pa^2_y-k^2)w +ik(V-\la)w -ik V''\phi+o(\nu,k)w=F ,\\
&\phi=(\pa^2_y-k^2)^{-1}w, \quad \phi(0)=\phi(1)=\pa_{y}\phi(0)=\pa_{y}\phi(1)=0,\\
\end{aligned}
\right.
\end{equation}
where $\la\in \R$, $|o(\nu,k)|\ll (\nu k^{2})^{\frac{1}{3}}$,
and the subscript $k$ in $w_{k}$ is omitted for brevity.
Here, the steady flow $V(y)$ satisfies
\begin{align}\label{cond V}
V(y)\in H^4(0,1), \quad \inf_{y\in(0,1)}\pa_{y}V\geq c_{0}>0,\,\,\, V'' >0 \,\,\text{or}\,\, V''<0.
\end{align}
The convexity condition in \eqref{cond V} guarantees that the Rayleigh operator
$$R_{k}=:(\pa^2_y-k^2)^{-1}(V(y)(\pa^2_y-k^2)-V''(y))$$
has no embedding eigenvalues or eigenvalues.
This spectral property is crucial for the linear stability analysis carried out in \cite{Chen-Wei-Zhang1}. See also \cite{Wei-Zhang-Zhao} for more details.

As a preliminary step, we decompose the solution to \eqref{equ:non-slip,OS} into
\begin{align}\label{equ:decomposition,wna,w1,w2}
w=w_{Na}+c_{1}w_{1}+c_{2}w_{2},
\end{align}
where $w_{Na}$ solves the inhomogeneous OS equation with the Navier-slip boundary condition
\begin{equation}\label{equ:Navier-slip,OS}
\left\{
\begin{aligned}
&-\nu(\pa^2_y-k^2)w_{Na} +ik(V-\la)w_{Na} -ik V''\phi_{Na}+o(\nu,k)w_{Na} =F ,\\
&\phi_{Na}=(\pa^2_y-k^2)^{-1}w_{Na}, \quad \phi_{Na}(0)=\phi_{Na}(1)=w_{Na}(0)=w_{Na}(1)=0,\\
\end{aligned}
\right.
\end{equation}
and $w_1$, $w_2$ solve the following homogeneous OS equations
\begin{equation}\label{equ: psi1}
\left\{
\begin{aligned}
&-\nu(\pa^2_y-k^2)w_{1}+i k(V-\la)w_{1}-ik V''\phi_{1}+o(\nu,k)w_{1}=0,\\
&\phi_{1}=(\pa^2_y-k^2)^{-1}w_{1},\quad \phi_{1}(0)=\phi_{1}(1)=0,\quad \pa_y\phi_{1}(0)=0, \quad \pa_y\phi_{1}( 1)=1,
\end{aligned}
\right.
\end{equation}
and
\begin{equation}\label{equ: psi2}
\left\{
\begin{aligned}
&-\nu(\pa^2_y-k^2)w_{2}+i k(V-\la)w_{2}-ik V''\phi_{2}+o(\nu,k)w_{2}=0,\\
&\phi_{2}=(\pa^2_y-k^2)^{-1}w_{2},\quad \phi_{2}(0)=\phi_{2}(1)=0,\quad \pa_y\phi_{2}(0)=1,\quad \pa_y\phi_{2}(1)=0.
\end{aligned}
\right.
\end{equation}
Using the boundary condition in \eqref{equ: psi1} and \eqref{equ: psi2}, the coefficients are given by
\begin{equation}\label{def: c1c2}
\begin{aligned}
c_{1}=\int^1_{0}\frac{\sinh  k(1-y)}{\sinh k}w_{Na}(y)dy,\qquad c_{2}=\int^1_{0}\frac{\sinh (ky)}{\sinh k}w_{Na}(y)dy.
\end{aligned}
\end{equation}

\subsection{Resolvent estimates with Naiver-slip boundary condition}
Resolvent estimates for the velocity $u=(\pa_{y},-ik)\phi$, boundary layer correctors estimates, and the coefficient estimates, are well-established in the literature \cite{Chen-Li-Wei-Zhang,Chen-Wei-Zhang1}.
For clarity and completeness, we summarize in Lemma \ref{pro: reso esti FL2} the key results from \cite[Proposition 3.13, Lemmas 4.2 and 4.4, Proposition 5.1]{Chen-Wei-Zhang1} and \cite[Lemma 5.2]{Chen-Li-Wei-Zhang}.
\begin{lemma}\label{pro: reso esti FL2}
Let $(w,\phi)$ be the solution to \eqref{equ:non-slip,OS}. Then
there exist constants $\nu_0>0$ and $\epsilon_1>0$ such that, for $\nu\leq \nu_0$, $\nu k^2\leq \|V'\|_{L^\infty}$ and $|o(\nu,k)|\leq \epsilon_{1}(\nu k^{2})^{\frac{1}{3}}$, the following estimates hold.
  \begin{enumerate}[$(1)$]
  \item Resolvent estimates with Navier-slip boundary condition:
 \begin{align}
 \nu^{\frac{1}{6}}|k|^{\frac{4}{3}}\|u_{Na}\|_{L^2}+\nu^{\frac{1}{3}} |k|^{\frac{2}{3}}\|w_{Na}\|_{L^2}+\nu^{\frac23}|k|^{\frac13}\|(\pa_y,k)w_{Na}\|_{L^2}\lesssim&  \|F\|_{L^2},\label{esti: wL2}\\
\nu^{\frac{1}{6}}|k|^{\frac43}\|w_{Na}\|_{L^2}+\nu^{\frac{1}{12}}|k|^{\frac{5}{3}}\|w_{Na}\|_{L^1}+ |k|^2\|(V(y)-\la)w_{Na}\|_{L^2} \lesssim & \|F\|_{H_{k}^{1}}, \label{esti: wH1}\\
\nu^{\frac{1}{2}}  |k|\left\|u_{Na}\right\|_{L^2}+\nu^{\frac{2}{3}} |k|^{\frac{1}{3}}\|w_{Na}\|_{L^2}+\nu\|(\pa_y,k) w_{Na}\|_{L^2}
 \lesssim & \|F\|_{H_{k}^{-1}}, \label{esti: wH-1}
\end{align}
where the velocity $u_{Na}=(\pa_y\phi_{Na},-ik\phi_{Na})$, and the norms are given by
\begin{align*}
\|F\|_{H_{k}^{1}}=\|(\pa_y,k) F\|_{L^2},\quad \|F\|_{H_{k}^{-1}}=\sup \{|\lan F,\phi\ran|: \phi\in H^1_0(0,1), \, \|\phi\|_{H_{k}^{1}}=1\}.
\end{align*}
\item Estimates for the boundary layer correctors:
\begin{align}
&\|w_1\|_{L^2}\lesssim \nu^{-\frac14}(1+|k(V(0)-\la)|)^{\frac14},\label{w1L2}\\
&\|w_2\|_{L^2}\lesssim \nu^{-\frac14}(1+|k(V(1)-\la)|)^{\frac14},\label{w2L2}\\
&\|\rho^{\frac12 }_kw_1\|_{L^2}+\|\rho^{\frac12}_k w_2\|_{L^2}\lesssim L^{\frac12}, \label{weightL2}
\end{align}
where $L=\nu^{-\frac13}|k|^{\frac13}$ and the weight function is given by
\begin{equation}\label{equ:def,rho,k}
\rho_{k}(y)=\left\{
\begin{aligned}
&Ly, \qquad\qquad\, y\in [0,L^{-1}], \\
&1, \qquad\qquad\quad y\in [L^{-1}, 1-L^{-1}],\\
&L(1-y), \qquad y\in [1-L^{-1}, 1].
\end{aligned}
\right.
\end{equation}
\item Estimates for the coefficients:
\begin{align}
|c_1|+|c_2|\lesssim& \nu^{-\frac{1}{4}}|k|^{-1}\|F\|_{L^2},\label{c1c2L2}\\
|c_1|+|c_2|\lesssim& \nu^{-\frac{1}{12}}|k|^{-\frac53}\|F\|_{H^{1}_{k}},\label{c1c2H1}\\
|c_1|+|c_2|\lesssim& \nu^{-\frac{7}{12}}|k|^{-\frac23}\|F\|_{H^{-1}_{k}}.\label{c1c2H-1}
\end{align}
\end{enumerate}
\end{lemma}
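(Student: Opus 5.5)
This lemma collects results already proved in \cite{Chen-Wei-Zhang1} and \cite{Chen-Li-Wei-Zhang}, so the plan is to check that the hypotheses there match ours and to transcribe the bounds. The standing assumptions \eqref{cond V} are strict monotonicity $V'\ge c_0$, $H^4$ regularity, and a sign condition on $V''$. Under these, the Rayleigh operator $R_k$ has no eigenvalues and no embedded eigenvalues. That is the spectral hypothesis required in \cite{Chen-Wei-Zhang1}, and the restrictions $\nu k^2\le\|V'\|_{L^\infty}$ and $|o(\nu,k)|\le\epsilon_1(\nu k^2)^{1/3}$ are the regime in which their estimates hold. We first note that the perturbation $o(\nu,k)w$ can be absorbed. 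Its size is smaller than the enhanced-dissipation rate $(\nu k^2)^{1/3}$ by the factor $\epsilon_1$. Hence each energy or resolvent inequality keeps its form after moving $o(\nu,k)w$ to the left side, with a harmless change of constants once $\epsilon_1$ is small.

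For part (1) we invoke \cite[Proposition 3.13]{Chen-Wei-Zhang1}, i.e.\ the Navier-slip resolvent bounds with data in $L^2$, $H^1_k$ and $H^{-1}_k$. Roughly, they are proved in three steps:
\begin{itemize}
\item first one bounds the Rayleigh part via the compactness/no-eigenvalue argument;
\item next one takes the real and imaginary parts of the energy identity $\langle \text{OS}\,w, w\rangle$, and of the same identity against the test function $(V-\lambda)^{-1}$ localized away from the critical point;
\item finally one interpolates between these, which gives the scalings $\nu^{1/3}|k|^{2/3}$, $\nu^{1/6}|k|^{4/3}$, and so on.
\end{itemize}
We only need to observe that the nonlocal term $ikV''\phi$ is treated there uniformly in $\lambda\in\R$.

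For part (2), the boundary-layer corrector bounds \eqref{w1L2}--\eqref{weightL2} come from \cite[Lemmas 4.2, 4.4]{Chen-Wei-Zhang1}. These are built from the Airy-type approximate solutions of $-\nu(\partial_y^2-k^2)+ik(V-\lambda)$ near each wall, with Langer-type variable $L(V(y)-\lambda)/V'$, corrected through the Navier-slip resolvent. The weighted bound \eqref{weightL2} with $\rho_k$ is \cite[Lemma 5.2]{Chen-Li-Wei-Zhang}, adapted to general $V$. The adaptation uses only that $V'\sim1$, so the boundary-layer width stays $L^{-1}$ and the decay is $e^{-cLy}$.

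For part (3), we write $c_1,c_2$ through \eqref{def: c1c2} as pairings $\langle w_{Na},\sinh k(1-y)/\sinh k\rangle$, etc., and follow \cite[Proposition 5.1]{Chen-Wei-Zhang1}. The main obstacle is here: naive use of \eqref{esti: wL2}--\eqref{esti: wH-1} loses powers of $\nu$. Instead one tests the adjoint problem against the kernels. Concretely, one solves the adjoint Navier-slip equation with right-hand side $\sinh k(1-y)/\sinh k$, and applies the part (1) bounds to that adjoint solution. Since the kernel is smooth, it contributes the favorable $H^1_k$-type gain, and duality between the $L^2$, $H^1_k$ and $H^{-1}_k$ data norms produces \eqref{c1c2L2}--\eqref{c1c2H-1}. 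Because the present paper's setting is identical to that of the cited works, we expect no new argument to be needed beyond verifying these parameter regimes.
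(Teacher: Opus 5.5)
Your proposal matches the paper: the lemma is not proved there but simply collected from \cite[Proposition 3.13, Lemmas 4.2 and 4.4, Proposition 5.1]{Chen-Wei-Zhang1} and \cite[Lemma 5.2]{Chen-Li-Wei-Zhang}, so checking that the hypotheses fall within those works and transcribing the bounds is exactly what is done; the hypotheses are monotonicity, the sign of $V''$ ruling out (embedded) eigenvalues of $R_k$, $\nu k^2\le\|V'\|_{L^\infty}$, and $|o(\nu,k)|\le\epsilon_1(\nu k^2)^{1/3}$. Treat your adjoint sketch for part (3) only as a heuristic pointer to the reference, not as an argument: the $L^2$-adjoint of the Navier-slip OS operator has the nonlocal term $ik(\pa_y^2-k^2)^{-1}(V''\,\cdot)$ instead of $ikV''(\pa_y^2-k^2)^{-1}$, so part (1) does not apply to it verbatim, and even formally the $L^2$-data bound for the adjoint solution $g$ with data $\sinh k(1-y)/\sinh k$ gives only $\|g\|_{H^1_k}\lesssim\nu^{-2/3}|k|^{-5/6}$, which misses \eqref{c1c2H-1} by a factor $(\nu k^2)^{-1/12}$.
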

Notably, the coefficient estimates for $F\in H_{k}^{-1}$ in hand, combined with the boundary layer correctors estimates, are insufficient to yield results that align with the result in the simple case of Couette flow.
This is mainly due to the presence of the extra nonlocal term $V''\pa_{x}\Delta^{-1}\phi$.
In contrast, this nonlocal term vanishes for Couette flow where $V(y)=y$.
To bridge this gap, the next subsection is devoted to deriving refined estimates for the coefficients $c_{1}$ and $c_{2}$, thereby establishing resolvent estimates for the vorticity, that match the known optimal results in \cite{Chen-Li-Wei-Zhang}.

\subsection{Resolvent estimates with no-slip boundary condition}\label{sec: Non slip}
 In this subsection, we focus on deriving such sharp resolvent estimates for the vorticity, which are the main results presented below.
 \begin{proposition}\label{lemma:non-slip boundary,resolvent}
Let $(w,\phi)$ be the solution to \eqref{equ:non-slip,OS}. Then
there exist constants $\nu_0>0$ and $\epsilon_1>0$ such that, for $\nu\leq \nu_0$, $\nu k^2\leq \|V'\|_{L^\infty}$ and $|o(\nu,k)|\leq \epsilon_{1}(\nu k^{2})^{\frac{1}{3}}$, we have
\begin{align}
\nu^{\frac14}|k|^{\frac12}\|w\|_{L^2}+\nu^{\frac16}|k|^{\frac13}\|\rho^{\frac12}_k w\|_{L^2}\lesssim & \nu^{-\frac{1}{6}}|k|^{-\frac13}\|F\|_{L^2}, \label{esti: u,wL2FL^2}\\
\nu^{\frac14}|k|^{\frac12}\|w\|_{L^2}+\nu^{\frac16}|k|^{\frac13}\|\rho^{\frac12}_k w\|_{L^2} \lesssim & \nu^{-\frac{1}{12}}|k|^{-\frac76}\|F\|_{H_{k}^{1}}, \label{esti: uwL2FH1}\\
\nu^{\frac14}|k|^{\frac12}\|w\|_{L^2}+\nu^{\frac16}|k|^{\frac13}\|\rho^{\frac12}_k w\|_{L^2}\lesssim & \nu^{-\frac12}\|F\|_{H_{k}^{-1}}. \label{esti: u,wL2FH-1}
\end{align}
\end{proposition}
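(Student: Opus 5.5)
We use the decomposition \eqref{equ:decomposition,wna,w1,w2}, $w=w_{Na}+c_1w_1+c_2w_2$, and bound each piece. This gives
\[
\|w\|_{L^2}\le \|w_{Na}\|_{L^2}+|c_1|\|w_1\|_{L^2}+|c_2|\|w_2\|_{L^2},
\]
and the same with the weight $\rho_k^{1/2}$.

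\textbf{The Navier-slip part.} Lemma \ref{pro: reso esti FL2} already controls $w_{Na}$ with room to spare.
\begin{itemize}
\item For $F\in L^2$, \eqref{esti: wL2} gives $\nu^{\frac14}|k|^{\frac12}\|w_{Na}\|_{L^2}\lesssim \nu^{-\frac1{12}}|k|^{-\frac16}\|F\|_{L^2}$. This is stronger than needed, since $\nu k^2\lesssim1$ implies $\nu^{-\frac1{12}}|k|^{-\frac16}\le\nu^{-\frac16}|k|^{-\frac13}$ up to constants.
\item For $F\in H^1_k$ and $F\in H^{-1}_k$, we argue the same way using \eqref{esti: wH1} and \eqref{esti: wH-1}.
\item Since $\rho_k\le1$, the weighted norm of $w_{Na}$ is handled identically.
\end{itemize}

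\textbf{The boundary-layer part.} Here the coefficient bounds \eqref{c1c2L2}--\eqref{c1c2H-1} are not sharp enough. Multiplied by $\|w_j\|_{L^2}\lesssim \nu^{-\frac14}(1+|k(V(j)-\la)|)^{\frac14}$, they lose exactly the factor discussed after Lemma \ref{pro: reso esti FL2}. For instance, in the $H^{-1}$ case one would need $|c_j|\nu^{-\frac14}\lesssim \nu^{-\frac34}|k|^{-\frac12}$, whereas \eqref{c1c2H-1} only gives $\nu^{-\frac56}|k|^{-\frac23}$.

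We therefore write $c_j=\lan w_{Na},f_j\rangle$ with $f_1=\frac{\sinh k(1-y)}{\sinh k}$ and $f_2=\frac{\sinh ky}{\sinh k}$, and derive weak-type bounds for $|\lan w_{Na},f\ran|$ by duality. Let $\Phi$ solve the adjoint Navier-slip Orr--Sommerfeld problem with right-hand side $f$, i.e.\ the problem with $ik(V-\la)$ replaced by $-ik(V-\la)$ and the nonlocal term transposed. Then $\lan w_{Na},f\ran=\lan F,\Phi\ran$, which gives
\[
|\lan w_{Na},f\ran|\le \|F\|_{H^{-1}_k}\|\Phi\|_{H^1_k},
\]
and similarly with the $L^2$ and $H^{-1}_k$ pairings.

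\textbf{Estimating the adjoint solution.} We split according to the distance of $\la$ from $V(j)$.
\begin{itemize}
\item When $|V(j)-\la|\gtrsim\delta_0$, away from the critical layer, we decompose $f=f(j)e^{-|k|\mathrm{dist}(y,j)}$ plus a remainder vanishing at the wall. The first piece produces a boundary-layer profile of width $(\nu/|k(V(j)-\la)|)^{1/2}$, whose $H^1_k$ norm yields the factor $\nu^{-\frac14}|k|^{-\frac34}(|V(j)-\la|+\delta_0)^{-\frac34}$. The remainder is controlled by the crude $H^{1}$ estimate \eqref{esti: wH1} for the adjoint problem, which gives the $\nu^{-\frac12}|k|^{-\frac12}$ term.
\item Near the critical layer, we use the inviscid approximation $\Phi\approx f\chi_{\delta_1}/(V-\la)$ from the Rayleigh operator. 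Its non-degeneracy is guaranteed by the convexity in \eqref{cond V}, which excludes embedded eigenvalues and gives the uniform bound. The viscous correction is handled on $E_{\delta_1}$.
\end{itemize}

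\textbf{Assembling.} Since $\|f_j\|_{L^\infty}\lesssim1$, $\|\pa_yf_j\|\lesssim|k|$, and $f_j(j)=1$, we obtain
\[
|c_j|\lesssim \nu^{-\frac12}|k|^{-\frac12}(1+|k(V(j)-\la)|)^{-\frac14}\|F\|_{H^{-1}_k}\cdot(\text{lower order}).
\]
The decay $(1+|k(V(j)-\la)|)^{-\frac34}$ absorbs the growth $(1+|k(V(j)-\la)|)^{\frac14}$ in \eqref{w1L2}--\eqref{w2L2}, so
\[
|c_j|\|w_j\|_{L^2}\lesssim \nu^{-\frac34}|k|^{-\frac12}\|F\|_{H^{-1}_k},
\]
which is \eqref{esti: u,wL2FH-1}. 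For the weighted norm we combine the same coefficient bound with \eqref{weightL2}, using $L^{\frac12}=\nu^{-\frac16}|k|^{\frac16}$.

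For \eqref{esti: u,wL2FL^2} and \eqref{esti: uwL2FH1} we repeat the argument with $\|F\|_{L^2}$ or $\|F\|_{H^1_k}$ in the duality. Alternatively, we interpolate, using $\|F\|_{H^{-1}_k}\le|k|^{-1}\|F\|_{L^2}$ in the regime $|k(V(j)-\la)|\gtrsim1$ and \eqref{c1c2L2} directly otherwise.

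\textbf{Main obstacle.} The hardest step is the sharp adjoint estimate near the critical layer, in particular controlling the nonlocal term $V''\phi$ in the dual problem uniformly in $\la$ without losing the $\frac34$ power.
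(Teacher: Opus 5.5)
\textbf{Verdict: genuine gap.} Your outer structure matches the paper, and several of your diagnoses are correct:
\begin{itemize}
\item the decomposition $w=w_{Na}+c_1w_1+c_2w_2$ and the bookkeeping for $w_{Na}$;
\item the observation that \eqref{c1c2H-1} loses a factor $(\nu k^2)^{-1/12}$;
\item the observation that \eqref{c1c2H-1} also lacks decay in $|k(V(j)-\lambda)|$.
\end{itemize}
But the step that carries the whole proposition is asserted, not proved. That step is a bound such as $|c_j|\lesssim\nu^{-1/2}|k|^{-1/2}(1+|k(V(j)-\lambda)|)^{-3/4}\|F\|_{H^{-1}_k}$, together with its $L^2$ and $H^1_k$ analogues.

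By duality this is equivalent to a sharp bound on $\|\Phi\|_{H^1_k}$ for the adjoint problem. Your wall-layer scaling heuristic is the right one, but none of the tools you invoke yields the bound:
\begin{itemize}
\item Lemma \ref{pro: reso esti FL2} concerns the primal operator, whose nonlocal term is $V''(\partial_y^2-k^2)^{-1}w$. The adjoint carries $(\partial_y^2-k^2)^{-1}(V''\Phi)$ instead.
\item Even granting the lemma for the adjoint, \eqref{esti: wH1} bounds $\|w\|_{L^2}$, $\|w\|_{L^1}$ and $\|(V-\lambda)w\|_{L^2}$, not an $H^1_k$ norm of the solution.
\item The $H^1_k$ bound available from \eqref{esti: wL2} is $\nu^{-2/3}|k|^{-1/3}\|f\|_{L^2}\lesssim\nu^{-2/3}|k|^{-5/6}$. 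This overshoots the target $\nu^{-1/2}|k|^{-1/2}$ by $(\nu k^2)^{-1/6}$.
\end{itemize}

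The loss is structural, not a matter of bookkeeping. The $\nu^{-1/2}|k|^{-1/2}$ term comes from the interior critical layer $\{|V-\lambda|\lesssim\delta_0\}$. There $\Phi\sim f/(|k|\delta_0)$, so $\|\Phi\|_{H^1_k}\sim|f|\,|k|^{-1}\delta_0^{-3/2}=|f|\,\nu^{-1/2}|k|^{-1/2}$. This layer is present in your ``remainder'' too, so your first bullet does not close.

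Your second bullet replaces $\Phi$ by $f\chi_{\delta_1}/(V-\lambda)$, which drops the nonlocal term entirely. The claim that non-degeneracy of the Rayleigh operator ``gives the uniform bound'' would need a limiting-absorption estimate for the adjoint Rayleigh problem, uniform in $\lambda$ and up to the walls, in exactly these norms. That is the hard step you flag yourself, and it is left open.

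The fallback for $F\in L^2$ also fails, on both sides of the split:
\begin{itemize}
\item Even when $|k(V(j)-\lambda)|\lesssim 1$, \eqref{c1c2L2} misses the needed $\nu^{-1/6}|k|^{-5/6}$ by $(\nu k^2)^{-1/12}$.
\item Using $\|F\|_{H^{-1}_k}\le |k|^{-1}\|F\|_{L^2}$, even with $-\frac34$ decay, only suffices once $1+|k(V(j)-\lambda)|\gtrsim(\nu k^2)^{-2/3}$, not $\gtrsim 1$.
\end{itemize}

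\textbf{The missing idea} is to use your approximate adjoint solution as a \emph{test function} rather than solving the adjoint problem. The paper tests the primal Navier-slip equation with $\varphi=f\chi_\delta/(V-\lambda)$. It makes $\varphi$ admissible in $H^1_0$ by subtracting $\varphi(j)\eta_{\delta_*}$, with $\eta_{\delta_*}$ supported in a wall strip of width $\delta_*$. Every error term is then controlled by the known primal bound \eqref{esti: wH-1}:
\begin{itemize}
\item \emph{Wall value.} It produces the factor $(|V(j)-\lambda|+\delta_0)^{-3/4}$. This uses $\|w_{Na}\|_{L^1(\mathrm{strip})}\le\delta_*^{3/2}\|\partial_y w_{Na}\|_{L^2}$, valid because $w_{Na}$ vanishes at the wall, with $\delta_*$ chosen to balance against the cost $\delta_*^{-1/2}$ of $\eta_{\delta_*}'$.
\item \emph{Nonlocal term.} It stays on the primal side as $\langle V''\phi_{Na},\varphi\rangle$ and is controlled by the sharp velocity bound $\|(\partial_y,k)\phi_{Na}\|_{L^2}\lesssim\nu^{-1/2}|k|^{-1}\|F\|_{H^{-1}_k}$. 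Near the wall one subtracts the value at the critical point $y_\lambda$. The leftover integral $\int\chi_\delta/(V-\lambda)$ is only logarithmic and is compensated by $|\phi_{Na}(y_\lambda)|\le|y_\lambda-j|^{1/2}\|\partial_y\phi_{Na}\|_{L^2}$. Away from the wall, $\|\phi_{Na}\|_{L^2}\|\varphi\|_{L^2}$ suffices.
\item \emph{Layer $E_\delta$.} It is handled by $\|w_{Na}\|_{L^2}\|\chi^c_\delta\|_{L^2}$, or by moving derivatives onto $\phi_{Na}$.
\end{itemize}
Thus the spectral hypothesis enters only through Lemma \ref{pro: reso esti FL2}, and no adjoint or Rayleigh analysis is needed. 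This gives Lemmas \ref{lemma: weak resolvent estimate}--\ref{lemma: c1,c2 FL2}, after which the proposition is the bookkeeping you already did. For $F\in H^1_k$ one simply splits $\langle w_{Na},f_j\rangle$ and uses $\|(V-\lambda)w_{Na}\|_{L^2}$ and $\|w_{Na}\|_{L^1}$ from \eqref{esti: wH1}, together with the exponential decay of $f_j$.
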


The proof of Proposition \ref{lemma:non-slip boundary,resolvent} crucially relies on the refined estimates for the coefficients. Prior to that, we first derive weak-type resolvent estimates in Lemma \ref{lemma: weak resolvent estimate}, which play a key role in sharpening the coefficient estimates presented later in Lemma \ref{lemma: c1,c2 FL2}.

A key aspect of weak-type resolvent estimates is the analysis in the region where $V(y)\sim \la$. To localize the behavior in this region, we introduce a $C^{1}$-smooth cutoff function $\chi_{\delta}(y)$:
\begin{equation}\label{fun: chi1}
\chi_{\delta}(y)=\left\{
\begin{aligned}
&\quad 1, \qquad  &y\in[0,1]\setminus E_{\delta},\\
&\frac{2(V(y)-\la)^2 }{\delta^2}-\frac{(V(y)-\la)^{4}}{\delta^4}, \quad &y\in E_{\delta},
\end{aligned}
\right.
\end{equation}
where $E_{\delta}:=\lr{y\in [0,1]:V(y)\in(\la-\delta,\la+\delta)}$. This cutoff function satisfies the following properties:
\begin{align}\label{esti: chi}
\frac{\chi_{\delta}}{V-\la}\in C^{1},\quad \bbn{\frac{\chi_{\delta}}{V-\la}}_{L^{2}}\lesssim \delta^{-\frac{1}{2}},\quad
\bbn{\frac{\chi_{\delta}}{V-\la}}_{L^{\infty}}\lesssim \delta^{-1},\quad \bbn{\pa_{y}\lrs{\frac{\chi_{\delta}}{V-\la}}}_{L^{2}}\lesssim \delta^{-\frac{3}{2}}.
\end{align}
 We also define the complementary cutoff function $\chi_{\delta}^{c}(y) = 1 - \chi_{\delta}(y)$.

\begin{lemma}[Weak-type resolvent estimates]\label{lemma: weak resolvent estimate}
Let $(w_{Na},\phi_{Na})$ be the solution to \eqref{equ:Navier-slip,OS}, and $f\in H^{1}(0,1)$ satisfy $f(1-j)=0$ for either $j=0$ or $j=1$.
Define
\begin{equation*}
A_{F}=\left\{
\begin{aligned}
&\nu^{-\frac16}|k|^{-\frac56}\|F\|_{L^2},\quad \text{if}\quad  F\in L^2,\\
&\nu^{-\frac12}|k|^{-\frac12}\|F\|_{H^{-1}_{k}},\quad \text{if}\quad F\in H^{-1}_{k}.
\end{aligned}
\right.
\end{equation*}
Then there exist constants $\nu_0>0$ and $\epsilon_1>0$ such that, for $\nu\leq \nu_0$, $\nu k^2\leq \|V'\|_{L^\infty}$ and $|o(\nu,k)|\leq \epsilon_{1}(\nu k^{2})^{\frac{1}{3}}$, the following estimates hold.
\begin{enumerate}[$(1)$]
\item If $0\leq |V(j)-\la|\leq 2|k|^{-1}$, for $\delta_{0}=\nu^{\frac13}|k|^{-\frac13}$, we have
\begin{align}\label{weak esti 1}
|\lan w_{Na},f\ran|
\lesssim &\lrs{\nu^{\frac14}|k|^{-\frac14}\lrv{f(j)}\lrs{|V(j)-\la| + \delta_{0}}^{-\frac34} +\lrs{|k|^{-1}\|\pa_y f\|_{L^\infty}+\|f\|_{L^\infty}}}A_{F}.
\end{align}
\item If $|V(j)-\la|\geq 2|k|^{-1}$, for $\delta_{1}=|k|^{-1}$, we have
\begin{align}\label{weak esti 2}
|\lan w_{Na},f\ran|
\lesssim& \Big[ \nu^{\frac14}|k|^{\frac12}|f(j)|(|k(V(j)-\la)|+1)^{-\frac34}+\nu^{\frac12}|k|^{-\frac12}\Big\|\pa_{y}\lrs{f\frac{\chi_{\delta_{1}}}{V-\la}} \Big\|_{L^{2}}\\
&+(\nu^{\frac16}|k|^{-\frac16}+|k|^{-\frac32})\Big\|f\frac{\chi_{\delta_{1}}}{V-\la}\Big\|_{L^2} +|k|^{-\frac12}\n{(\pa_{y},k)f}_{L^{2}(E_{\delta_{1}})}\Big]A_{F}.\notag
\end{align}
\end{enumerate}
\end{lemma}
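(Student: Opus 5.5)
The plan is to test the Navier-slip equation \eqref{equ:Navier-slip,OS} against a multiplier built from $f$, split by a cutoff adapted to the critical layer $E_\delta$, and handle the boundary trace at $y=j$ separately. Take $\delta=\delta_0$ in case (1) and $\delta=\delta_1$ in case (2), and write
\[
\lan w_{Na},f\ran=\lan w_{Na},f\chi^c_{\delta}\ran+\lan w_{Na},f\chi_{\delta}\ran .
\]

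\textbf{The critical-layer piece.} The first term lives on $E_\delta$, which has length $\lesssim\delta$ since $V'\ge c_0$. In case (1) I will bound it by $\|f\|_{L^\infty}\|w_{Na}\|_{L^1(E_{\delta_0})}$, using $\|w_{Na}\|_{L^1(E_{\delta_0})}\lesssim\delta_0^{1/2}\|w_{Na}\|_{L^2}$. For $F\in L^2$ this is combined with \eqref{esti: wL2}; for $F\in H_k^{-1}$ it is combined with \eqref{esti: wH-1}. In both cases I expect exactly $A_F$ to come out.

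In case (2) the critical layer of width $|k|^{-1}$ is separated from the boundary point $j$. There I will treat $f\chi^c_{\delta_1}$ as a test function vanishing at both endpoints. This produces the $|k|^{-1/2}\|(\pa_y,k)f\|_{L^2(E_{\delta_1})}$ term via the $H^{-1}_k$/$L^2$ duality and the resolvent bounds of Lemma \ref{pro: reso esti FL2}.

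\textbf{The regular piece.} Away from $E_\delta$ I divide the equation by $ik(V-\la)$ and pair with $f\chi_\delta$. This gives
\[
\lan w_{Na},f\chi_\delta\ran=\Big\lan F,\tfrac{f\chi_\delta}{ik(V-\la)}\Big\ran+\tfrac{\nu}{ik}\Big\lan(\pa_y^2-k^2)w_{Na},\tfrac{f\chi_\delta}{V-\la}\Big\ran+\Big\lan V''\phi_{Na},\tfrac{f\chi_\delta}{V-\la}\Big\ran-\tfrac{o(\nu,k)}{ik}\Big\lan w_{Na},\tfrac{f\chi_\delta}{V-\la}\Big\ran .
\]
I then estimate the four terms as follows.

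1. The source term is bounded by $\|F\|_{H^{-1}_k}\,\|(\pa_y,k)(f\chi_\delta/(V-\la))\|_{L^2}$, or by $\|F\|_{L^2}\,\|f\chi_\delta/(V-\la)\|_{L^2}$. Both are controlled using \eqref{esti: chi}. Since $f$ need not vanish at $j$, I first subtract $f(j)\eta$, where $\eta$ is a cutoff of width comparable to $|V(j)-\la|+\delta$. The remainder then lies in $H_0^1$, and the $f(j)\eta$ part is absorbed into the boundary analysis below.
2. The nonlocal term $V''\phi_{Na}$ is bounded using $\|u_{Na}\|_{L^2}$ from \eqref{esti: wL2}/\eqref{esti: wH-1}, together with $\|\phi_{Na}\|_{L^\infty}\lesssim|k|^{-1/2}\|u_{Na}\|_{L^2}$.
3. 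The $o(\nu,k)$ term is harmless because $|o|\le\epsilon_1(\nu k^2)^{1/3}$.
4. For the viscous term I integrate by parts once. Since $w_{Na}(j)=0$, the only boundary contribution is
\[
\frac{\nu}{ik}\,\pa_y w_{Na}(j)\,\frac{f(j)\chi_\delta(j)}{V(j)-\la}.
\]
The interior part $\nu|k|^{-1}\|\pa_y w_{Na}\|_{L^2}\,\|\pa_y(f\chi_\delta/(V-\la))\|_{L^2}$ is controlled by the $\nu\|(\pa_y,k)w_{Na}\|$ bounds. This yields the $\nu^{1/2}|k|^{-1/2}\|\pa_y(f\chi_{\delta_1}/(V-\la))\|_{L^2}$ and $(\nu^{1/6}|k|^{-1/6}+|k|^{-3/2})\|f\chi_{\delta_1}/(V-\la)\|_{L^2}$ terms in \eqref{weak esti 2}. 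In case (1) the same terms instead collapse into $|k|^{-1}\|\pa_yf\|_{L^\infty}+\|f\|_{L^\infty}$.

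\textbf{Main obstacle: the boundary trace.} The hard step is the boundary term carrying $f(j)$. It requires a sharp trace bound on $\nu|\pa_y w_{Na}(j)|$ that reflects the boundary-layer scale $\big(\nu/(|k|(|V(j)-\la|+\delta_0))\big)^{1/2}$. My first attempt will be Gagliardo–Nirenberg localized to a layer of that width near $j$:
\[
|\pa_y w_{Na}(j)|^2\lesssim\|\pa_y w_{Na}\|_{L^2}\,\|\pa_y^2 w_{Na}\|_{L^2}+(\text{layer width})^{-1}\|\pa_y w_{Na}\|_{L^2}^2 .
\]
Here $\nu\pa_y^2w_{Na}$ is recovered from the equation as $F+ik(V-\la)w_{Na}+\dots$, with the factor $|V-\la|$ kept small in the layer. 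Tracking the powers should give the factor $\nu^{1/4}|k|^{-1/4}(|V(j)-\la|+\delta_0)^{-3/4}$, and in case (2) the factor $(|k(V(j)-\la)|+1)^{-3/4}$.

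If this loses a power, the fallback is a duality argument. Write $\nu\pa_yw_{Na}(j)=\lan F,\Phi\ran$, where $\Phi$ solves the adjoint Navier-slip problem with inhomogeneous boundary data at $j$. $\Phi$ is essentially the boundary-layer corrector, and I would bound it using \eqref{w1L2}–\eqref{weightL2}.
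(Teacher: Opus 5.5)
\textbf{There is a genuine gap}, at exactly the step you flag as the main obstacle.

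\textbf{The trace step fails.} You integrate the viscous term by parts against $f\chi_\delta/(V-\la)$, which does not vanish at $y=j$. This makes every contribution carrying $f(j)$ depend on the pointwise trace $\nu\pa_y w_{Na}(j)$. In the case $F\in H^{-1}_k$ (the case the paper actually proves, and the one needed later for \eqref{c1H-1}), no such bound exists. There $\nu\pa_y^2 w_{Na}$ is only in $H^{-1}$, so $\pa_y w_{Na}$ has no trace. Concretely, take $F=\pa_y G$ with $G$ smooth, $|G|\le 1$, $G(j)=1$, supported in a layer of width $\epsilon$ at $j$. Integrate the equation from $y$ to $j$ and average in $y$, using $w_{Na}(0)=w_{Na}(1)=0$. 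This gives $\nu\pa_y w_{Na}(j)=-G(j)+O_{\nu,k}(\n{G}_{L^2})$, while $\n{F}_{H^{-1}_k}\le\n{G}_{L^2}\le\epsilon^{1/2}$. Neither Gagliardo--Nirenberg nor your duality fallback can rescue this. The adjoint function representing the trace must have $\Phi(j)\neq 0$, so it is not admissible in the $H^{-1}_k$--$H^1_0$ pairing. For $F\in L^2$ a trace bound is at least plausible, but that is not enough here.

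\textbf{The missing idea} is to never let the viscous term see a test function that is nonzero at $j$, and to use the Navier-slip condition $w_{Na}(j)=0$ instead of a trace. Write $\vp=f\chi_\delta/(V-\la)=\vp_1+\vp(j)\eta_{\delta_*}$, with $\vp_1\in H^1_0$ and $\eta_{\delta_*}$ a bump of width $\delta_*$ at $j$.
\begin{enumerate}
\item Apply the weak form only to $\vp_1$. This produces no boundary term and costs $\n{\eta_{\delta_*}'}_{L^2}\le\delta_*^{-1/2}$ plus lower-order terms.
\item Bound the leftover directly, without using the equation: $|\lan (V-\la)w_{Na},\eta_{\delta_*}\ran|\le \n{w_{Na}}_{L^1(\mathrm{supp}\,\eta_{\delta_*})}\n{(V-\la)\eta_{\delta_*}}_{L^\infty}\lesssim \delta_*^{3/2}\nu^{-1}\n{F}_{H^{-1}_k}(|V(j)-\la|+\delta_*)$. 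The factor $\delta_*^{3/2}$ comes from $w_{Na}(j)=0$.
\item Balance $\nu^{-1}|k|\delta_*^{3/2}(|V(j)-\la|+\delta_0)=\delta_*^{-1/2}$. This gives exactly the scale you guessed, $\delta_*=\nu^{1/2}|k|^{-1/2}(|V(j)-\la|+\delta_0)^{-1/2}$.
\item Combined with $|\vp(j)|\lesssim |f(j)|(|V(j)-\la|+\delta_0)^{-1}$, this yields the $f(j)$-term of \eqref{weak esti 1}. Case (2) is identical with $\delta_0$ replaced by $|k|^{-1}$.
\end{enumerate}
This is effectively an averaged trace over width $\delta_*$, which $H^{-1}_k$ data can control.

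\textbf{A second, smaller gap} is the nonlocal term in case (1). Your plan bounds $\lan V''\phi_{Na}, f\chi_{\delta_0}/(V-\la)\ran$ by $\n{\phi_{Na}}_{L^\infty}\n{f\chi_{\delta_0}/(V-\la)}_{L^1}$. This only gives $|k|^{-1}\log(1/\delta_0)\n{f}_{L^\infty}A_F$, a $\log\nu$ loss when $|k|\sim 1$ and the critical point $y_\la$ is interior. The paper avoids this by subtracting $(V''\phi_{Na}f)(y_\la)$.
\begin{itemize}
\item The difference quotient is controlled by $\n{\pa_y(V''f\phi_{Na})}_{L^2}$, with no logarithm.
\item For the constant part, principal-value cancellation in $\int_0^1\chi_{\delta_0}/(V-\la)\,dy$ leaves only logarithms of the distance from $y_\la$ to the endpoints. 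These are compensated by $|\phi_{Na}(y_\la)|\le \min(y_\la,1-y_\la)^{1/2}\n{\pa_y\phi_{Na}}_{L^2}$, which follows from $\phi_{Na}(0)=\phi_{Na}(1)=0$.
\end{itemize}
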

\begin{proof}
The estimates for the case $F\in L^2$ are established following the same line as those for $F\in H^{-1}_k$, merely replacing the resolvent estimate \eqref{esti: wH-1} by \eqref{esti: wL2} as required in the subsequent proof. We therefore present only the proofs corresponding to the case $F\in H^{-1}_k$.

 For simplicity, we deal with the case $j=1$, since the case $j=0$  follows in a similar manner.
First, we rewrite
\begin{align}\label{inner wna,f}
\lra{w_{Na},f}=\blra{(V-\la)w_{Na},\frac{f\chi_{\delta}}{V-\la}}+\lra{w_{Na},f\chi_{\delta}^{c}}.
\end{align}
We then analyze $\lra{(V-\la)w_{Na},\vp}$. Testing \eqref{equ:Navier-slip,OS} with $\vp\in H_{0}^{1}(0,1)$ yields
\begin{align*}
\lra{-\nu(\pa^2_y-k^2)w_{Na} +ik(V-\la)w_{Na} -ik V''\phi_{Na}+o(\nu,k)w_{Na},\vp} =\lra{F,\vp}.
\end{align*}
Using the resolvent estimates \eqref{esti: wH-1} and the condition $\nu k^{2}\lesssim 1$, we derive
\begin{align*}
&|\lra{(V-\la)w_{Na}-V'' \phi_{Na},\vp}|\\
&\lesssim |k|^{-1}\lrs{\n{F}_{H_{k}^{-1}}\n{\vp}_{H_{k}^{1}}+\nu\n{w_{Na}'}_{L^{2}}\n{\vp'}_{L^{2}}
+(\nu k^{2}+\epsilon_{1}\nu^{\frac13}|k|^{\frac23})\n{w_{Na}}_{L^{2}}\n{\vp}_{L^{2}}}\notag\\
&\lesssim |k|^{-1}\n{F}_{H_{k}^{-1}} \lrs{\|\varphi\|_{H^{1}_{k}}+\nu^{-\frac13}|k|^{\frac13}\|\varphi\|_{L^{2}}},\notag
\end{align*}
which implies
\begin{equation}
\begin{aligned}\label{equ:U-la,w,vp}
|\lra{(V-\la)w_{Na},\vp}|
\lesssim |k|^{-1}\n{F}_{H_{k}^{-1}} \lrs{\|\varphi\|_{H^{1}_{k}}+\nu^{-\frac13}|k|^{\frac13}\|\varphi\|_{L^{2}}}+|\lra{V'' \phi_{Na},\vp}|.
\end{aligned}
\end{equation}

For $\varphi\in H^1(0,1)$ with $\varphi(0)=0$, we construct an auxiliary function $\vp_1 \in H_{0}^{1}(0,1)$ by
\begin{align}\label{equ:vp1,vp,decomposition}
\vp_{1}(y)=\vp(y)-\vp(1)\eta_{\delta_{*}}(y),
\end{align}
where the $C^{1}$-smooth function $\eta_{\delta_{*}}(y)$ satisfies
\begin{equation}\label{equ:eta,delta,star}
\left\{
\begin{aligned}
&0\leq \eta_{\delta_{*}}(y)\leq 1,\quad  \eta_{\delta_{*}}(1)=1,\quad \text{supp}\eta_{\delta_{*}} \subset [1-\delta_*, 1],\\
		&\n{\eta_{\delta_{*}}}_{L^\infty} = 1, \quad \n{\eta_{\delta_{*}}}_{L^2} \leq\delta_*^{\frac{1}{2}},\quad
\n{\eta_{\delta_{*}}}_{L^{1}}\leq \delta_{*},\quad		\n{\eta_{\delta_{*}}'}_{L^2} \leq \delta_*^{-\frac12}, \\
		&\n{(V-\la)\eta_{\delta_{*}}}_{L^\infty} \lesssim |V(1)-\la| + \delta_*.
	\end{aligned}
\right.
\end{equation}
It follows from \eqref{esti: wH-1} that
\begin{align*}
&\n{w_{Na}}_{L^{1}(1-\delta_{*},1)}=\delta_{*}\bbn{\int_{y}^{1}w_{Na}'(z)dz}_{L^{\infty}(1-\delta_{*},1)}\leq \delta_{*}^{\frac{3}{2}}\n{w_{Na}'}_{L^{2}}\lesssim \delta_{*}^{\frac{3}{2}}\nu^{-1}\n{F}_{H_{k}^{-1}},\\
&\babs{\lan V''\phi_{Na}, \eta_{\delta_{*}}\ran}=\bbabs{\int^{1}_{0} \lrs{\int^{y}_{0} \phi_{Na}' dz} V'' \eta_{\delta_{*}} dy}\lesssim \delta_{*}\|\pa_y \phi_{Na}\|_{L^2}\lesssim \delta_{*} \nu^{-\frac12}|k|^{-1}\n{F}_{H_{k}^{-1}}.
\end{align*}
These above estimates, together with \eqref{equ:U-la,w,vp}, \eqref{equ:vp1,vp,decomposition}, and \eqref{equ:eta,delta,star}, imply
\begin{align}\label{inner: u-lawna,vp}
&|\lra{(V-\la)w_{Na},\vp}|\\
&\leq |\vp(1)|\lra{(V-\la)w_{Na},\eta_{\delta_{*}}}+|\lra{(V-\la)w_{Na},\vp_{1}}|\notag\\
&\lesssim |\vp(1)|\n{w_{Na}}_{L^{1}(1-\delta_{*},1)}\n{(V-\la)\eta_{\delta_{*}}}_{L^{\infty}}+
|\lra{(V-\la)w_{Na},\vp_{1}}|\notag\\
&\lesssim  |\vp(1)|
\nu^{-1}\delta_{*}^{\frac{3}{2}}\n{F}_{H_{k}^{-1}}\lrs{|V(1)-\la| + \delta_*}+|k|^{-1}\n{F}_{H_{k}^{-1}}\|\varphi-\vp(1)\eta_{\delta_{*}}(y)\|_{H_{k}^1}\notag\\
&\quad+\nu^{-\frac13}|k|^{-\frac23}\n{F}_{H_{k}^{-1}}\|\varphi-\vp(1)\eta_{\delta_{*}}(y)\|_{L^2}+|\lra{ V'' \phi_{Na},\vp-\vp(1)\eta_{\delta_{*}}}|\notag\\
&\lesssim  |\vp(1)||k|^{-1}\n{F}_{H_{k}^{-1}}\lrc{\nu^{-1}|k|\delta_{*}^{\frac{3}{2}}\lrs{|V(1)-\la| + \delta_*}+
\delta_{*}^{-\frac{1}{2}}+\nu^{-\frac13}|k|^{\frac13}\delta_{*}^{\frac{1}{2}}+\nu^{-\frac12}\delta_{*} }\notag\\
&\quad+|k|^{-1}\n{F}_{H_{k}^{-1}} \lrs{\|\pa_{y}\varphi\|_{L^{2}}+\nu^{-\frac13}|k|^{\frac13}\|\varphi\|_{L^2}}+|\lra{ V'' \phi_{Na},\vp}|,\notag
\end{align}
where in the last inequality we used $|k|\n{\vp}_{L^{2}}\lesssim \nu^{-\frac13}|k|^{\frac13}\|\varphi\|_{L^2}$.\smallskip

Now, we get into the analysis of \eqref{weak esti 1} and \eqref{weak esti 2}.\smallskip

\textbf{The proof of \eqref{weak esti 1}.}\, For $0\leq |V(1)-\la|\leq 2|k|^{-1}$, we set
\begin{align*}
\delta_{*}=\delta_{0}^{\frac32}(|V(1)-\la|+\delta_{0})^{-\frac12},
\quad \delta_{0}=\nu^{\frac{1}{3}}|k|^{-\frac{1}{3}},\quad \delta_{*}\leq \delta_{0},
\end{align*}
and hence obtain
\begin{equation}\label{equ:parameter,weak-type,case1}
\left\{
\begin{aligned}
&\nu^{-\frac13}|k|^{\frac13}\delta_{*}^{\frac{1}{2}}=\delta_{0}^{-1}\delta_{*}^{\frac12}\leq \delta_{*}^{-\frac{1}{2}},\quad
\nu^{-\frac12}\delta_{*}=|k|^{-\frac12}\delta_{0}^{-\frac32}\delta_{*}\leq  \delta_{*}^{-\frac{1}{2}},\\
&\nu^{-1}|k|\delta_{*}^{\frac{3}{2}}\lrs{|V(1)-\la| +\delta_{0} }=\delta_{*}^{-\frac{1}{2}}=\nu^{-\frac{1}{4}}|k|^{\frac{1}{4}}(|V(1)-\la|+\delta_{0})^{\frac14}.
\end{aligned}
\right.
\end{equation}
Inserting \eqref{equ:parameter,weak-type,case1} into \eqref{inner: u-lawna,vp},
we obtain
\begin{align}\label{inner: wNa,var}
|\lra{(V-\la)w_{Na},\vp}|
\lesssim & |\vp(1)|\n{F}_{H_{k}^{-1}} \nu^{-\frac14}|k|^{-\frac34}\lrs{|V(1)-\la| + \delta_{0}}^{\frac14}\\
&+|k|^{-1}\n{F}_{H_{k}^{-1}} \lrs{\|\pa_{y}\varphi\|_{L^{2}}+\delta_{0}^{-1}\|\varphi\|_{L^2}}+|\lra{ V'' \phi_{Na},\vp}|.\notag
\end{align}
Using \eqref{inner: wNa,var} with $\varphi=f\frac{\chi_{\delta_{0}}}{V-\la}$, we arrive at
\begin{equation}\label{innerproduct wf}
\begin{aligned}
|\lan w_{Na},f\ran|\leq \big|\blra{(V-\la)w_{Na},f\frac{\chi_{\delta_{0}}}{V-\la}}\big|+\lrv{\lra{w_{Na},f\chi_{\delta_{0}}^{c}}}
\leq \mathrm{I}_{1}+\mathrm{I}_{2}+\mathrm{I}_{3}+\mathrm{I}_{4},
\end{aligned}
\end{equation}
where
\begin{align*}
\mathrm{I}_{1}=& \bbabs{f(1)\frac{\chi_{\delta_{0}}(1)}{V(1)-\la}}\n{F}_{H_{k}^{-1}}\nu^{-\frac14}|k|^{-\frac34}\lrs{|V(1)-\la| + \delta_{0}}^{\frac14},\\
\mathrm{I}_{2}=&|k|^{-1}\n{F}_{H_{k}^{-1}}\lrs{\bbn{\pa_{y}\lrs{f\frac{\chi_{\delta_{0}}}{V-\la}}}_{L^{2}}+ \delta_{0}^{-1}\bbn{f\frac{\chi_{\delta_{0}}}{V-\la}}_{L^2}},\\
\mathrm{I}_{3}=&\lrv{\bblra{V'' \phi_{Na}, f\frac{\chi_{\delta_{0}}}{V-\la}}},\\
\mathrm{I}_{4}=&\lrv{\lra{w_{Na},f\chi_{\delta_{0}}^{c}}}.
\end{align*}

For $\mathrm{I}_{1}$, using the expression \eqref{fun: chi1}, we have
\begin{align*}
\bbabs{\frac{\chi_{\delta_{0}}(1)}{V(1)-\la}}\lesssim (|V(1)-\la|+\delta_{0})^{-1},
\end{align*}
which implies that
\begin{align*}
\mathrm{I}_{1}\leq \nu^{-\frac14}|k|^{-\frac34}|f(1)|\n{F}_{H_{k}^{-1}}\lrs{|V(1)-\la| + \delta_{0}}^{-\frac{3}{4}} .
\end{align*}

For $\mathrm{I}_{2}$, applying the estimate \eqref{esti: chi}, we have
\begin{align*}
\mathrm{I}_{2}
\leq&|k|^{-1}\n{F}_{H_{k}^{-1}}\lrc{\|\pa_y f\|_{L^\infty}\bbn{\frac{\chi_{\delta_{0}}}{V-\la}}_{L^2} +\|f\|_{L^\infty}\lrs{\bbn{\pa_y\lrs{\frac{\chi_{\delta_{0}}}{V-\la}}}_{L^2}
+\delta_{0}^{-1}\bbn{\frac{\chi_{\delta_{0}}}{V-\la}}_{L^2}}}\notag\\
\lesssim & |k|^{-1}\n{F}_{H_{k}^{-1}}\lrs{\nu^{-\frac16}|k|^{\frac16}\|\pa_y f\|_{L^\infty}+\nu^{-\frac12}|k|^{\frac12}\|f\|_{L^\infty}}.\notag
\end{align*}

For $\mathrm{I}_{3}$,
we first define
\begin{equation*}
\left\{
\begin{aligned}
&y_{\la}\equiv 1,\  &\la\in [V(1), V(1)+2|k|^{-1}],\\
&\text{$y_{\la}$ is the unique point such that $V(y_{\la})=\la$},\ &\la\in [V(1)-2|k|^{-1}, V(1)].
\end{aligned}
\right.
\end{equation*}
Then we have
\begin{align}\label{inner: phiNa,fchi mid}
\mathrm{I}_{3}\leq& \int^{1}_{0} \lrv{\frac{\lrc{( V'' \phi_{Na} f)(y)-( V''\phi_{Na} f)(y_{\la})} \chi_{\delta_{0}}(y)}{V(y)-V(y_{\la})}}dy +\lrv{\int^{1}_{0}\frac{( V''\phi_{Na} f)(y_{\la}) \chi_{\delta_{0}}(y)}{V(y)-V(y_{\la})}dy }\\
:=& \mathrm{I}_{3,1}+\mathrm{I}_{3,2}.\notag
  \end{align}

Noting that $|V(y)-V(y_{\la})|\sim |y-y_{\la}|$, we use H\"{o}lder's inequality and resolvent estimate \eqref{esti: wH-1} to obtain
\begin{align*}
 \mathrm{I}_{3,1}
 \leq & \int^{1}_{0} \lrv{\frac{\int_{y_{\la}}^{y}\pa_{z}(V'' \phi_{Na} f)(z)dz}{V(y)-V(y_{\la})}}dy
 \lesssim\|\pa_y ( V'' f \phi_{Na})\|_{L^2}\\
 \lesssim &\|\phi_{Na}\|_{L^2}\|\pa_y f\|_{L^\infty}+(\|\phi_{Na}\|_{L^2}+\|\pa_y \phi_{Na}\|_{L^2})\|f\|_{L^\infty}\\
 \lesssim &\nu^{-\frac12}|k|^{-1}\|F\|_{H^{-1}_{k}}(|k|^{-1}\|\pa_y f\|_{L^\infty}+\|f\|_{L^\infty}).
\end{align*}
For $\mathrm{I}_{3,2}$, if $V(1)-2|k|^{-1}\leq \la\leq V(1)-\delta_{0}$, we apply the resolvent estimate \eqref{esti: wH-1} to get
\begin{align*}
\mathrm{I}_{3,2}\lesssim &\|f\|_{L^\infty}|\phi_{Na}(y_{\la})|\lrv{\int^{1}_{0}\frac{\chi_{\delta_{0}}(y)}{V(y)-V(y_{\la})}dy}\\
\lesssim &\|f\|_{L^\infty}\lrv{\int^{1}_{y_{\la}} \pa_y \phi_{Na} dy}
\lrv{\int_{E_{\delta_{0}}}\frac{2(V(y)-\la) }{\delta_{0}^2}-\frac{(V(y)-\la)^{3}}{\delta_{0}^4} dy+
\int_{(0,1)\setminus E_{\delta_{0}}} \frac{1}{V(y)-\la} dy} \\
\lesssim & \|f\|_{L^\infty}\|\pa_y \phi_{Na}\|_{L^2} (1-y_{\la})^{\frac12} (1+|\ln(1-y_{\la})|+|\ln(y_{\la})|)\\
\lesssim & \nu^{-\frac12}|k|^{-1}\|F\|_{H^{-1}_{k}}\|f\|_{L^\infty}.
\end{align*}

If $V(1)-\delta_{0}\leq \la \leq V(1)$, let $y_{\la,\delta_{0}}$ be the unique point such that $V(y_{\la,\delta_{0}})=\la-\delta_{0}$.
By the resolvent estimate \eqref{esti: wH-1}, we have
\begin{align*}
\mathrm{I}_{3,2}\lesssim &\|f\|_{L^\infty}|\phi_{Na}(y_{\la})|\lrv{\int^{1}_{0}\frac{\chi_{\delta_{0}}(y)}{V(y)-V(y_{\la})}dy}\\
\lesssim &\|f\|_{L^\infty}\lrv{\int^{1}_{y_{\la}} \pa_y \phi_{Na} dy} \lrv{\int_{y_{\la,\delta_{0}}}^{1}\frac{2(V(y)-\la) }{\delta_{0}^2}-\frac{(V(y)-\la)^{3}}{\delta_{0}^4} dy+
\int_{0}^{y_{\la,\delta_{0}}} \frac{1}{V(y)-\la} dy} \\
\lesssim & \|f\|_{L^\infty}\|\pa_y \phi_{Na}\|_{L^2} (1-y_{\la})^{\frac12} (1+|\ln(y_{\la,\delta_{0}}-y_{\la})|+|\ln(y_{\la})|)\\
\lesssim & \nu^{-\frac12}|k|^{-1}\|F\|_{H^{-1}_{k}}\|f\|_{L^\infty}.
\end{align*}
where in the last inequality we also used that $y_{\la}-y_{\la,\delta_{0}}\sim \delta_{0}$ and $1-y_{\la}\lesssim \delta_{0}$.

Combining the above estimates for $\mathrm{I}_{3,1}$ and $\mathrm{I}_{3,2}$, we deduce
\begin{align*}
\mathrm{I}_{3}\lesssim \nu^{-\frac12}|k|^{-1}\|F\|_{H^{-1}_{k}}(|k|^{-1}\|\pa_y f\|_{L^\infty}+\|f\|_{L^\infty}).
\end{align*}

For $\mathrm{I}_{4}$, by \eqref{esti: wH-1} and $\|\chi_{\delta_{0}}^{c}\|_{L^2}\leq \delta_{0}^{\frac12}=\nu^{\frac16}|k|^{-\frac16}$, we have
\begin{align*}
\mathrm{I}_{4}=\lrv{\lra{w_{Na},f\chi_{\delta_{0}}^{c}}}\leq \|w_{Na}\|_{L^2}\|\chi_{\delta_{0}}^{c}\|_{L^2}\|f\|_{L^\infty(E_{\delta_{0}})}\lesssim  \nu^{-\frac12}|k|^{-\frac12}\|F\|_{H^{-1}_{k}}\|f\|_{L^\infty(E_{\delta_{0}})}.
\end{align*}

Inserting estimates for $\mathrm{I}_{1}$--$\mathrm{I}_{4}$ into \eqref{innerproduct wf}, we obtain
\begin{align*}
|\lan w_{Na},f\ran|\lesssim &\nu^{-\frac14}|k|^{-\frac34}\lrv{f(1)}\n{F}_{H_{k}^{-1}}\lrs{|V(1)-\la| + \delta_{0}}^{-\frac34} \\
&+\nu^{-\frac12}|k|^{-\frac12}\n{F}_{H_{k}^{-1}} \lrs{|k|^{-1}\|\pa_y f\|_{L^\infty}+\|f\|_{L^\infty}},
\end{align*}
which completes the proof of \eqref{weak esti 1}.\smallskip

\textbf{The proof of \eqref{weak esti 2}.}\,
For $ |V(1)-\la|\geq 2|k|^{-1}$, by choosing
\begin{align*}
 \delta_{*}=\nu^{\frac12}|k|^{-\frac12}(|V(1)-\la|+|k|^{-1})^{-\frac12},\quad \delta_{1}=|k|^{-1},\quad \delta_{*}\leq \delta_{1},
 \end{align*}
we have
\begin{equation}\label{sim esti}
\left\{
\begin{aligned}
&\nu^{-1}|k|\delta_{*}^{\frac{3}{2}}\lrs{|V(1)-\la| + |k|^{-1}}=\delta_{*}^{-\frac{1}{2}}
=\nu^{-\frac14}\lrs{|k(V(1)-\la)| + 1}^{\frac14},\\
&\nu^{-\frac13}|k|^{\frac13}\delta_{*}^{\frac{1}{2}}\leq\nu^{-\frac14}\lrs{|k(V(1)-\la)| + 1}^{-\frac14},\quad
\nu^{-\frac{1}{2}}\delta_{*}=\lrs{|k(V(1)-\la)| + 1}^{-\frac12}.
\end{aligned}
\right.
\end{equation}

Inserting \eqref{sim esti}  into \eqref{inner: u-lawna,vp} with $\varphi=f\frac{\chi_{\delta_{1}}}{V(y)-\la}$, we obtain
\begin{align}\label{inner wna,fJ}
|\lan w_{Na},f\ran|
\leq \lrv{\blra{(V-\la)w_{Na},\frac{f\chi_{\delta_{1}}}{V-\la}}}+\lrv{\lra{w_{Na},f\chi_{\delta_{1}}^{c}}}\lesssim \mathrm{J}_{1}+\mathrm{J}_{2}+\mathrm{J}_{3}+\mathrm{J}_{4},
\end{align}
where
\begin{align*}
\mathrm{J}_{1}=&|k|^{-1}\n{F}_{H_{k}^{-1}}\bbabs{f(1)\frac{\chi_{\delta_{1}}(1)}{V(1)-\la}}\nu^{-\frac14}\lrs{|k(V(1)-\la)| + 1}^{\frac14} ,\\
\mathrm{J}_{2}=&|k|^{-1}\n{F}_{H_{k}^{-1}}\lrs{\bbn{\pa_{y}\lrs{f\frac{\chi_{\delta_{1}}}{V-\la}}}_{L^{2}}+ \nu^{-\frac13}|k|^{\frac13}\bbn{f\frac{\chi_{\delta_{1}}}{V-\la}}_{L^2}},\\
\mathrm{J}_{3}=&\lrv{\bblra{V'' \phi_{Na}, f\frac{\chi_{\delta_{1}}}{V-\la}}},\\
\mathrm{J}_{4}=&\lrv{\lra{w_{Na},f\chi_{\delta_{1}}^{c}}}.
\end{align*}

For $\mathrm{J}_{1}$, using the condition that $|V(1)-\la|\geq 2|k|^{-1}$, we have
\begin{align*}
\bbabs{\frac{\chi_{\delta_{1}}(1)}{V(1)-\la}}= \frac{1}{|V(1)-\la|}\lesssim |k||(|k(V(1)-\la)|+1)^{-1},
\end{align*}
which yields that
\begin{align}
\mathrm{J}_{1}\lesssim \nu^{-\frac14}\n{F}_{H_{k}^{-1}}|f(1)|(|k(V(1)-\la)|+1)^{-\frac34}.
\end{align}

The definition for $\mathrm{J}_{2}$ already matches the desired estimate. We therefore proceed to analyse $\mathrm{J}_{3}$. By \eqref{esti: wH-1}, we have
\begin{align}\label{esti: phinavp}
\mathrm{J}_{3}\leq \|\phi_{Na}\|_{L^2}\bbn{f\frac{\chi_{\delta_{1}}}{V-\la}}_{L^2}\lesssim \nu^{-\frac12}|k|^{-2}\|F\|_{H^{-1}_{k}}\bbn{f\frac{\chi_{\delta_{1}}}{V-\la}}_{L^2}.
\end{align}

For $\mathrm{J}_{4}$, the condition $|V(1)-\la|\geq 2|k|^{-1}$ implies $\chi^{c}_{\delta_{1}}(1)=0$.
Then, together with the boundary condition $f(0)=0$ and the resolvent estimate \eqref{esti: wH-1}, we derive
\begin{align*}
\mathrm{J}_{4}\leq &\|(\pa_y, k)\phi_{Na}\|_{L^2}\|(\pa_y,k) [f\chi^{c}_{\delta_{1}}]\|_{L^2}\\
\lesssim& \nu^{-\frac12}|k|^{-1}\|F\|_{H_{k}^{-1}}\|(\pa_y,k) [f\chi^{c}_{\delta_{1}}]\|_{L^2}\\
\lesssim& \nu^{-\frac12}|k|^{-1}\|F\|_{H_{k}^{-1}}\n{(\pa_{y},k)f}_{L^{2}(E_{\delta_{1}})},
\end{align*}
where in the last inequality we used $\n{\chi^{c}_{\delta_{1}}}_{L^{\infty}}\lesssim 1$ and $\n{\pa_{y}\chi^{c}_{\delta_{1}}}_{L^{\infty}}\lesssim \delta_{1}^{-1}=|k|$.

Putting the estimates for $\mathrm{J}_{1}$--$\mathrm{J}_{4}$ into \eqref{inner wna,fJ}, we deduce
\begin{align}
|\lan w_{Na},f\ran|
\lesssim& |k|^{-1}\|F\|_{H_{k}^{-1}}\Big[ \nu^{-\frac14}|k||f(1)|(|k(V(1)-\la)|+1)^{-\frac34}+\Big\|\pa_{y}\lrs{f\frac{\chi_{\delta_{1}}}{V-\la}}\Big\|_{L^{2}}\\
& +(\nu^{-\frac13}|k|^{\frac13}+\nu^{-\frac12} k^{-1})\Big\|f\frac{\chi_{\delta_{1}}}{V-\la}\Big\|_{L^2} +\nu^{-\frac12}\n{(\pa_{y},k)f}_{L^{2}(E_{\delta_{1}})}\Big].\notag
\end{align}
Hence, we complete the proof of \eqref{weak esti 2}.
\end{proof}

Building upon the resolvent estimates in Lemma \ref{pro: reso esti FL2} and the weak-type resolvent estimates in Lemma \ref{lemma: weak resolvent estimate},
we now derive the refined bounds for the coefficients $c_1$ and $c_2$.

\begin{lemma}\label{lemma: c1,c2 FL2}
Under the same conditions as in Lemma \ref{lemma: weak resolvent estimate}, we have
\begin{align}
&(1+|k(\la-V(0))|)^{\frac34}|c_{1}|+(1+|k(\la-V(1))|)^{\frac34}|c_{2}|\lesssim \nu^{-\frac{1}{2}}|k|^{-\frac12}\|F\|_{H_{k}^{-1}},\label{c1H-1}\\
&(1+|k( \la-V(0))|)^{\frac34}|c_1|+(1+|k( \la-V(1))|)^{\frac34}|c_2|\lesssim \nu^{-\frac16}|k|^{-\frac56}\|F\|_{L^2}, \label{c1FL2}\\
&(1+|k( \la-V(0))|)|c_1|+(1+|k( \la-V(1))|)|c_2|\lesssim \nu^{-\frac{1}{12}}|k|^{-\frac53}\|F\|_{H_{k}^{1}}.\label{c1 FH1}
\end{align}
\end{lemma}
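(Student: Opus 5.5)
We will use the explicit formulas \eqref{def: c1c2}: $c_1=\lan w_{Na},f_1\ran$ and $c_2=\lan w_{Na},f_2\ran$, where
\begin{align*}
f_1(y)=\frac{\sinh k(1-y)}{\sinh k},\qquad f_2(y)=\frac{\sinh (ky)}{\sinh k}.
\end{align*}
Both lie in $H^1(0,1)$, with $f_1(1)=0$, $f_1(0)=1$ and $f_2(0)=0$, $f_2(1)=1$. Hence Lemma \ref{lemma: weak resolvent estimate} applies with $j=0$ for $c_1$ and with $j=1$ for $c_2$. By symmetry we only treat $c_2$. We use $0<f_2\le 1$, $\|\pa_y f_2\|_{L^\infty}\lesssim |k|$ and the concentration bound $|f_2(y)|\lesssim e^{-|k|(1-y)}$. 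Write $d=|V(1)-\la|$.

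\textbf{Case $d\le 2|k|^{-1}$.} Apply \eqref{weak esti 1}. The second term gives $(|k|^{-1}\|\pa_yf_2\|_{L^\infty}+\|f_2\|_{L^\infty})A_F\lesssim A_F$. For the first term, $\delta_0\le d+\delta_0\lesssim |k|^{-1}$ and $\nu^{\frac14}|k|^{-\frac14}\delta_0^{-\frac34}=1$, so it is also $\lesssim A_F$. In this range $(1+|k|d)^{\frac34}\sim 1$, which is the claimed bound with $A_F$ as in the lemma. This gives \eqref{c1H-1} and \eqref{c1FL2} here.

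\textbf{Case $d\ge 2|k|^{-1}$.} Apply \eqref{weak esti 2} with $\delta_1=|k|^{-1}$. The boundary term is already $\nu^{\frac14}|k|^{\frac12}(1+|k|d)^{-\frac34}A_F$. Since $\nu^{\frac14}|k|^{\frac12}\lesssim 1$ by $\nu k^2\lesssim1$, it gives the desired $(1+|k|d)^{-\frac34}A_F$. The main work, and what I expect to be the obstacle, is the three remaining norms. Since $f_2$ is concentrated in an $|k|^{-1}$-layer near $y=1$, where $|V-\la|\gtrsim d$, and $V'\ge c_0$, I will bound $\|f_2\chi_{\delta_1}/(V-\la)\|_{L^2}$ by splitting $[0,1]$ into $\{|y-1|\le d/2C\}$ and its complement.
\begin{itemize}
\item On the first set, $|V-\la|\gtrsim d$, so the contribution is $\lesssim d^{-1}|k|^{-\frac12}$.
\item On the complement, $f_2\lesssim e^{-|k|d/C}$, while $\|\chi_{\delta_1}/(V-\la)\|_{L^2}\lesssim |k|^{\frac12}$.
\end{itemize}
Together this gives $\lesssim |k|^{\frac12}(1+|k|d)^{-1}$. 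In the same way, $\|\pa_y(f_2\chi_{\delta_1}/(V-\la))\|_{L^2}\lesssim |k|^{\frac32}(1+|k|d)^{-1}$, using \eqref{esti: chi} for the far part. For $E_{\delta_1}$, which lies at distance $\gtrsim d$ from $1$, we have $\|(\pa_y,k)f_2\|_{L^2(E_{\delta_1})}\lesssim |k|^{\frac12}e^{-c|k|d}$. Inserting these into \eqref{weak esti 2}:
\begin{itemize}
\item the $\nu^{\frac12}|k|^{-\frac12}$ term gives $\nu^{\frac12}|k|(1+|k|d)^{-1}$;
\item the $(\nu^{\frac16}|k|^{-\frac16}+|k|^{-\frac32})$ term gives $(\nu^{\frac16}|k|^{\frac13}+|k|^{-1})(1+|k|d)^{-1}$;
\item the last term is exponentially small.
\end{itemize}
All of these are $\lesssim (1+|k|d)^{-\frac34}$, using $\nu k^2\lesssim 1$ and $|k|\ge1$. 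This proves \eqref{c1H-1} and \eqref{c1FL2}; the $L^2$ case only changes $A_F$.

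For \eqref{c1 FH1} the weak-type lemma is not the right tool, because the gain needed is $(1+|k|d)^{-1}$ with $\|F\|_{H^1_k}$. Instead we use the bound $|k|^2\|(V-\la)w_{Na}\|_{L^2}\lesssim\|F\|_{H^1_k}$ from \eqref{esti: wH1} directly. Write $c_2=\lan (V-\la)w_{Na},f_2\chi_{\delta}/(V-\la)\ran+\lan w_{Na},f_2\chi_\delta^c\ran$ with $\delta=\nu^{\frac13}|k|^{-\frac13}$ or $\delta=|k|^{-1}$, according to the same two cases. The first piece is bounded by $|k|^{-2}\|F\|_{H^1_k}\,\|f_2\chi_\delta/(V-\la)\|_{L^2}$, and the $L^2$ weighted computation above gives $|k|^{\frac12}(1+|k|d)^{-1}$, or $\delta^{-\frac12}$ when $d\lesssim|k|^{-1}$. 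The second piece is bounded using $\|w_{Na}\|_{L^2}\lesssim\nu^{-\frac16}|k|^{-\frac43}\|F\|_{H^1_k}$ together with the smallness of $\|f_2\chi^c_\delta\|_{L^2}$: it is at most $\delta^{\frac12}$, and exponentially small when $d\ge2|k|^{-1}$. Alternatively, one can use the $L^1$ bound in \eqref{esti: wH1}, $\nu^{\frac1{12}}|k|^{\frac53}\|w_{Na}\|_{L^1}\lesssim\|F\|_{H^1_k}$, which directly gives the target scaling $\nu^{-\frac1{12}}|k|^{-\frac53}$ when $d\lesssim |k|^{-1}$. Checking that the exponents collapse to $\nu^{-\frac1{12}}|k|^{-\frac53}(1+|k|d)^{-1}$ in all regimes is the delicate bookkeeping step. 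If the near-layer term fails, I would fall back on integrating by parts once in the equation, as in the derivation of \eqref{equ:U-la,w,vp}.
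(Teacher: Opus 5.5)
Your treatment of \eqref{c1H-1} and \eqref{c1FL2} is correct and follows the paper. You use \eqref{weak esti 1} when $d\le 2|k|^{-1}$ and \eqref{weak esti 2} otherwise, and your split at $1-y\sim d$ plays the role of the paper's separation point $y_*$. In effect you merge the paper's Cases 1.1 and 1.3 into a single computation.

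The proof of \eqref{c1 FH1}, however, does not close as written, and it fails exactly in the bookkeeping you left open. Bounding a piece through $\|w_{Na}\|_{L^2}\lesssim\nu^{-\frac16}|k|^{-\frac43}\|F\|_{H^1_k}$ with your cutoff scales loses a factor $(\nu k^2)^{-\frac1{12}}$, which is unbounded at low frequency (e.g.\ $k=1$, $\nu\to0$).

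First, take $d\lesssim|k|^{-1}$ and $\delta=\delta_0$. Your bound on the first piece is $|k|^{-2}\delta_0^{-\frac12}\|F\|_{H^1_k}=\nu^{-\frac16}|k|^{-\frac{11}6}\|F\|_{H^1_k}$. Moreover, $\|f_2\chi_{\delta_0}/(V-\la)\|_{L^2}\sim\delta_0^{-\frac12}$ cannot be improved there, because $f_2\sim1$ near the critical point. This regime is rescued by your $L^1$ alternative, which is just \eqref{c1c2H1}.

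The real problem is $d\ge2|k|^{-1}$ with $\delta=|k|^{-1}$. There the critical-layer piece is controlled only by $\|w_{Na}\|_{L^2}\|f_2\chi^c_{\delta_1}\|_{L^2}\lesssim\nu^{-\frac16}|k|^{-\frac{11}6}e^{-c|k|d}\|F\|_{H^1_k}$. The exponential factor beats $(\nu k^2)^{-\frac1{12}}(1+|k|d)$ only once $|k|d\gtrsim\log\frac{1}{\nu k^2}$, not on the whole range $|k|d\ge 2$.

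The missing step is to use the $L^1$ bound in \eqref{esti: wH1} for this piece as well. This gives $|\lan w_{Na},f_2\chi^c_{\delta_1}\ran|\le\|w_{Na}\|_{L^1}\|f_2\|_{L^\infty(E_{\delta_1})}\lesssim\nu^{-\frac1{12}}|k|^{-\frac53}e^{-c|k|d}\|F\|_{H^1_k}$. Your first piece gives $|k|^{-\frac32}(1+|k|d)^{-1}\|F\|_{H^1_k}$, which is $\lesssim\nu^{-\frac1{12}}|k|^{-\frac53}(1+|k|d)^{-1}\|F\|_{H^1_k}$ since $\nu k^2\lesssim1$. This is what the paper does in Case 3.3, splitting at $y_*$ instead of using $\chi_{\delta_1}$; Case 3.1 is the direct use of \eqref{c1c2H1}.

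Equivalently, you could keep the pure $L^2$ route but take the balanced cutoff $\delta=\nu^{\frac16}|k|^{-\frac23}$. At that scale $|k|^{-2}\delta^{-\frac12}=\nu^{-\frac16}|k|^{-\frac43}\delta^{\frac12}=\nu^{-\frac1{12}}|k|^{-\frac53}$, and this balancing is exactly how the $L^1$ bound follows from the other two bounds in \eqref{esti: wH1}. Neither of your two choices, $\delta_0$ or $|k|^{-1}$, achieves this balance.
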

\begin{proof}
We restrict attention to the coefficient $c_{1}$, since $c_{2}$ can be handled similarly.
We divide the proof according to the regularity of $F$: $F\in H_{k}^{-1}$, $L^2$ or $ H^1_k$.
For simplicity, we set
\begin{align*}
f(y)=\frac{\sinh (k(1-y))}{\sinh k}.
\end{align*}

\noindent\textbf{Case 1. $F\in H_{k}^{-1}$}.\smallskip

{\it Case 1.1. $\la\leq V(0)-2|k|^{-1}$}.

In this case, we have $E_{\delta_{1}}=\emptyset$ and $\chi_{\delta_{1}}=1$. Using \eqref{weak esti 2} and $\nu k^{2}\lesssim 1$, we have
\begin{align}\label{esti: c1 small}
&|c_1|=|\lan w_{Na}, f\ran|\\
\lesssim &|k|^{-1}\|F\|_{H^{-1}_{k}}\Big(\nu^{-\frac14}|k|(|k(V(0)-\la)|+1)^{-\frac34}
+\Big\|\pa_{y}\lrs{\frac{f}{V-\la}}\Big\|_{L^{2}}+\nu^{-\frac{1}{2}}\Big\|\frac{f}{V-\la}\Big\|_{L^2}\Big).\notag
\end{align}
By Lemma \ref{lemma: sinh}, with $\nu k^{2}\lesssim 1$, we get
\begin{align}\label{esti: fchiH1}
\Big\|\pa_{y}\lrs{\frac{f}{V-\la}}\Big\|_{L^{2}}\leq &\|\pa_y f\|_{L^2}\Big\|\frac{1}{V-\la}\Big\|_{L^\infty} +\|f\|_{L^2}\Big\|\pa_y\Big(\frac{1}{V-\la}\Big)\Big\|_{L^\infty}\\
\lesssim &|k|^{\frac12}|V(0)-\la|^{-1}+|k|^{-\frac12}|V(0)-\la|^{-2}\notag\\
\lesssim &|k|^{\frac32}(1+|k(V(0)-\la)|)^{-1}\lesssim \nu^{-\frac12}|k|^{\frac12}(1+|k(V(0)-\la)|)^{-1},\notag
\end{align}
and
\begin{align}\label{esti: fchiL2}
\nu^{-\frac12}\Big\|\frac{f}{V-\la}\Big\|_{L^2}\leq \nu^{-\frac12}\|f\|_{L^2}\Big\|\frac{1}{V-\la}\Big\|_{L^\infty}
\leq & \nu^{-\frac12}|k|^{-\frac12}|V(0)-\la|^{-1}\\
\leq &\nu^{-\frac12}|k|^{\frac12}(1+|k(V(0)-\la)|)^{-1}.\notag
\end{align}
Inserting \eqref{esti: fchiH1} and \eqref{esti: fchiL2} into \eqref{esti: c1 small}, we obtain
\begin{align}\label{case1c1}
|c_1|\lesssim \nu^{-\frac12}|k|^{-\frac12}(1+|k(V(0)-\la)|)^{-\frac34}\|F\|_{H^{-1}_{k}}.
\end{align}

{\it Case 1.2. $|V(0)-\la|\in[0, 2|k|^{-1}]$}.

Noting that $1\sim (1+|k(V(0)-\la)|)$ in this case, the weak-type resolvent estimate \eqref{weak esti 1} yields
\begin{align}\label{case3.2c1}
|c_1|=|\lan w_{Na}, f\ran|\lesssim& \nu^{-\frac14}|k|^{-\frac34}\lrs{|V(0)-\la| + \delta_{0}}^{-\frac34} \n{F}_{H_{k}^{-1}}\\
&+\nu^{-\frac12}|k|^{-\frac12}\n{F}_{H_{k}^{-1}} \lrs{|k|^{-1}\|\pa_y f\|_{L^\infty}+\|f\|_{L^\infty}}\notag\\
\lesssim&\nu^{-\frac12}|k|^{-\frac12}(1+|k(V(0)-\la)|)^{-\frac34}\|F\|_{H^{-1}_{k}},\notag
\end{align}
where in the last inequality we used that
\begin{align*}
&\nu^{-\frac14}|k|^{-\frac34}\lrs{|V(0)-\la| + \delta_{0}}^{-\frac34}\leq \nu^{-\frac14}|k|^{-\frac34}\delta_{0}^{-\frac{3}{4}}=
\nu^{-\frac12}|k|^{-\frac12},\\
&\|\pa_y f\|_{L^\infty}\lesssim |k|, \quad \|f\|_{L^\infty}\leq 1,\quad 1\sim (1+|k(V(0)-\la)|)^{-\frac{3}{4}}.
\end{align*}

{\it Case 1.3. $\la\geq V(0)+2|k|^{-1}$}.

Using the weak-type resolvent estimate \eqref{weak esti 2} and $\nu k^{2}\lesssim 1$, we have
\begin{align}\label{case3.3innerwnaf}
|c_1|=|\lan w_{Na},f\ran|
\lesssim& |k|^{-1}\|F\|_{H_{k}^{-1}}\Big[ \nu^{-\frac14}|k|(|k(V(0)-\la)|+1)^{-\frac34}+\Big\|\pa_{y}\lrs{f\frac{\chi_{\delta_{1}}}{V-\la}} \Big\|_{L^{2}}\\
&+\nu^{-\frac12}\Big\|f\frac{\chi_{\delta_{1}}}{V-\la}\Big\|_{L^2} +\nu^{-\frac12}\n{(\pa_{y},k)f}_{L^{2}(E_{\delta_{1}})}\Big].\notag
\end{align}

For a more refined analysis, we introduce a separation point defined by
\begin{equation*}
\left\{
\begin{aligned}
&y_{*}\equiv 1,\  &\la\in [2V(1)-V(0),\infty),\qquad\qquad \\
&\text{$y_{*}$ is the unique point: $\la-V(0)=2(\la-V(y_{*}))$},\ &\la\in[V(0)+2|k|^{-1},2V(1)-V(0)].
\end{aligned}
\right.
\end{equation*}
From this definition, it follows that
\begin{align}\label{equ:U-la,Linf}
\bbn{\frac{1}{V-\la}}_{L^{\infty}(0,y_{*})}\lesssim |\la-V(0)|^{-1}.
\end{align}
For $y_{*}<1$, due to that $2(V(y_{*})-V(0))=\la-V(0)$, we have $y_{*}\sim \la-V(0)$, and hence derive
\begin{align}
\n{f}_{L^{\infty}(y^{*},1)}=& \bbn{\frac{\sinh (k(1-y))}{\sinh k}}_{L^{\infty}(y^{*},1)}\lesssim e^{-|ky_{*}|}\lesssim |ky_{*}|^{-1}\lesssim |k(\la-V(0))|^{-1},\label{equ:f,Linf}\\
\n{\pa_{y}f}_{L^{\infty}(y^{*},1)}\lesssim&|k|\bbn{\frac{\cosh (k(1-y))}{\sinh k}}_{L^{\infty}(y^{*},1)}\lesssim |k|e^{-|ky_{*}|}\lesssim |y_{*}|^{-1}\lesssim |\la-V(0)|^{-1}.\label{equ:pa,f,Linf}
\end{align}

By \eqref{esti: chi}, \eqref{equ:U-la,Linf}, \eqref{equ:f,Linf}, and Lemma \ref{lemma: sinh}, we get
\begin{align*}
\Big\|f\frac{\chi_{\delta_{1}}}{V-\la}\Big\|_{L^2}
\leq &\|f\|_{L^2}\bbn{\frac{1}{V-\la}}_{L^{\infty}(0,y_{*})}+\|f\|_{L^\infty(y_{*},1)}\Big\| \frac{\chi_{\delta_{1}}}{V-\la}\Big\|_{L^2}\\
\lesssim &|k|^{-\frac{1}{2}}|\la-V(0)|^{-1}+\delta_{1}^{-\frac{1}{2}}|k(\la-V(0))|^{-1}\lesssim|k|^{\frac12}|k(\la-V(0))|^{-1}.
\end{align*}
Similarly, using \eqref{esti: chi}, \eqref{equ:U-la,Linf}--\eqref{equ:pa,f,Linf} and Lemma \ref{lemma: sinh}, we also have
\begin{align*}
\bbn{f\pa_y\lrs{\frac{\chi_{\delta_{1}}}{V-\la}}}_{L^2}+\bbn{\frac{\chi_{\delta_{1}}}{V-\la}\pa_y f}_{L^2}\lesssim |k|^{\frac32}|k(\la-V(0))|^{-1},
\end{align*}
which, together with $\nu k^{2}\lesssim 1$, implies that
\begin{align}
\Big\|\pa_{y}\lrs{f\frac{\chi_{\delta_{1}}}{V-\la}}\Big\|_{L^{2}}+\nu^{-\frac12}\Big\|f\frac{\chi_{\delta_{1}}}{V-\la}\Big\|_{L^2}\lesssim & \nu^{-\frac12}|k|^{\frac12}|k(\la-V(0))|^{-1}. \label{case3 fchiH1}
\end{align}

Similar to \eqref{equ:f,Linf} and \eqref{equ:pa,f,Linf}, we obtain
 \begin{align*}
 \|(\pa_y,k) f\|_{L^2(y^{*},1)}\lesssim |k|\|e^{-|ky|}\|_{L^2(y^{*},1)}\lesssim |k|^{\frac12} e^{-|ky_{*}|}\lesssim |k|^{\frac12}|k(\la-V(0))|^{-1},
 \end{align*}
 which, together with $E_{\delta_{1}}\subset (y_{*},1)$, implies
 \begin{equation}
\begin{aligned}\label{case3finfty}
\nu^{-\frac12}\n{(\pa_{y},k)f}_{L^{2}(E_{\delta_{1}})}
\lesssim \nu^{-\frac12}|k|^{\frac12}|k(\la-V(0))|^{-1}.
\end{aligned}
\end{equation}

Inserting  \eqref{case3 fchiH1}  and \eqref{case3finfty} into \eqref{case3.3innerwnaf}, with $(\la-V(0))\geq 2|k|^{-1}$
we derive
\begin{align}\label{case3c1}
(1+|k(\la-V(0))|)^{\frac34}|c_1|\lesssim \nu^{-\frac12}|k|^{-\frac12}\|F\|_{H^{-1}_{k}}.
\end{align}

\noindent\textbf{Case 2. $F\in L^2$}.

The analysis in this case follows a similar approach to that for $F \in H^{-1}_{k}$. Using the same argument, we also obtain
\begin{align*}
&(1+|k( \la-V(0))|)^{\frac34}|c_1|\lesssim \nu^{-\frac16}|k|^{-\frac56}\|F\|_{L^2}.
\end{align*}

\noindent\textbf{Case 3. $F\in H^{1}_{k}$.}\smallskip

{\it Case 3.1. $|\la-V(0)|\leq |k|^{-1}$.}

It directly follows from \eqref{c1c2H1} that
\begin{align*}
(1+|k( \la-V(0))|)|c_1| \lesssim \nu^{-\frac{1}{12}}|k|^{-\frac53}\|F\|_{H^{1}_{k}}.
\end{align*}

{\it Case 3.2. $V(0)-\la\geq |k|^{-1}$.}

By Lemma \ref{lemma: sinh} and \eqref{esti: wH1}, we have
\begin{align*}
|c_1|=&\lra{w_{Na},f}\leq \n{f}_{L^{2}}\bbn{\frac{1}{V-\la}}_{L^{\infty}}\|(V-\la)w_{Na}\|_{L^2}\\
\lesssim &|k|^{-\frac52}(\la-V(0))^{-1}\|F\|_{H^{1}_{k}}\lesssim |k|^{-\frac32}(1+|k(\la-V(0))|)^{-1}\|F\|_{H^{1}_{k}}.
\end{align*}

\textit{Case 3.3. $\la-V(0)\geq |k|^{-1}$.}

Using the bounds \eqref{equ:U-la,Linf}, \eqref{equ:f,Linf}, together with  \eqref{lemma: sinh} and \eqref{esti: wH1}, we get
\begin{align*}
|c_1|=\lra{w_{Na},f}
\leq& \n{f}_{L^{2}}\bbn{\frac{1}{V-\la}}_{L^{\infty}(0,y_{*})}\|(V-\la)w_{Na}\|_{L^2} +\| f\|_{L^\infty(y_{*},1)}\|w_{Na}\|_{L^1}\\
\lesssim & \nu^{-\frac{1}{12}}|k|^{-\frac53}|k(\la-V(0))|^{-1}\|F\|_{H^{1}_{k}}\\
\lesssim & \nu^{-\frac{1}{12}}|k|^{-\frac53}(1+|k (\la-V(0))|)^{-1}\|F\|_{H^{1}_{k}},
\end{align*}
where in the last inequality we used the condition that $\la-V(0)\geq |k|^{-1}$.
\end{proof}

We are now in a position to conclude the resolvent estimates for the vorticity under no-slip boundary condition.

\begin{proof}[\textbf{Proof of Proposition \ref{lemma:non-slip boundary,resolvent}}]
We notice that
\begin{align*}
&\|w\|_{L^2}\leq  \|w_{Na}\|_{L^2}+|c_1|\|w_1\|_{L^2}+|c_2|\|w_2\|_{L^2},\\
&\|\rho^{\frac12}_k w\|_{L^2}\leq  \|w_{Na}\|_{L^2}+|c_1|\|\rho^{\frac12}_kw_1\|_{L^2}+|c_2|\|\rho^{\frac12}_kw_2\|_{L^2}.
\end{align*}
The estimates \eqref{esti: u,wL2FL^2}--\eqref{esti: u,wL2FH-1} can be directly proven by resolvent estimates \eqref{esti: wL2}--\eqref{esti: wH-1} with the Navier-slip boundary condition, boundary layer correctors estimates \eqref{w1L2}--\eqref{weightL2}, and the coefficient estimates \eqref{c1H-1}--\eqref{c1 FH1}.
\end{proof}

\section{Space-Time estimates of the linearized NS}\label{sec: space-time estimate}

In this section, we establish the space-time estimates for the linearized Navier-Stokes system
with no-slip boundary condition
\begin{equation}\label{equ: om}
\left\{\begin{aligned}
&\pa_t\om-\nu(\pa^2_y-k^2)\om+ik U\om-ik \pa^2_y U\psi=-i k f_1-\pa_y f_2,\\
& (\pa^2_y-\al^2)\psi=\om,\quad \psi(t,0)=\psi(t,1)=\psi'( t, 0)=\psi'( t, 1)=0,\quad \om(0,y)=\om_{k}^{\mathrm{in}}(y),
\end{aligned}
\right.
\end{equation}
where we omit the subscript $k$ for brevity.
Also, we set the notation
 $\n{f}_{L^{p}L^{q}}=\n{f}_{L_{t}^{p}L_{y}^{q}}$.

\begin{proposition}\label{lemma:est,u,om,timespace,f12}
Let $(\om,\psi)$ be the solution to \eqref{equ: om}. Then there exist constants $\nu_0>0$ and $\epsilon_0>0$ such that, for $\nu\in(0,\nu_0]$, $\epsilon\in(0,\epsilon_{0}]$, it holds that
\begin{align*}
&|k|^2\| e^{\epsilon \nu^{\frac13}t} u\|^2_{L^2L^2}+\nu^{\frac12}|k|\|e^{\epsilon \nu^{\frac13}t} \om\|^2_{L^2L^2}
+\|e^{\epsilon \nu^{\frac13}t} \sqrt{1-(2y-1)^2}\om\|^2_{L^\infty L^2}+\|e^{\epsilon \nu^{\frac13}t} u\|^2_{L^\infty L^\infty}\\
&\quad\lesssim  E^{\mathrm{in}}+\nu^{-1}\|e^{\epsilon \nu^{\frac13}t}(f_1,f_2)\|^2_{L^2L^2},
\end{align*}
where $E^{\mathrm{in}}=:|k|^{-2} \|\pa_y\om^{\mathrm{in}}\|^2_{L^2}+\|u^{\mathrm{in}}\|^2_{H^1}$.
\end{proposition}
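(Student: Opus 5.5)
We will prove the space-time bounds by combining a frozen-time resolvent argument with energy estimates. The profile $U(t,y)$ evolves only on the slow time scale $\nu^{-1}$, while the decay rate is $\nu^{\frac13}$. We therefore split $[0,\infty)$ into intervals $I_n=[t_n,t_{n+1}]$ of length $T\sim\nu^{-\frac13}\cdot M$ with $M$ large but fixed (or $T$ of order $\nu^{-\frac12}$). On each $I_n$ we freeze $V(y)=U(t_n,y)$. The uniform lower bound $\pa_yU\geq c_0$ and the sign property of $\pa_y^2U$ ensure that every frozen $V$ satisfies \eqref{cond V}. Constants that depend only on $\|V\|_{H^4}$ and $c_0$ are uniform in $n$ by parabolic regularity, so no blow-up accumulates. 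The error $ik(U(t)-V)\om-ik\pa_y^2(U(t)-V)\psi$ is of size $\nu T\lesssim\nu^{\frac23}M$ in $W^{2,\infty}$. It will be absorbed as a perturbation, using the $L^2L^2$ control of $\om$ and $|k|u$ obtained below.

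\textbf{Step 1 ($L^2L^2$ bounds).} On each interval, conjugate with $e^{\epsilon\nu^{\frac13}t}$, extend in time and take the Fourier transform in $t$ (Plancherel). This gives the Orr–Sommerfeld problem \eqref{equ:non-slip,OS} with $\la=-\tau/k$ and $o(\nu,k)=-\epsilon\nu^{\frac13}$, which satisfies $|o|\leq\epsilon_1(\nu k^2)^{\frac13}$ once $\epsilon_0$ is small.

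We split $\om=\om_{I}+\om_{H}$. The part $\om_I$ solves the equation with zero initial data and forcing $-ikf_1-\pa_yf_2$, whose $H^{-1}_k$ norm is bounded by $\|(f_1,f_2)\|_{L^2}$. Proposition \ref{lemma:non-slip boundary,resolvent}, estimate \eqref{esti: u,wL2FH-1}, gives $\nu^{\frac14}|k|^{\frac12}\|\om_I\|_{L^2L^2}\lesssim\nu^{-\frac12}\|(f_1,f_2)\|_{L^2L^2}$. For $u_I$ we use \eqref{esti: wH-1} together with the improved coefficient bound \eqref{c1H-1}. Testing $w_j$ against $\phi$ gives $|k|\|u_j\|_{L^2}$ of order $\nu^{-\frac14}(1+|k(V(j)-\la)|)^{-\frac14}$ times $|c_j|$, which yields $|k|\|u_I\|\lesssim\nu^{-\frac12}\|F\|_{H^{-1}_k}$.

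The homogeneous part $\om_H$ is handled in the standard way. We write $\om_H=\om_H^{\mathrm{cut}}+\chi(t)$-corrector, with the initial datum placed as a forcing $\delta_{t=0}\om^{\mathrm{in}}$ smoothed in time. We then use \eqref{esti: uwL2FH1} and \eqref{c1 FH1} with $F\in H^1_k$, which gives the $|k|^{-2}\|\pa_y\om^{\mathrm{in}}\|^2$ term. For the velocity contribution we use the $\|u^{\mathrm{in}}\|_{H^1}$ term via the energy identity for $u$.

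\textbf{Step 2 ($L^\infty$ bounds).} For the weighted bound $\|\sqrt{1-(2y-1)^2}\,\om\|_{L^\infty L^2}$, we perform an energy estimate on the vorticity equation tested against $y(1-y)\bar\om$. The dangerous boundary term is killed by the weight. Derivatives hitting the weight produce $\nu\|\om\|_{L^2}\|\pa_y\om\|$-type terms and $\nu\int\om\bar\om$. The nonlocal term $k\,\pa_y^2U\,\psi$ is bounded by $|k|\|u\|_{L^2}\|\om\|_{L^2}$, and the forcing is handled by integrating by parts onto $(k,\pa_y)(y(1-y)\om)$. Everything is then controlled by the Step 1 quantities $|k|^2\|u\|^2_{L^2L^2}$ and $\nu^{\frac12}|k|\|\om\|^2_{L^2L^2}$.

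For $\|u\|_{L^\infty L^\infty}$ we write $u=(\pa_y,-ik)\psi$ with the Green representation and use $|\psi'(y)|\lesssim\|\sqrt{y(1-y)}\,\om\|_{L^2}$. This follows from Cauchy–Schwarz against a weight that is integrable thanks to the clamped boundary conditions: for $\psi'(y)=\int G_y\om$ with $G$ vanishing to second order at the walls, $|G_y|/\sqrt{y'(1-y')}$ is square integrable uniformly. Hence the $L^\infty$ bound on $u$ follows from the weighted $L^\infty L^2$ bound.

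\textbf{Main obstacle.} We expect the main difficulty to be closing the weighted $L^\infty L^2$ energy estimate in the presence of the nonlocal term $\pa_y^2U\psi$ and the no-slip boundary layer. The unweighted $\om$ has no uniform $L^2$ bound because of boundary vorticity of size $\nu^{-\frac14}$. We must check that every commutator term produced by the weight is of the form $\nu\|\om\|_{L^2}^2$ or $\nu^{\frac12}\|\om\|^2$, so that it is absorbed by $\nu^{\frac12}|k|\|\om\|^2_{L^2L^2}$. A secondary difficulty is that gluing the time intervals requires the initial datum on each $I_n$ to be controlled by the $L^\infty L^2$ and $H^1$ quantities at $t_n$. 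We plan to handle this by proving the energy bound in Step 2 globally first, using only the $L^2L^2$ norms, and then feeding it back into Step 1 interval by interval.
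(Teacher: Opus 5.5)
Your outline has genuine gaps. The two that break the argument are the $L^\infty$ bound in Step 3 and the gluing in Step 1. In addition, Step 2 does not close with the quantities you list.

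\textbf{Steps 2 and 3.} The inequality $\|u\|_{L^\infty}\lesssim\|\sqrt{y(1-y)}\,\om\|_{L^2}$ is false.

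\emph{Why the Green's function argument fails.} The clamped problem has no Green's function vanishing to second order. One writes $\pa_y\psi(y)=\int_0^1\pa_yG_D(y,y')\om(y')\,dy'$ with the Dirichlet Green's function, whose $y$-derivative has a unit jump at $y'=y$. Adding multiples of $\sinh(ky')$ and $\sinh(k(1-y'))$, as the compatibility conditions allow, cannot remove this jump. Hence the weighted $L^2$ norm of the kernel diverges like $|\log y|^{1/2}$ near the walls.

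\emph{An explicit counterexample.} The weight $y$ is critical in one dimension. Let $\pa_y\psi$ vanish on $(0,\epsilon_0)$, grow like $M^{-1}\log(y/\epsilon_0)$ up to the value $M$ at $y=\epsilon_0e^{M^2}$, and return to $0$ in the same logarithmic way, all within a tiny neighbourhood of $y=0$. Then $\|\sqrt y\,\pa_y^2\psi\|_{L^2}=O(1)$, the corrections restoring $\psi(1)=\pa_y\psi(1)=0$ and the term $k^2\psi$ are negligible, yet $\|\pa_y\psi\|_{L^\infty}=M$.

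\emph{What the paper does instead.} It uses $\|u\|_{L^\infty}\le\|\om\|_{L^1}$. It bounds the wall strips of width $\nu^{1/2}$ by $\nu^{1/4}\|\om\|_{L^2}$, and the rest by $\|\rho\,\om\|_{L^2}$ with $\rho\sim\nu^{-1/4}\min(y,1-y)$ near the walls. The latter requires a separate energy estimate for $\widetilde\rho\,\om$.

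\emph{Two ingredients you never produce, needed here and in Step 2.}
\begin{enumerate}
\item The commutator $\nu\lan\pa_y\om,\pa_y(y(1-y))\om\ran$ cannot be absorbed: the weight vanishes exactly where $\pa_y\om$ is large. Integrating by parts instead puts $-\frac{\nu}{2}(|\om(0)|^2+|\om(1)|^2)$ on the dissipative side, which is the wrong sign. So you need $\nu^{3/2}\|\pa_y\om\|^2_{L^2L^2}$. The paper obtains it, together with $\nu^{1/2}|k|\,\|\om\|^2_{L^\infty L^2}$, from an unweighted energy estimate. There the wall flux $(\nu\pa_y\om-f_2)|_{y=0,1}$ is computed via $\lan\pa_t\om,\sinh(ky)\ran=\lan\pa_t\om,\sinh k(1-y)\ran=0$.
\item The factor $e^{\epsilon\nu^{1/3}t}$ generates $\epsilon\nu^{1/3}\|\sqrt{y(1-y)}\,\om\|^2_{L^2L^2}$. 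For $|k|\ll\nu^{-1/6}$ this is not bounded by $\nu^{1/2}|k|\,\|\om\|^2_{L^2L^2}$. You need the interior bound $\nu^{1/3}|k|^{2/3}\|\rho_k^{1/2}\om\|^2_{L^2L^2}$, which is available from Proposition \ref{lemma:non-slip boundary,resolvent} but which you drop.
\end{enumerate}

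\emph{The nonlocal term.} Bounding it by $|k|\|u\|_{L^2}\|\om\|_{L^2}$ loses a factor $\nu^{-1/4}|k|^{-1/2}$. Take real parts and integrate by parts to reduce it to $|k|\|\psi\|_{L^2}\|\pa_y\psi\|_{L^2}\lesssim\|u\|_{L^2}^2$.

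\textbf{Step 1 and the gluing.} Restarting the frozen problem on each $I_n$ from $\om(t_n)$ loses powers of $\nu$.
\begin{itemize}
\item The homogeneous estimate costs $E(t_n)$, which contains $|k|^{-2}\|\pa_y\om(t_n)\|_{L^2}^2$ and $\|u(t_n)\|^2_{H^1}\sim\|\om(t_n)\|^2_{L^2}$.
\item As you observe yourself, the boundary layer only gives $\|\om(t_n)\|^2_{L^2}\lesssim\nu^{-1/2}|k|^{-1}(\cdots)$. Nothing in your scheme controls $\pa_y\om(t_n)$ pointwise in time.
\item Feeding Step 2 back into Step 1 is circular, since Step 2 consumes the global $L^2L^2$ bounds.
\end{itemize}

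The paper never restarts. It writes $\om=\sum_j\om_j$, where $\om_j$ solves the problem frozen at $U(t_{j+1})$ for all $t\ge t_j$. Each $\om_j$ has zero data for $j\ge1$ and forcing supported in $I_j=[j\nu^{-1/3},(j+1)\nu^{-1/3})$. The freezing error is treated as an $L^2$ forcing, at cost $\nu^{-1/3}|k|^{-2/3}$; as an $H^{-1}_k$ forcing it would lose $\nu^{-1/6}|k|^{-1}$. Its linear growth $\nu^{2/3}(j-j'+1)$ is beaten by the decay $e^{-\epsilon_1(j-j')}$ of $\om_{j'}$.

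\textbf{Smaller issues.}
\begin{itemize}
\item The resolvent lemmas assume $\nu k^2\le\|V'\|_{L^\infty}$, so high frequencies need a separate energy argument.
\item A forcing $\delta_{t=0}\om^{\mathrm{in}}$ has time-Fourier transform not in $L^2_\la$. So the homogeneous part does not follow from \eqref{esti: uwL2FH1}; the paper relies on the decomposition $\om_{H,1}+\om_{H,2}+\om_{H,3}$ from Chen--Wei--Zhang.
\item Testing $w_j$ against $\phi_j$ gives $|k|\|u_j\|_{L^2}\le\|w_j\|_{L^2}\lesssim\nu^{-1/4}(1+|k(V(j)-\la)|)^{1/4}$, with exponent $+\frac14$, not $-\frac14$. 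Even your version, combined with \eqref{c1H-1}, yields $\nu^{-3/4}|k|^{-1/2}\|F\|_{H^{-1}_k}$ rather than $\nu^{-1/2}\|F\|_{H^{-1}_k}$. A sharper corrector-velocity bound is needed, which the paper imports from Chen--Wei--Zhang.
\end{itemize}
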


Here, we prove the inviscid damping and enhanced dissipation estimates by using the refined resolvent estimates obtained in Section \ref{sec: Non slip}.
Building on these estimates, we further derive the weighted space-time $L^{\infty} L^{2}$ estimates for the vorticity and $L^{\infty}L^{\infty}$ estimates for the velocity.
The proof is divided into the high frequency and low frequency cases, that is, $\nu k^2\geq \n{\pa_{y}U}_{L_{t}^{\infty}L_{y}^{\infty}}$ and $\nu k^2\leq \n{\pa_{y}U}_{L_{t}^{\infty}L_{y}^{\infty}}$. The detailed analysis for the high frequency is carried out in Subsections \ref{subsec: hingfre}.
 For low frequency case, we first establish space-time estimates for the steady shear flow in Subsection \ref{subsec: lowfre}, and then extend the analysis to the time-dependent heat flow case via the frozen-time method in Subsection \ref{sec:Space-time estimates for low frequencies: heat flow case}.
  Finally in Subsection \ref{sec: proof full pro}, we complete the proof of Proposition \ref{equ: om}.

\subsection{Space-time estimates for high frequencies}\label{subsec: hingfre}
\begin{proposition}\label{Th xi small}
Let $\nu k^2\geq \n{\pa_{y}U}_{L_{t}^{\infty}L_{y}^{\infty}}$ and $(\om,\psi)$ be the solution to \eqref{equ: om}. Then
there exist constants $\nu_0>0$ and $\epsilon_0>0$ such that, for $\nu\in(0,\nu_0]$, $\epsilon\in[0,\epsilon_{0}]$, it holds that
\begin{align}\label{equ:high frequency,space-time,flow}
&|k|^2\|e^{\epsilon \nu^{\frac13}t}u\|^2_{L^2 L^2}+\nu |k|^2\|e^{\epsilon \nu^{\frac13}t}\om\|^2_{L^2 L^2}+\|e^{\epsilon \nu^{\frac13}t}\om\|^2_{L^\infty L^2}+|k|\|e^{\epsilon \nu^{\frac13}t}u\|^2_{L^\infty L^\infty}\\
\quad&\lesssim E^{\mathrm{in}}+\nu^{-1}\|e^{\epsilon \nu^{\frac13}t}(f_1,f_2)\|^2_{L^2 L^2}.\notag
\end{align}
\end{proposition}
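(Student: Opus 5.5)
In the high-frequency regime $\nu k^2\geq \n{\pa_{y}U}_{L^\infty L^\infty}$, viscous dissipation dominates the transport and nonlocal terms, so we can avoid resolvent theory and use a direct energy method. Set $\tilde\om=e^{\epsilon\nu^{\frac13}t}\om$, $\tilde\psi=e^{\epsilon\nu^{\frac13}t}\psi$ and $\tilde f_j=e^{\epsilon\nu^{\frac13}t}f_j$. Then $\tilde\om$ satisfies \eqref{equ: om} with the extra term $-\epsilon\nu^{\frac13}\tilde\om$. Since $\nu k^2\gtrsim c_0$ and $\nu\le\nu_0$, we have $\epsilon\nu^{\frac13}\le \epsilon\nu^{\frac13}\cdot C\nu k^2\ll \nu k^2$, so this term is absorbed by $\nu k^2\n{\tilde\om}_{L^2}^2$.

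\textbf{Vorticity energy estimate.} Test the equation with $\bar{\tilde\om}$ and take real parts. The transport term $ikU|\tilde\om|^2$ is purely imaginary and drops out. The dissipation contributes
\[
-\nu\,\mathrm{Re}\lra{\pa_y^2\tilde\om,\tilde\om}=\nu\n{\pa_y\tilde\om}^2-\nu\,\mathrm{Re}[\pa_y\tilde\om\bar{\tilde\om}]_0^1 .
\]
The boundary term is dangerous, because $\om$ satisfies no boundary condition under no-slip. To avoid it, test instead with the stream function. Taking the inner product with $-\bar{\tilde\psi}$ and using $\psi=\psi'=0$ at $y=0,1$, we get $\lra{\pa_t\tilde\om,-\tilde\psi}=\frac12\frac{d}{dt}\n{\tilde u}_{L^2}^2$ and $\nu\lra{(\pa_y^2-k^2)\tilde\om,\tilde\psi}=\nu\n{\tilde\om}_{L^2}^2$, with no boundary contributions since $\psi,\psi'$ vanish. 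The remaining terms are bounded as follows:
\begin{itemize}
\item $|k\lra{U\tilde\om,\tilde\psi}|\le |k|\,|\lra{U'\pa_y\tilde\psi,\tilde\psi}|\lesssim \n{U'}_{L^\infty}|k|^{-1}\n{\tilde u}^2$, which is at most $\frac{\nu}{4}\n{\tilde\om}^2$ because $\n{\tilde u}\le |k|^{-1}\n{\tilde\om}$ and $\n{U'}_{L^\infty}\le\nu k^2$.
\item The nonlocal term obeys $|k\lra{U''\tilde\psi,\tilde\psi}|\lesssim |k|^{-1}\n{\tilde u}^2$, which is absorbed in the same way; here we use that $\n{\pa_y^2U}_{L^\infty}$ is uniformly bounded by the $H^4$ heat-flow regularity.
\item The forcing satisfies $|\lra{ikf_1+\pa_yf_2,\tilde\psi}|\le \n{(f_1,f_2)}\n{\tilde u}\le \frac{\nu}{8}\n{\tilde\om}^2+C\nu^{-1}|k|^{-2}\n{f}^2$.
\end{itemize}
Integrating in time gives
\[
\n{\tilde u}_{L^\infty L^2}^2+\nu\n{\tilde\om}_{L^2L^2}^2\lesssim \n{u^{\mathrm{in}}}^2+\nu^{-1}|k|^{-2}\n{\tilde f}^2_{L^2L^2}.
\]
Combining this with $|k|\n{u}\le\n{\om}$ and the equivalence $\nu k^2\sim$ const yields the bounds for $|k|^2\n{\tilde u}^2_{L^2L^2}$ and $\nu|k|^2\n{\tilde\om}^2_{L^2L^2}$.

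\textbf{The $L^\infty L^2$ bound on $\om$.} This is the main obstacle, since the boundary term above is unavoidable. I would first try the decomposition $\om=\om_{Na}+\om_{bl}$. Here $\om_{Na}$ solves the problem with $\om_{Na}=0$ on the boundary, and is estimated by the standard energy method as above, now including the $\pa_y f_2$ term via $\nu\n{\pa_y\om_{Na}}^2$; its initial data is handled with $|k|^{-2}\n{\pa_y\om^{\mathrm{in}}}^2$. The boundary-layer part $\om_{bl}$ is of size $e^{-|k|\,\mathrm{dist}}$ near the walls, and should be controlled by the boundary values of $\pa_y\psi$ enforced through the no-slip condition, through $|k|^{1/2}\n{u}$ bounds together with the time integral of $\nu\n{\om}^2$. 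Alternatively, since $\nu k^2\gtrsim 1$ means the layer width $|k|^{-1}\lesssim\nu^{1/2}$, I could use a parabolic trace bound $\nu\int|\om(t,j)|^2$ from the $H^1$-type estimate: test with $-\pa_t\bar\psi$ or $(\pa_y^2-k^2)\bar\om$ composed with $(\pa_y^2-k^2)^{-1}$, i.e.\ use the Stokes-operator $L^2$ estimate. I would expect the latter, a Stokes semigroup estimate treating $U\pa_x$ and $U''\psi$ as perturbations bounded by $\nu k^2$, to close. It gives
\[
\n{\om}_{L^\infty L^2}^2\lesssim \n{\om^{\mathrm{in}}}^2+\nu^{-1}\n{f}^2,
\]
with $\n{\om^{\mathrm{in}}}\lesssim |k|^{-1}\n{\pa_y\om^{\mathrm{in}}}+\n{u^{\mathrm{in}}}_{H^1}$.

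\textbf{The $L^\infty L^\infty$ bound on $u$.} Finally, apply Gagliardo--Nirenberg in $y$:
\[
|k|\n{u}_{L^\infty}^2\lesssim |k|\n{u}\n{\pa_yu}\lesssim \n{\om}^2 ,
\]
using $|k|\n{u}\le\n{\om}$ and $\n{\pa_y u}\lesssim\n{\om}$ (the latter from elliptic regularity with $\psi\in H^2_0$). Taking the supremum in time and invoking the $L^\infty L^2$ vorticity bound gives the last term in \eqref{equ:high frequency,space-time,flow}.
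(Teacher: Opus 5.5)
Your Step 1 (testing with $-\overline{\psi}$) and your Agmon-type bound $|k|\|u\|_{L^\infty}^2\le 2|k|\|u\|_{L^2}\|\partial_y u\|_{L^2}\le 2\|\omega\|_{L^2}^2$ are what the paper does. There is one correction in Step 1. After taking real parts, the transport contribution is $\operatorname{Re}\big(ik\langle U'\partial_y\psi,\psi\rangle\big)$. It is bounded by $\|U'\|_{L^\infty}\|\partial_y\psi\|_{L^2}\,|k|\|\psi\|_{L^2}\le\frac12\|U'\|_{L^\infty}\|u\|_{L^2}^2\le\frac12\nu\|\omega\|_{L^2}^2$, and not by $\|U'\|_{L^\infty}|k|^{-1}\|u\|_{L^2}^2$. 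Because the hypothesis is $\nu k^2\ge\|\partial_yU\|_{L^\infty}$ with constant one, it is exactly this factor $\frac12$ that makes the absorption work. (The $U''$ term vanishes identically after taking real parts.)

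\textbf{The gap.} You correctly identify the bound on $\|e^{\epsilon\nu^{1/3}t}\omega\|_{L^\infty L^2}$ as the crux, but you do not prove it, and the routes you sketch do not close.
\begin{itemize}
\item \emph{Stokes semigroup perturbation.} Treating $ikU$ as a perturbation would require $|k|\|U\|_{L^\infty}\ll\nu k^2$, i.e. $|k|\gg\nu^{-1}$. The hypothesis controls only $\partial_yU$; $U$ itself has size one even after a Galilean shift. So this fails throughout the range $\nu^{-1/2}\lesssim|k|\ll\nu^{-1}$.
\item \emph{Testing with $-\overline{\partial_t\psi}$.} This does remove the wall terms and produces $\frac{\nu}{2}\frac{d}{dt}\|\omega\|_{L^2}^2+\|(\partial_y,k)\partial_t\psi\|_{L^2}^2$. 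However, $\operatorname{Re}\langle ikU\omega,\partial_t\psi\rangle$ then has no favorable structure. It is only controlled by $|k|\|u\|_{L^2}\|(\partial_y,k)\partial_t\psi\|_{L^2}$ (with constants depending on $\|U\|_{W^{1,\infty}}$), which gives $\frac{d}{dt}\|\omega\|_{L^2}^2\lesssim\nu^{-1}k^2\|u\|_{L^2}^2+\cdots$. Even with the full Step 1 bound $k^2\|u\|_{L^2L^2}^2\le\|\omega\|_{L^2L^2}^2\lesssim(\nu k^2)^{-1}(\cdots)$, this costs a factor $(\nu|k|)^{-2}$, which is unbounded unless $|k|\gtrsim\nu^{-1}$.
\item \emph{Boundary-layer splitting.} The decomposition $\omega=\omega_{Na}+\omega_{bl}$ is only heuristic.
\end{itemize}

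\textbf{The missing idea.} Test with the material derivative of the stream function, $F_1=\partial_t\psi+ikU\psi$.
\begin{itemize}
\item Since $\psi=\partial_y\psi=0$ at $y=0,1$, also $F_1=\partial_yF_1=0$ there.
\item It satisfies $(\partial_y^2-k^2)F_1=\partial_t\omega+ikU\omega+2ik\,\partial_yU\,\partial_y\psi+ik\,\partial_y^2U\,\psi$.
\item Rewrite $(\partial_t+ikU)\omega$ in the equation through this identity and pair with $-F_1$. The size-one transport term is then absorbed into the coercive term $\|(\partial_y,k)F_1\|_{L^2}^2$.
\item The viscous term $\nu\langle\omega,(\partial_y^2-k^2)F_1\rangle$ produces no boundary terms. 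It yields $\frac{\nu}{2}\frac{d}{dt}\|\omega\|_{L^2}^2$, the purely imaginary $\nu\langle\omega,ikU\omega\rangle$, and commutators involving only $\partial_yU$, $\partial_y^2U$, $\partial_y^3U$.
\item These commutators are handled with $\|u\|_{L^2}^2\le|k|^{-2}\|\omega\|_{L^2}^2\lesssim\nu\|\omega\|_{L^2}^2$. This gives $\nu\frac{d}{dt}\|\omega\|_{L^2}^2\lesssim\|(f_1,f_2)\|_{L^2}^2+\nu\|\omega\|_{L^2}^2$.
\item Integrating in time and using $\|\omega\|_{L^2L^2}^2\lesssim\nu k^2\|\omega\|_{L^2L^2}^2$, which Step 1 already controls, closes the $L^\infty L^2$ bound. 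Your $L^\infty L^\infty$ step then follows.
\end{itemize}
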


\begin{proof}
We divide the proof into two steps.\smallskip

\textbf{Step 1.} The estimates for $|k|^2\|e^{\epsilon \nu^{\frac13}t}u\|^2_{L^2 L^2}+\nu |k|^2\|e^{\epsilon \nu^{\frac13}t}\om\|^2_{L^2 L^2}$.\smallskip

Testing \eqref{equ: om} by $\psi$ and taking the real part, with $\nu k^2\geq \n{\pa_{y}U}_{L_{t}^{\infty}L_{y}^{\infty}}$, we get
\begin{align*}
\frac12\frac{d}{dt}\|u\|^2_{L^2}+\nu\|\om\|^2_{L^2}
=&-\operatorname{Re}\left(i k \int^1_{0}\pa_y U(t, y)\psi'\overline{\psi} dy\right)+\operatorname{Re}(\lan i k f_1,\psi\ran-\lan f_2,\pa_y \psi\ran)\\
\leq &\lrs{\frac12+\eta} \nu k^{2}\|(\pa_y,k)\psi\|^2_{L^2}+C_{\eta}(\nu k^{2})^{-1}\lrs{\|f_1\|^2_{L^2}+\|f_2\|^2_{L^2}},
\end{align*}
which yields that
\begin{align*}
&\frac{d}{dt}\|e^{\epsilon \nu^{\frac13}t} u\|^2_{L^2}+\nu\|e^{\epsilon \nu^{\frac13}t}\om\|^2_{L^2}\\
&\leq C_{\eta}(\nu k^2)^{-1}\|e^{\epsilon \nu^{\frac13}t}(f_1,f_2)\|^2_{L^2}+\lrs{\epsilon |k|^{-\frac23}+\frac12+\eta} \nu k^{2}
\|e^{\epsilon \nu^{\frac12}|k|} u\|^2_{L^2}.
\end{align*}

Noticing that
\begin{align}\label{uL2leqomL2}
\|(\pa_y,k)\psi\|^2_{L^2}=\|u\|^2_{L^2}\leq |k|^{-2}\|\om\|^2_{L^2}, 
\end{align}
by choosing $\eta\ll 1$
and then integrating with respect to $t$, we obtain
\begin{align}\label{nuxi2geq1 uL2omL2}
|k|^2\|e^{\epsilon \nu^{\frac13}t} u\|^2_{L^\infty L^2}+\nu |k|^{2} \|e^{\epsilon \nu^{\frac13}t}\om\|^2_{L^2L^2}\lesssim |k|^2\| u^{\mathrm{in}}\|^2_{L^2}+\nu^{-1}\|e^{\epsilon \nu^{\frac13}t}(f_1, f_2)\|^2_{L^2L^2}.
\end{align}

By \eqref{uL2leqomL2} and \eqref{nuxi2geq1 uL2omL2}, we also have
\begin{align}\label{nuxi2geq1 uLtinftyuLt2}
|k|^2\|e^{\epsilon \nu^{\frac13}t} u\|^2_{L^2L^2}\lesssim |k|^2\| u^{\mathrm{in}}\|^2_{L^2}+\nu^{-1}\|e^{\epsilon \nu^{\frac13}t}(f_1, f_2)\|^2_{L^2L^2}.
\end{align}

\textbf{Step 2.} The estimates for $\|e^{\epsilon \nu^{\frac13}t}\om\|^2_{L^\infty L^2}+|k|\|e^{\epsilon \nu^{\frac13}t}u\|^2_{L^\infty L^\infty}$.\smallskip

Let $F_1=\pa_t \psi+ik U\psi$. It satisfies
\begin{align*}
(\pa^2_y-k^2)F_1=&\pa_t(\pa^2_y-k^2)\psi+ik U(\pa^2_y-k^2)\psi+2ik \pa_y U\pa_y \psi+ik \pa^2_y U\psi\\
=&\pa_t \om+ik U\om+2ik \pa_y U\pa_y\psi+ik\pa^2_y U\psi,
\end{align*}
where $F_1|_{y=0,1}=\pa_y F_1|_{y=0,1}=0$. Integrating by parts and using the equation \eqref{equ: om}, we have
\begin{align*}
&\lan i k f_1, F_1\ran-\lan f_2,\pa_y F_1\ran=\lan -i k f_1-\pa_y f_2, -F_1\ran \\
&=\lan (\pa_t-\nu(\pa^2_y- k^2)+i k U)\om-ik \pa^2_y U \psi,-F_1\ran\\
&=\lan (\pa^2_y- k^2)F_1-2i k\pa_y U \pa_y\psi-\nu(\pa^2_y- k^2)\om-2ik \pa^2_y U\psi,-F_1\ran\\
&=\|(\pa_y,k) F_1\|^2_{L^2}+\lan 2i k \pa_y U \pa_y\psi, F_1\ran\\
&\quad+\nu \lan\om,\pa_t\om+i k U \om+2i k\pa_y U \pa_y\psi +ik\pa^2_y U\psi\ran+2ik\lan \pa^2_y U\psi,F_1\ran.
\end{align*}
Taking the real part and then using Cauchy-Schwarz inequality, we obtain
\begin{align}\label{dtomL2+FL2}
&\frac{\nu}{2}\frac{d}{dt}\|\om\|^2_{L^2}+\|(\pa_y,k) F_1\|^2_{L^2}\\
&\leq 2\nu | k||\lan \om, \pa_y U \pa_y\psi\ran|+\nu|k|\lan \pa_y \psi, \pa^3_y U\psi\ran+2| k||\lan \pa_yU \pa_y\psi+\pa^2_yU\psi+f_1, F_1\ran|+|\lan f_2,\pa_y F_1\ran|\notag\\
&\leq\nu \|\om\|^2_{L^2}+\nu \|\pa^3_y U\|_{L^\infty}\|u\|^2_{L^2}+(\nu k^2+4)\|\pa_y U\|^2_{L^\infty}\|\pa_y\psi\|^2_{L^2}+4\|\pa^2_yU\|^2_{L^\infty}\|\psi\|^2_{L^2}\notag\\
&\quad+\frac34|k|^2\|F_1\|^2_{L^2}+\frac14\|\pa_y F_1\|^2_{L^2}+4\|f_1\|^2_{L^2}+\|f_2\|^2_{L^2}.\notag
\end{align}
By \eqref{uL2leqomL2}, we have
\begin{align*}
\nu \frac{d}{dt}\|\om\|^2_{L^2}\lesssim \|(f_1,f_2)\|^2_{L^2}+\nu \|\om\|^2_{L^2},
\end{align*}
which implies that
\begin{align*}
\frac{d}{dt}\|e^{\epsilon \nu^{\frac13}t} \om\|^2_{L^2}\lesssim \nu^{-1}\|e^{\epsilon \nu^{\frac13}t} (f_1,f_2)\|^2_{L^2}+(1+\epsilon\nu^{\frac13}) \|e^{\epsilon \nu^{\frac13}t} \om\|^2_{L^2}.
\end{align*}
Thanks to \eqref{nuxi2geq1 uL2omL2}, with $\nu k^{2}\gtrsim 1$, we get
\begin{align*}
\|e^{\epsilon \nu^{\frac13}t} \om\|^2_{L^\infty L^2}
\lesssim & \|\om^{\mathrm{in}}_{k}\|^2_{L^2}+\nu^{-1}\|e^{\epsilon \nu^{\frac13}t} (f_1, f_2)\|^2_{L^2L^2}+\nu|k|^{2} \|e^{\epsilon \nu^{\frac13}t} \om\|^2_{L^2L^2}\\
\lesssim & \|\om^{\mathrm{in}}_{k}\|^2_{L^2}+\nu^{-1}\|e^{\epsilon \nu^{\frac13}t} (f_1, f_2)\|^2_{L^2L^2}.
\end{align*}

Due to $\|u\|_{L^\infty}\leq |k|^{-\frac12}\|\om\|_{L^2}$, we also have
\begin{align*}
|k|\|e^{\epsilon \nu^{\frac13}t}u\|^2_{L^\infty L^\infty}\lesssim& \|\om^{\mathrm{in}}_{k}\|^2_{L^2}+ \nu^{-1}\|e^{\epsilon \nu^{\frac13}t}(f_1,f_2)\|^2_{L^2L^2}.
\end{align*}

Therefore, we have completed the proof of Proposition \ref{Th xi small}.
\end{proof}
\subsection{Space-time estimates for low frequencies: frozen-time case}\label{subsec: lowfre}
We first consider the frozen-coefficient equation
\begin{equation}\label{equ: om const}
\left\{
\begin{aligned}
&\pa_t \om -\nu(\pa^2_y- k^2)\om +i k V\om -ik V''\psi =-i k f_1-\pa_y f_2-f_3-f_4, \\
&\om=(\pa^2_y- k^2)\psi,\quad \psi(0)=\psi(1)=\psi'(0)=\psi'(1)=0,\\
&\om(0,y)=\om_{k}^{\mathrm{in}}(y),
\end{aligned}
\right.
\end{equation}
where the steady flow satisfies
\begin{align*}
V(y)\in H^{4}(0,1), \quad \inf_{y\in(0,1)}\pa_{y}V\geq c_{0}>0,\,\, \text{and}\,\, V'' >0 \,\,\text{or}\,\, V''<0.
\end{align*}
\begin{lemma}\label{lemma: omconst}
Let $\nu k^2\leq \|V'\|_{L^\infty}$ and $(\om,\psi)$ be the solution to \eqref{equ: om const}. Then there exist constants $\nu_0>0$ and $\epsilon_1>0$ such that, for $\nu\in(0,\nu_0]$, $\epsilon\in[0,\epsilon_{1}]$, we have
\begin{align}\label{esti: omconst}
&|k|^2\bn{e^{\epsilon \nu^{\frac13}t} u}^2_{L^2L^2}+\nu^{\frac12} | k| \|e^{\epsilon \nu^{\frac13}t}\om\|^2_{L^2L^2}+\nu^{\frac13}|k|^{\frac23}\|e^{\epsilon \nu^{\frac13}t}\rho^{\frac12}_k\om\|^2_{L^2L^2}\\
&\lesssim E^{\mathrm{in}}+ \nu^{-1}\|e^{\epsilon \nu^{\frac13} t}(f_1, f_2)\|^2_{L^2 L^2}+\nu^{-\frac13}|k|^{-\frac23}\|e^{\epsilon \nu^{\frac13} t}f_3\|^2_{L^2 L^2}+\nu^{-\frac{1}{6}}|k|^{-\frac73}\|e^{\epsilon \nu^{\frac13} t}f_4\|^2_{L^2 H_{k}^{1}}.\notag
\end{align}
\end{lemma}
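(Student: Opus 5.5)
We will derive the space-time bounds in Lemma \ref{lemma: omconst} from the resolvent estimates of Proposition \ref{lemma:non-slip boundary,resolvent} by taking the Fourier transform in time. The approach follows the scheme of \cite{Chen-Li-Wei-Zhang}. By linearity we split $\om=\om_{I}+\om_{H}$. Here $\om_{I}$ solves \eqref{equ: om const} with zero initial data and forcing $-ikf_1-\pa_yf_2-f_3-f_4$. The part $\om_{H}$ solves the homogeneous equation with data $\om^{\mathrm{in}}_k$.

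For the inhomogeneous part, we first multiply by $e^{\epsilon\nu^{\frac13}t}$. The weighted vorticity $e^{\epsilon\nu^{\frac13}t}\om_I$ then satisfies the same equation with the extra zeroth-order term $-\epsilon\nu^{\frac13}$. Since $\nu k^2\lesssim 1$, we have $\epsilon\nu^{\frac13}\le \epsilon_1(\nu k^2)^{\frac13}$ whenever $|k|\ge1$. So this term can be absorbed into $o(\nu,k)$ once $\epsilon$ is chosen small. We extend the forcing by zero for $t<0$ and take the Fourier transform in $t$, with dual variable $\tau$. For each fixed $\tau$ this gives the Orr--Sommerfeld problem \eqref{equ:non-slip,OS} with $\la=-\tau/k$.

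We apply \eqref{esti: u,wL2FH-1} to $F=-ikf_1-\pa_yf_2$, using $\|F\|_{H^{-1}_k}\lesssim\|(f_1,f_2)\|_{L^2}$. We apply \eqref{esti: u,wL2FL^2} to $f_3$ and \eqref{esti: uwL2FH1} to $f_4$. Each estimate is squared and integrated in $\tau$, and Plancherel then yields the $\nu^{\frac12}|k|\|\om\|^2_{L^2L^2}$ and $\nu^{\frac13}|k|^{\frac23}\|\rho_k^{\frac12}\om\|^2_{L^2L^2}$ terms with the stated right-hand weights. For instance, $(\nu^{-\frac16}|k|^{-\frac13})^2\nu^{-\frac12}|k|^{-1}$ compared against $\nu^{\frac12}|k|$ produces the factor $\nu^{-\frac13}|k|^{-\frac23}$ in front of $\|f_3\|^2$. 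The inviscid damping term $|k|^2\|u\|^2_{L^2L^2}$ is handled the same way. We use the velocity resolvent bounds \eqref{esti: wL2}--\eqref{esti: wH-1} for $u_{Na}$. For the boundary-layer parts $c_jw_j$, we pair the coefficient bounds of Lemma \ref{lemma: c1,c2 FL2} with $\|u_j\|_{L^2}\lesssim |k|^{-1}\|w_j\|$-type bounds. As an alternative, we can bound $\|u\|_{L^2}^2=|\lan w,\phi\ran|$ by duality, since $\|(\pa_y,k)\phi\|_{L^2}\le |k|^{-1}\|w\|$ fails to be sharp only near the boundary.

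For the homogeneous part, we follow the standard trick. We set $\om_H=\tilde\om-\chi(t)$-type corrections, or more concretely we write $\om_H=e^{-\nu^{\frac13}t}\om^{\mathrm{in}}_k$-modified plus a remainder. The remainder then satisfies the inhomogeneous problem with forcing $g=(\pa_t-\nu(\pa_y^2-k^2)+ikV)(\text{ansatz})-ikV''(\pa_y^2-k^2)^{-1}(\text{ansatz})$. The ansatz must respect the no-slip condition, so the natural choice is on the level of the stream function. We take $\psi_{a}(t)=e^{-t}\psi^{\mathrm{in}}$, which has the correct boundary conditions, and let $\om_a=(\pa_y^2-k^2)\psi_a$. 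The residual forcing then contains $\nu(\pa_y^2-k^2)\om^{\mathrm{in}}$, which we write in divergence form as $\pa_y(\nu\pa_y\om^{\mathrm{in}})+\dots$, together with $ikV\om^{\mathrm{in}}$ and lower-order terms. We feed these into the $f_2$ and $f_3$ slots. We expect $\nu^{-1}\|\nu\pa_y\om^{\mathrm{in}}\|^2=\nu\|\pa_y\om^{\mathrm{in}}\|^2$ and $\nu^{-\frac13}|k|^{-\frac23}\|kV\om^{\mathrm{in}}\|^2$. The latter is too large. We will therefore instead put $ikV\om_a$ into the $f_1$ slot, as $ik f_1$ with $f_1=V\om_a$. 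This gives $\nu^{-1}\|\om^{\mathrm{in}}\|^2$, which is still bad.

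This is the main obstacle: controlling the initial data contribution by $E^{\mathrm{in}}=|k|^{-2}\|\pa_y\om^{\mathrm{in}}\|^2+\|u^{\mathrm{in}}\|^2_{H^1}$. We plan to use $f_4$ in $H^1_k$. We write $ikV\om_a$ as $f_4$, with $\|kV\om_a\|_{H^1_k}\lesssim |k|\|(\pa_y,k)\om^{\mathrm{in}}\|$. This yields $\nu^{-\frac16}|k|^{-\frac13}\|(\pa_y,k)\om^{\mathrm{in}}\|^2$, which is still not bounded by $E^{\mathrm{in}}$. So instead we will use the time scale $e^{-\nu^{\frac13}|k|^{\frac23}t}$, or better a direct energy argument. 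Testing the homogeneous equation against $\psi$ gives $\|u\|_{L^\infty L^2}\lesssim\|u^{\mathrm{in}}\|$. We then combine this with the resolvent bound applied to $\pa_t$-differentiated quantities. If this fails, we fall back on an energy estimate for $\om$ with the weight $\rho_k$, plus an interpolation argument, to close in terms of $E^{\mathrm{in}}$.
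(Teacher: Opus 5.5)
\textbf{Gap: the homogeneous part $\om_H$ is never closed.} Your treatment of $\om_I$ is the paper's argument: absorb $-\epsilon\nu^{\frac13}$ into $o(\nu,k)$, take the Fourier transform in $t$, apply Proposition~\ref{lemma:non-slip boundary,resolvent}, and use Plancherel. Your bookkeeping of the weights there is correct. The failure is in $\om_H$.

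Your ansatz $\psi_a=e^{-t}\psi^{\mathrm{in}}$ respects no-slip but ignores transport. Its residual therefore contains $ikV\om_a$, of size $|k|\|\om^{\mathrm{in}}_k\|_{L^2}$, and, as you noticed yourself, none of the slots $f_1,f_3,f_4$ absorbs it at a cost bounded by $E^{\mathrm{in}}$.

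The fallback does not work either. Testing the homogeneous equation against $\psi$ leaves $-\operatorname{Re}\big(ik\int_0^1 V'\pa_y\psi\,\overline{\psi}\,dy\big)$, which has no sign. It can be absorbed by $\nu\|\om\|^2_{L^2}$ only when $\nu k^2\gtrsim\|V'\|_{L^\infty}$, which is exactly how it is used in Proposition~\ref{Th xi small}. In the regime $\nu k^2\le\|V'\|_{L^\infty}$ of this lemma it gives only Gronwall growth. Indeed, already for Couette flow the Orr mechanism makes $\sup_t\|u(t)\|_{L^2}/\|u^{\mathrm{in}}\|_{L^2}$ unbounded as $\nu\to0$, so the bound you claim, with a $\nu$-independent constant, is false. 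Even if it held, an $L^\infty_t$ bound would not give $\nu^{\frac12}|k|\|e^{\epsilon\nu^{\frac13}t}\om_H\|^2_{L^2L^2}$ or $\nu^{\frac13}|k|^{\frac23}\|e^{\epsilon\nu^{\frac13}t}\rho_k^{\frac12}\om_H\|^2_{L^2L^2}$. The remaining suggestions (resolvent bounds for $\pa_t$-differentiated quantities, a $\rho_k$-weighted energy estimate with interpolation) are not specified enough to carry the argument.

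\textbf{Missing idea: build the ansatz from the inviscid dynamics so the transport term cancels exactly.} The paper proceeds in three pieces.
\begin{enumerate}
\item Let $\om_{H,0}$ solve the linearized Euler equation $\pa_t\om_{H,0}+ikV\om_{H,0}-ikV''(\pa_y^2-k^2)^{-1}\om_{H,0}=0$ with data $\om^{\mathrm{in}}_k$. Set $\om_{H,1}=e^{-\nu k^2((V')^2t^3/3+t)}\om_{H,0}$, which builds in the enhanced dissipation. The error of $\om_{H,1}$ is then only viscous plus a commutator: $-\nu(\pa_y\om_{1,1}-iktV''\om_{H,1}-iktV'\om_{1,1})+ikV''\psi_{1,1}$, where $\om_{1,1}=(\pa_y+iktV')\om_{H,1}$.
\item A piece $\om_{H,2}$ absorbs this error through your inhomogeneous estimate, with $f_2\sim\nu\om_{1,1}$, $f_3\sim\nu kt(V''\om_{H,1}+V'\om_{1,1})$ and $f_4\sim kV''\psi_{1,1}$. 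The bounds \eqref{omH1L2L2}--\eqref{u1HL2} make these contributions $\lesssim|k|^{-2}\|\om^{\mathrm{in}}_k\|^2_{H^1}$. They come from the linear inviscid-damping theory of \cite{Chen-Wei-Zhang1}, and this is where the convexity (no embedded eigenvalues) is used.
\item Since $\psi_{H,1}$ satisfies only the Dirichlet condition, a third piece $\om_{H,3}$ restores no-slip. It solves the homogeneous problem with zero data and $\lan\om_{H,3}+\om_{H,1},\sinh(ky)\ran=\lan\om_{H,3}+\om_{H,1},\sinh(k(1-y))\ran=0$. After the time Fourier transform it equals $-c_1(\la)w_{3,1}-c_2(\la)w_{3,2}$. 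It is controlled by \eqref{w1L2}--\eqref{weightL2} together with $\|(1+|\la+kV(0)|)c_1\|_{L^2}+\|(1+|\la+kV(1)|)c_2\|_{L^2}\lesssim|k|^{-\frac32}\|\om^{\mathrm{in}}_k\|_{H^1}$.
\end{enumerate}

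\textbf{The velocity term.} For $|k|^2\|e^{\epsilon\nu^{\frac13}t}u\|^2_{L^2L^2}$ the paper simply cites \cite[Propositions 6.1 and 6.2]{Chen-Wei-Zhang1}. Your proposed pairing of Lemma~\ref{lemma: c1,c2 FL2} with $\|u_j\|_{L^2}\lesssim|k|^{-1}\|w_j\|_{L^2}$ loses a factor $(\nu k^2)^{-\frac14}$ when $\la$ is close to $V(0)$ or $V(1)$. A sharper velocity bound for the boundary-layer correctors is needed there, and the homogeneous contribution to this term again rests on the linearized Euler analysis.
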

The estimates for $|k|^2\bn{e^{\epsilon \nu^{\frac13}t} u}^2_{L^2L^2}$ have been
established in \cite[Propositions 6.1 and 6.2]{Chen-Wei-Zhang1} that
\begin{align*}
|k|^2\| e^{\epsilon \nu^{\frac13} t} u\|^2_{L^2L^2}
\lesssim & E^{\mathrm{in}}+\nu^{-1}\|e^{\epsilon \nu^{\frac13}t}(f_1,f_2)\|^2_{L^2L^2}\\
&+\nu^{-\frac13}|k|^{-\frac23}\|e^{\epsilon \nu^{\frac13}t}f_3\|^2_{L^2 L^2} +|k|^{-2}\|e^{\epsilon \nu^{\frac13}t}f_4\|^2_{L^2 H_{k}^{1}}.
\end{align*}
We are left to bound $\nu^{\frac12} | k| \|e^{\epsilon \nu^{\frac13}t}\om\|^2_{L^2L^2}+\nu^{\frac13}|k|^{\frac23}\|e^{\epsilon \nu^{\frac13}t}\rho^{\frac12}_k\om\|^2_{L^2L^2}$, whose proof has been divided into the following Lemmas \ref{Prop: inhomo} and \ref{Prop homo pro}.

We first decompose the solution to \eqref{equ: om const} into the inhomogeneous and homogeneous parts:
$$\om=\om_{I}+\om_{H},$$
where the inhomogeneous part $\om_{I}$ satisfies
\begin{equation}\label{equ: omI}
\left\{
\begin{aligned}
&\pa_t \om_{I}-\nu(\pa^2_y- k^2)\om_{I}+i kV\om_{I}-ik V''\psi_{I}=-i k f_1-\pa_y f_2-f_3-f_4, \\
&\om_{I}=(\pa^2_y- k^2)\psi_{I},\quad \psi_{I}(0)=\psi_{I}(1)=\psi'_{I}(0)=\psi'_{I}(1)=0,\\
&\om_{I}(0,y)=0,
\end{aligned}
\right.
\end{equation}
and the homogeneous part $\om_{H}$ satisfies
\begin{equation}\label{equ: omH}
\left\{
\begin{aligned}
&\pa_t \om_{H}-\nu(\pa^2_y- k^2)\om_{H}+i kV\om_{H}-ik V''\psi_{H}=0,\\
&\om_{H}=(\pa^2_y- k^2)\psi_{H},\quad \psi_{H}(0)=\psi_{H}(1)=\psi'_{H}(0)=\psi'_{H}(1)=0,\\
&\om_{H}(0,y)=\om_{k}^{\mathrm{in}}(y).
\end{aligned}
\right.
\end{equation}

\begin{lemma}\label{Prop: inhomo}
Let $\nu k^2\leq \|V'\|_{L^\infty}$ and $(\om_{I},\psi_{I})$ be the solution to \eqref{equ: omI}.
Then
there exist constants $\nu_0>0$ and $\epsilon_1>0$ such that, for $\nu\in(0,\nu_0]$, $\epsilon\in[0,\epsilon_{1}]$, we have
\begin{align}\label{esti: inhomo}
&\nu^{\frac12} | k| \|e^{\epsilon \nu^{\frac13}t}\om_{I}\|^2_{L^2L^2}+\nu^{\frac13}|k|^{\frac23}\|e^{\epsilon \nu^{\frac13}t}\rho^{\frac12}_k\om_{I}\|^2_{L^2L^2}\\
\lesssim& \nu^{-1}\|e^{\epsilon \nu^{\frac13}t}(f_1, f_2)\|^2_{L^2 L^2}+\nu^{-\frac13}|k|^{-\frac23}\|e^{\epsilon \nu^{\frac13}t}f_3\|^2_{L^2 L^2} +\nu^{-\frac{1}{6}}|k|^{-\frac73}\|e^{\epsilon \nu^{\frac13}t}f_4\|^2_{L^2 H_{k}^{1}}.\notag
\end{align}
\end{lemma}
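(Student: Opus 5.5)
The plan is to pass from the resolvent estimates of Proposition \ref{lemma:non-slip boundary,resolvent} to space-time bounds via the Fourier transform in time (Plancherel). First absorb the weight: set $\tilde\om=e^{\epsilon\nu^{\frac13}t}\om_I$ and $\tilde\psi=e^{\epsilon\nu^{\frac13}t}\psi_I$, and likewise $\tilde f_j=e^{\epsilon\nu^{\frac13}t}f_j$. Then $\tilde\om$ solves \eqref{equ: omI} with forcing $-ik\tilde f_1-\pa_y\tilde f_2-\tilde f_3-\tilde f_4$, zero data, and an extra term $-\epsilon\nu^{\frac13}\tilde\om$. Since $\nu k^2\le\|V'\|_{L^\infty}$ and $\nu^{\frac13}\le \nu^{\frac13}|k|^{\frac23}$, this extra term is of the form $o(\nu,k)\tilde\om$ with $|o(\nu,k)|\le\epsilon_1(\nu k^2)^{\frac13}$ once $\epsilon\le\epsilon_1$. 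Extend $\tilde\om$ and the forcing by zero for $t<0$, which is legitimate since $\om_I(0)=0$, and take the Fourier transform in $t$ with dual variable $-k\la$. Then, for each $\la\in\R$, $\hat\om(\la,\cdot)$ solves \eqref{equ:non-slip,OS} with $F=-ik\hat f_1-\pa_y\hat f_2-\hat f_3-\hat f_4$ and the no-slip conditions. Some care is needed to justify that the transform exists (for instance by truncating in time or by a priori exponential decay from energy estimates); I regard this as routine.

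By linearity, split $\hat\om=\hat\om^{(12)}+\hat\om^{(3)}+\hat\om^{(4)}$ according to which part of the forcing each piece solves with.
\begin{itemize}
\item For $F=-ik\hat f_1-\pa_y\hat f_2$ we have $\|F\|_{H_k^{-1}}\lesssim\|(\hat f_1,\hat f_2)\|_{L^2}$, so \eqref{esti: u,wL2FH-1} gives $\nu^{\frac12}|k|\|\hat\om^{(12)}\|_{L^2}^2+\nu^{\frac13}|k|^{\frac23}\|\rho_k^{\frac12}\hat\om^{(12)}\|_{L^2}^2\lesssim\nu^{-1}\|(\hat f_1,\hat f_2)\|_{L^2}^2$.
\item For $\hat f_3\in L^2$, squaring \eqref{esti: u,wL2FL^2} gives the right-hand side $\nu^{-\frac13}|k|^{-\frac23}\|\hat f_3\|_{L^2}^2$.
\item For $\hat f_4\in H^1_k$, squaring \eqref{esti: uwL2FH1} gives $\nu^{-\frac16}|k|^{-\frac73}\|\hat f_4\|_{H^1_k}^2$.
\end{itemize}
All three bounds hold uniformly in $\la$. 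Integrating in $\la$ and applying Plancherel in $t$ (with the factor $|k|$ from the change of variable cancelling on both sides) yields exactly \eqref{esti: inhomo}.

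The main point to check is the hypothesis on $o(\nu,k)$. The exponential weight produces a real shift $-\epsilon\nu^{\frac13}$, and this shift must stay within the smallness regime $\epsilon_1(\nu k^2)^{\frac13}$ allowed by Proposition \ref{lemma:non-slip boundary,resolvent}. This works for $|k|\ge1$, which holds since $k\neq0$ here. A second check is that the resolvent estimates are uniform in $\la\in\R$, including $\la$ far from the range of $V$; the proposition is stated for all real $\la$, so this is covered. No other obstacle is expected.
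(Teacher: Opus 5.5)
Your proposal is correct and is essentially the paper's proof. The paper also Fourier-transforms $e^{\epsilon\nu^{\frac13}t}\om_I$ and $e^{\epsilon\nu^{\frac13}t}f_j$ in time over $t>0$, applies the three no-slip resolvent bounds of Proposition \ref{lemma:non-slip boundary,resolvent} (for $H^{-1}_k$, $L^2$ and $H^1_k$ forcing), and concludes by Plancherel; your explicit check that the shift $-\epsilon\nu^{\frac13}$ satisfies the $o(\nu,k)$ smallness hypothesis (using $|k|\geq 1$) is a detail the paper leaves implicit.
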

\begin{proof}
First, we set
\begin{align*}
&w_I(\la, k,y)=:\int^{+\infty}_0 e^{\epsilon \nu^{\frac13}t} \om_{I}(t, k,y) e^{-it\la} dt,\\
&F_{j}(\la, k,y)=:\int^{+\infty}_0 e^{\epsilon \nu^{\frac13}t}f_{j}(t, k,y)e^{-it\la} dt,\quad  j=1,2,3,4.
\end{align*}
Then $w_{I}$ satisfies the OS equation with no-slip boundary condition. Therefore, we can appeal to the resolvent estimates in Proposition \ref{lemma:non-slip boundary,resolvent} to obtain
\begin{align*}
\nu^{\frac12}|k|\|w_{I}\|^2_{L^2}+\nu^{\frac13}|k|^{\frac23}\|\rho^{\frac12}_k w_{I}\|^2_{L^2}\lesssim \nu^{-1}\|(F_1, F_2)\|^2_{L^2}+\nu^{-\frac13}|k|^{-\frac23}\|F_3\|^2_{ L^2}+\nu^{-\frac16}|k|^{-\frac73}\| F_4\|^2_{H_{k}^{1}},
\end{align*}
which, along with the Plancherel's formula, implies that
\begin{align*}
&\nu^{\frac12} | k| \|e^{\epsilon \nu^{\frac13}t}\om_{I}\|^2_{L^2L^2}+\nu^{\frac13}|k|^{\frac23}\|e^{\epsilon \nu^{\frac13}t}\rho^{\frac12}_k\om_{I}\|^2_{L^2L^2}\\
&\lesssim \nu^{-1}\|e^{\epsilon \nu^{\frac13}t}(f_1, f_2)\|^2_{L^2 L^2}+\nu^{-\frac13}|k|^{-\frac23}\|e^{\epsilon \nu^{\frac13}t}f_3\|^2_{L^2 L^2} +\nu^{-\frac{1}{6}}|k|^{-\frac73}\|e^{\epsilon \nu^{\frac13}t}f_4\|^2_{L^2 H_{k}^{1}}.
\end{align*}
This completes the proof of Lemma \ref{Prop: inhomo}.
\end{proof}

Next, we get into the analysis of the homogeneous equation \eqref{equ: omH}.
\begin{lemma}\label{Prop homo pro}
Let $\nu k^2\leq \|V'\|_{L^\infty}$ and
$(\om_{H},\psi_{H})$ be the solution to \eqref{equ: omH}. Then
there exist constants $\nu_0>0$ and $\epsilon_1>0$ such that, for $\nu\in(0,\nu_0]$, $\epsilon\in[0,\epsilon_{1}]$, we have
\begin{equation}\label{homo om}
\begin{aligned}
\nu^{\frac12}|k|\|e^{\epsilon \nu^{\frac13}t} \om_{H}\|^2_{L^2L^2}+\nu^{\frac13}|k|^{\frac23}\|e^{\epsilon \nu^{\frac13}t}\rho^{\frac12}_k\om_{H}\|^2_{L^2L^2}
\lesssim E^{\mathrm{in}}.
\end{aligned}
\end{equation}
\end{lemma}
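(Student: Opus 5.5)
The plan is to reduce the homogeneous problem \eqref{equ: omH} to an inhomogeneous problem with zero initial data, so that Lemma \ref{Prop: inhomo} does the work. Since the resolvent estimates of Proposition \ref{lemma:non-slip boundary,resolvent} only control forcing terms, the initial datum has to be converted into a forcing supported near $t=0$ or into a smooth-in-time forcing.

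Concretely, I would use a time cut-off. Let $\chi(t)$ be smooth with $\chi=1$ on $[0,1]$ and $\chi=0$ on $[2,\infty)$. Set
\begin{align*}
\om_H=\chi(\nu^{\frac13}|k|^{\frac23}t)\,\om_{H}+\bigl(1-\chi(\nu^{\frac13}|k|^{\frac23}t)\bigr)\om_H=:\om_{H,1}+\om_{H,2}.
\end{align*}
Alternatively, and more in the spirit of \cite{Chen-Wei-Zhang1}, one can take the cut-off at time scale $\nu^{-\frac13}|k|^{-\frac23}$ or subtract an explicit inviscid part. The function $\om_{H,2}$ has zero initial data and solves \eqref{equ: omI} with forcing $-\chi'\,\nu^{\frac13}|k|^{\frac23}\om_H$. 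This forcing is naturally put in the $f_3$ slot or, better, written in divergence form $-ik f_1-\pa_y f_2$ via $\om_H=(\pa_y^2-k^2)\psi_H$: take $f_1\sim \nu^{\frac13}|k|^{\frac23}\chi' k\psi_H$ and $f_2\sim\nu^{\frac13}|k|^{\frac23}\chi'\pa_y\psi_H$, so that $\|(f_1,f_2)\|_{L^2}\lesssim \nu^{\frac13}|k|^{\frac23}|\chi'|\,\|u_H\|_{L^2}$. Lemma \ref{Prop: inhomo} then bounds the $\om_{H,2}$ contribution by
\begin{align*}
\nu^{-1}\nu^{\frac23}|k|^{\frac43}\int_{t\lesssim \nu^{-\frac13}|k|^{-\frac23}}\|u_H\|_{L^2}^2\,dt\lesssim \nu^{-\frac13}|k|^{\frac43}\,\nu^{-\frac13}|k|^{-\frac23}\|u^{\mathrm{in}}\|_{L^2}^2.
\end{align*}
Here I use the energy-type bound $\|u_H(t)\|_{L^2}\lesssim\|u^{\mathrm{in}}\|_{L^2}$ on short times, obtained from the $\psi$-energy identity as in Step 1 of Proposition \ref{Th xi small}, with Gronwall over time $O(\nu^{-\frac13}|k|^{-\frac23})$. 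This is not yet sharp, so the scaling must be rechecked. The $f_3$ slot instead gives $\nu^{-\frac13}|k|^{-\frac23}\nu^{\frac23}|k|^{\frac43}\int\|\om_H\|^2\,dt$, which needs the short-time bound $\|\om_H(t)\|_{L^2}^2\lesssim \|\om^{\mathrm{in}}\|^2+\dots$. Near the boundary this is unavailable, because a boundary layer forms.

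For $\om_{H,1}$, i.e. short times $t\lesssim \nu^{-\frac13}|k|^{-\frac23}$, the required quantity is $\nu^{\frac12}|k|\int_0^{T}\|\om_H\|^2$ with $T=\nu^{-\frac13}|k|^{-\frac23}$. I would estimate it through the decomposition $\om_H=\om_{H}^{Na}+$ boundary correction. Here $\om_H^{Na}$ solves the Navier-slip problem with datum $\om^{\mathrm{in}}$; its $L^\infty L^2$ norm is bounded by $\|\om^{\mathrm{in}}\|_{L^2}+|k|^{-1}\|\pa_y\om^{\mathrm{in}}\|_{L^2}$ via an energy estimate, where the nonlocal term $kV''\psi$ is controlled by $|k|\|u\|$ and Gronwall is used on time $T$. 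The boundary correction is handled by the same resolvent argument as for $\om_I$. Then $\nu^{\frac12}|k| T=\nu^{\frac16}|k|^{\frac13}\lesssim1$, which is acceptable since $\nu k^2\lesssim1$. Similarly the weighted term $\nu^{\frac13}|k|^{\frac23}T=1$.

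The main obstacle is the boundary mismatch: $\om^{\mathrm{in}}$ does not satisfy the Navier-slip condition, and the no-slip dynamics creates a layer of width $\nu^{\frac13}|k|^{-\frac13}$ in which $\|\om_H\|_{L^2}$ may grow like $\nu^{-\frac14}$ (compare \eqref{w1L2}). This is exactly why $E^{\mathrm{in}}$ contains $|k|^{-2}\|\pa_y\om^{\mathrm{in}}\|^2$ and why the weight $\rho_k$ appears. To handle it I would write $\om_H=\om_H^{Na}+\om_b$, where $\om_b$ solves the no-slip problem with zero datum and forcing generated by the boundary flux. Applying Proposition \ref{lemma:non-slip boundary,resolvent} in its $H^{1}_k$ form \eqref{esti: uwL2FH1} to $\om_b$, with $F=f_4$-type forcing built from $\pa_y\om^{\mathrm{in}}$ times the time cut-off, gives the factor $\nu^{-\frac16}|k|^{-\frac73}\cdot\nu^{\frac13}|k|^{\frac23}$-scaled contributions. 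These should reduce to $E^{\mathrm{in}}$, and checking that the exponents close is where I expect most of the work.
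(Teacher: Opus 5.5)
There is a genuine gap: the time cut-off reduction does not close, and the places where you flag trouble are real obstructions, not bookkeeping.

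\textbf{The cut-off piece.} For $(1-\chi)\om_H$, Lemma \ref{Prop: inhomo} turns the divergence-form forcing $\nu^{\frac13}|k|^{\frac23}\chi'\om_H$ into the cost $\nu^{-\frac13}|k|^{\frac43}\int_T^{2T}\|u_H\|_{L^2}^2\,dt$, where $T=\nu^{-\frac13}|k|^{-\frac23}$. This is $\lesssim E^{\mathrm{in}}$ only if $\|u_H(t)\|_{L^2}^2\lesssim (|k|t)^{-2}E^{\mathrm{in}}$ for $t\sim T$. That is sharp pointwise-in-time inviscid damping for the viscous no-slip flow, and nothing in the paper provides it; only the time-integrated bound $|k|\|u\|_{L^2L^2}$ is available. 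With $\|u_H\|\lesssim\|u^{\mathrm{in}}\|$ you lose $\nu^{-\frac23}|k|^{\frac23}$. The $f_3$ slot instead needs $\nu^{\frac13}|k|^{\frac23}\int_0^{2T}\|\om_H\|_{L^2}^2\,dt\lesssim E^{\mathrm{in}}$. This is strictly stronger than the lemma's own conclusion on that interval, since $\nu^{\frac13}|k|^{\frac23}\geq \nu^{\frac12}|k|$ when $\nu k^2\lesssim 1$, and it cannot be expected because of the boundary layer.

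\textbf{The short-time bounds.} The bounds you invoke on $[0,2T]$ are not available. The energy argument of Step 1 in Proposition \ref{Th xi small} works only because $\nu k^2\geq \|\pa_yU\|_{L^\infty}$ lets $\nu\|\om\|_{L^2}^2\geq \nu k^2\|u\|_{L^2}^2$ absorb $\re\big(ik\int V'\pa_y\psi\bar\psi\big)$. In the regime $\nu k^2\leq\|V'\|_{L^\infty}$ this term is only $O(\|u\|_{L^2}^2)$, so Gronwall gives $e^{Ct}$. On $[0,2T]$, with $T$ as large as $\nu^{-\frac13}$, this is $e^{C\nu^{-\frac13}}$. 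The $\om$-energy, whose nonlocal term $ikV''\psi$ is controlled by $\|u\|_{L^2}^2$, inherits the same growth. Uniform-in-time control of the homogeneous flow is not an energy fact here. It rests on the spectral property of the Rayleigh operator (no eigenvalues or embedded eigenvalues, from monotonicity and convexity), which enters only through resolvent bounds or the linearized Euler theory.

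\textbf{The boundary correction.} The no-slip/Navier-slip mismatch is a defect in boundary conditions, not a forcing. So writing $\om_b$ as a solution driven by an $H^1_k$ forcing built from $\pa_y\om^{\mathrm{in}}$ is not justified.

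\textbf{The missing idea.} It is the one you mention only in passing, ``subtract an explicit inviscid part.'' The paper takes the linearized Euler solution $\om_{H,0}$ and sets $\om_{H,1}=e^{-\nu k^2((V')^2t^3/3+t)}\om_{H,0}$. This carries both inviscid damping and enhanced dissipation and satisfies \eqref{omH1L2L2}--\eqref{u1HL2}, taken from \cite{Chen-Wei-Zhang1}, where the spectral condition is used.

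The resulting error consists of small terms: $f_2=-\nu\om_{1,1}$, $f_3=-i\nu kt(V''\om_{H,1}+V'\om_{1,1})$, and $f_4=-ikV''\psi_{1,1}$. Feeding them into Lemma \ref{Prop: inhomo} bounds $\om_{H,2}$ by $|k|^{-2}\|\om^{\mathrm{in}}\|_{H^1}^2$. A time cut-off instead produces an $O(T^{-1})$ forcing that the $\nu^{-1}$ loss in the resolvent bound cannot absorb.

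The boundary mismatch is isolated in $\om_{H,3}$, a homogeneous problem. After Fourier transform in $t$ one has $w_3=-c_1w_{3,1}-c_2w_{3,2}$. The corrector bounds \eqref{w1L2}--\eqref{weightL2}, together with the weighted coefficient bound \eqref{equ:coefficient,c1,c2}, then yield the $\nu^{\frac12}|k|$ and $\rho_k$-weighted estimates directly, with no short-time analysis needed.
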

\begin{proof}
Let $\om_{H,0}$ solve the linearized Euler equation
\begin{align*}
\pa_t \om_{H,0}+ik V \om_{H,0}-ikV''(\pa^2_y-k^2)^{-1}\om_{H,0}=0, \quad \om_{H,0}(0)=\om_{k}^{\mathrm{in}}.
\end{align*}
To incorporate the effect of enhanced dissipation, we then set
\begin{align*}
&\om_{H,1}(t, k,y)=e^{-\nu k^2 ((V')^2t^3/3+t)}\om_{H,0}(t,k,y),
\end{align*}
which satisfies
\begin{align}\label{equ: omH1}
&\pa_t\om_{H,1}-\nu(\pa^2_y- k^2)\om_{H,1}+i k V\om_{H,1}-ikV''\psi_{H,1}\\
&\qquad=-\nu (\pa_y\om_{1,1}-ikt V'' \om_{H,1}-iktV'\om_{1,1})+ikV''\psi_{1,1},\notag
\end{align}
where $\psi_{1,1}=\psi_{H,1}-e^{-\nu k^2 ((V')^2t^3/3+t)}\psi_{H,0}$ and $\om_{1,1}=\pa_y \om_{H,1}+iktV'\om_{H,1}$.

We also introduce the notations
\begin{align*}
u_{H,j}=(\pa_y, -i k)\psi_{H,j}, \quad \psi_{H,j}=(\pa^2_y- k^2)^{-1}\om_{H,j}, \quad j=0,1,2,3.
\end{align*}
Then we decompose $\om_{H}$ into three parts
\begin{align*}
\om_{H}=\om_{H,1}+\om_{H,2}+\om_{H,3},
\end{align*}
where $\om_{H,2}$ satisfies the inhomogeneous NS equation with  no-slip boundary condition
\begin{equation}\left\{\begin{aligned}
&\pa_t\om_{H,2}-\nu(\pa^2_y- k^2)\om_{H,2}+i kV\om_{H,2}-ikV''\psi_{H,2}\\
&\qquad \qquad =\nu (\pa_y\om_{1,1}-ikt V'' \om_{H,1}-iktV'\om_{1,1})-ikV''\psi_{1,1},\\
&\om_{H,2}|_{t=0}=0,\qquad \lan \om_{H,2}, \sinh (ky)\ran =\lan \om_{H,2}, \sinh (k(1-y))\ran=0,
\end{aligned}\right.\label{equ: omH2}\end{equation}
and $\om_{H,3}$ solves
\begin{equation}\left\{
\begin{aligned}
&\pa_t\om_{H,3}-\nu(\pa^2_y- k^2)\om_{H,3}+i k V\om_{H,3}-ik V''\psi_{H,3}=0, \\
&\om_{H,3}|_{t=0}=0, \qquad \lan \om_{H,3}+\om_{H,1}, \sinh (ky)\ran =\lan \om_{H,3}+ \om_{H,1}, \sinh (k(1-y))\ran=0.\label{equ: omH3}
\end{aligned}
\right.
\end{equation}

\textbf{Step 1.} Estimates of $\om_{H,1}$.\smallskip

The following estimates for $\om_{H,1}$ have been established in \cite[Proposition 6.2]{Chen-Wei-Zhang1} that
\begin{align}
&\nu^{\frac13} |k|^{\frac23} \|e^{\epsilon \nu^{\frac13}t}\om_{H,1}\|^2_{L^2L^2}\lesssim \|\om^{\mathrm{in}}_{k}\|^2_{L^2},\label{omH1L2L2}\\
&\nu \|e^{\epsilon \nu^{\frac13}t}\om_{1,1}\|^2_{L^2L^2}\lesssim \nu^{\frac23}| k|^{-\frac23}\| \om^{\mathrm{in}}_{k}\|^2_{H^1}\label{payom1HL2L2},\\
&\nu^{\frac53}|k|^{\frac43}\lrs{\|e^{\epsilon \nu^{\frac13}t}t\om_{H,1}\|^2_{L^2L^2}+\|e^{\epsilon \nu^{\frac13}t}t\om_{1,1}\|^2_{L^2L^2}}\lesssim \nu^{\frac23}|k|^{-\frac23}\|\om^{\mathrm{in}}_{k}\|^2_{H^{1}},\label{esti: om1HL2}\\
&\nu^{-\frac16}|k|^{-\frac{1}{3}}\|e^{\epsilon \nu^{\frac13}t}   \psi_{1,1} \|^2_{L^2H_{k}^{1}}\lesssim \nu^{\frac12}|k|^{-1}\|\om^{\mathrm{in}}_{k}\|^2_{H^{1}}.\label{u1HL2}
\end{align}

\textbf{Step 2.} Estimates of $\om_{H,2}$.\smallskip

Applying Lemma \ref{Prop: inhomo} with
\begin{align*}
f_2=-\nu \om_{1,1},\quad f_3=-i\nu kt(V''\om_{H,1}+V'\om_{1,1}),\quad f_4=-ikV''\psi_{1,1},
\end{align*}
we have
\begin{align}\label{esti: om2HL2L2}
&\nu^{\frac12} | k|\|e^{\epsilon \nu^{\frac13}t}\om_{H,2}\|^2_{L^2L^2}+\nu^{\frac13}|k|^{\frac23}\|e^{\epsilon \nu^{\frac13}t}\rho^{\frac12}_k\om_{H,2}\|^2_{L^2L^2}\\
&\lesssim \nu\|e^{\epsilon \nu^{\frac13}t} \om_{1,1}\|^2_{L^2L^2}+\nu^{\frac53}|k|^{\frac43}\| e^{\epsilon \nu^{\frac13}t}t(V''\om_{H,1}+V'\om_{1,1}) \|^2_{L^2L^2}\notag\\
&\quad+\nu^{-\frac16}|k|^{-\frac{7}{3}} \|e^{\epsilon \nu^{\frac13}t}( ik V'' \psi_{1,1}) \|^2_{L^2H_{k}^{1}}.\notag
\end{align}

Inserting \eqref{payom1HL2L2}--\eqref{u1HL2} into \eqref{esti: om2HL2L2}, we obtain
\begin{equation}\label{esti:omH2}
\begin{aligned}
\nu^{\frac12} | k| \|e^{\epsilon \nu^{\frac13}t}\om_{H,2}\|^2_{L^2L^2}+\nu^{\frac13}|k|^{\frac23}\|e^{\epsilon \nu^{\frac13}t}\rho^{\frac12}_k\om_{H,2}\|^2_{L^2L^2}\lesssim |k|^{-2}\|\om^{\mathrm{in}}_{k}\|^2_{H^{1}}.
\end{aligned}
\end{equation}

\textbf{Step 3.} Estimates of $\om_{H,3}$.\smallskip

Taking the Fourier transform in the time-variable $t$, we have
\begin{align*}
&w_{3}(\la, k,y)=\int^{+\infty}_0\om_{H,3}(t, k,y) e^{-it\la+\epsilon \nu^{\frac13}t}dt,\quad
\phi_{3}(\la,k,y)=\int^{+\infty}_{0}\psi_{H,3}e^{-it\la+\epsilon \nu^{\frac13}t}dt,
\end{align*}
and
\begin{align*}
(i\la-\epsilon \nu^{\frac13}-\nu (\pa^2_y- k^2)+i k V) w_{3}(\la, k,y)-ik V'' \phi_{3}=0.
\end{align*}
Thus, we get
\begin{align*}
w_{3}=-c_1(\la)w_{3,1}-c_2(\la)w_{3,2},
\end{align*}
and
\begin{align*}
c_1(\la)=-\int^{1}_{0} \frac{\sinh (k(1-y))}{\sinh k} w_{3} dy,\qquad c_2(\la)=-\int^{1}_{0} \frac{\sinh (ky)}{\sinh k} w_{3} dy, 
\end{align*}
where $w_{3,1}$, $w_{3,2}$ are the boundary layer correctors defined by the  homogeneous OS equations \eqref{equ: psi1} and \eqref{equ: psi2} with $\la$ replaced by $-\la/k-i\epsilon \nu^{\frac13}/k$.

By \eqref{w1L2}--\eqref{weightL2}, we infer
\begin{align*}
\|w_{H,3}(\la,k,y)\|_{L^2_y}\leq &|c_1(\la)|\|w_{3,1}\|_{L^2_y}+|c_2(\la)|\|w_{3,2}\|_{L^2_y}\\
\lesssim & \nu^{-\frac14}\big(|c_1(\la)|(1+|\la+kV(0)|)^{\frac14}+|c_2(\la)|(1+|\la+kV(1)|)^{\frac14}\big),\\
\|\rho^{\frac12}_k w_{H,3}(\la,k,y)\|_{L^2_y}\leq & |c_1(\la)|\|\rho^{\frac12}_kw_{3,1}\|_{L^2_y}+|c_2(\la)|\|\rho^{\frac12}_kw_{3,2}\|_{L^2_y}
\lesssim \nu^{-\frac16}|k|^{\frac16} (|c_1(\la)|+|c_2(\la)|),
\end{align*}
which implies that
\begin{align}\label{equ:wH3L2}
&\nu^{\frac14}|k|^{\frac12}\|e^{\epsilon \nu^{\frac13}t}\om_{H,3}\|_{L^2L^2}+\nu^{\frac16}|k|^{\frac13}\|e^{\epsilon \nu^{\frac13}t}\rho^{\frac12}_k \om_{H,3}\|_{L^2L^2}\\
&\lesssim  |k|^{\frac12} (\|(1+|\la+kV(0)|)^{\frac14}c_1\|_{L^2}+\|(1+|\la+kV(1)|)^{\frac14}c_2\|_{L^2}).\notag
\end{align}
The coefficients $c_{1}$ and $c_{2}$ have been bounded in \cite[(7.21)]{Chen-Wei-Zhang1} as
\begin{align}\label{equ:coefficient,c1,c2}
\|(1+|\la+kV(0)|)c_1\|_{L^2}+\|(1+|\la+kV(1)|)c_2\|_{L^2}\lesssim  |k|^{-\frac32}\|\om^{\mathrm{in}}_{k}\|_{H^{1}}.
\end{align}
Putting \eqref{equ:coefficient,c1,c2} into \eqref{equ:wH3L2}, we arrive at
\begin{equation}\label{esti:omH3}
\begin{aligned}
\nu^{\frac12} | k| \|e^{\epsilon \nu^{\frac13}t}\om_{H,3}\|^2_{L^2L^2}+\nu^{\frac13}|k|^{\frac23}\|e^{\epsilon \nu^{\frac13}t}\rho^{\frac12}_k \om_{H,3}\|^2_{L^2L^2}\lesssim |k|^{-2}\|\om^{\mathrm{in}}_{k}\|^2_{H^{1}}.
\end{aligned}
\end{equation}

Combining \eqref{omH1L2L2}, \eqref{esti:omH2}  with \eqref{esti:omH3}, we complete the proof of Lemma \ref{Prop homo pro}.
\end{proof}

\subsection{Space-time estimates for low frequencies: heat case}\label{sec:Space-time estimates for low frequencies: heat flow case}

For the subsequent application of the time-frozen method, we require uniform control over the background shear flow $U(t,y)$, which is governed by the heat equation
\begin{equation}
\left\{
\begin{aligned}
&\pa_{t}U=\nu\pa_{y}^{2}U,\\
&U(0,y)=U^{\mathrm{in}}(y),\quad U(t,0)=U^{\mathrm{in}}(0),\quad U(t,1)=U^{\mathrm{in}}(1).
\end{aligned}
\right.
\end{equation}
 The necessary uniform estimates are established in the lemma below.
\begin{lemma}
Assume that the initial datum $U^{\mathrm{in}}$ satisfies the $(M)$ condition \eqref{equ:condition,shear,flow,M}. Then the following estimates hold.
\begin{enumerate}[$(1)$]
\item
The uniform lower and upper bounds for $\pa_{y}U$:
\begin{align}
0<c_{0}\leq \inf_{t\in[0,\infty)}\inf_{y\in[0,1]}\pa_{y}U\leq \n{\pa_{y}U}_{L_{t}^{\infty}L_{y}^{\infty}}\leq C_{0}.
\end{align}
\item Instantaneous positivity property for $\pa_{y}^{2}U$:
\begin{align}
&\text{if $\pa_{y}^{2}U^{\mathrm{in}}\geq 0$, then $\pa_{y}^{2}U\equiv0$, or $\pa_{y}^{2}U(t,y)>0$, $\f t>0, y\in (0,1)$}, \\
&\text{if $\pa_{y}^{2}U^{\mathrm{in}}\leq 0$, then $\pa_{y}^{2}U\equiv0$, or $\pa_{y}^{2}U(t,y)<0$, $\f t>0, y\in (0,1)$}.
\end{align}
\item Regularity estimates:
\begin{align}
\n{U(t,y)}_{L_{t}^{\infty}H^{4}}\lesssim& 1,\label{equ:U,H4,estimate}\\
\|U(t,y)-U(s,y)\|_{ L^\infty_{y}}\lesssim & \nu(t-s)\|U^{\mathrm{in}}\|_{H^4},\label{esti: Ut-Us Linfty}\\
\|\pa^2_y U(t,y)-\pa^2_y U(s,y)\|_{L^2_{y}}\lesssim & \nu(t-s)\|U^{in}\|_{H^4}.\label{esti: pay3Ut-UsLinfty}
\end{align}
\end{enumerate}
\end{lemma}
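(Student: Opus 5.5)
The plan is to work with the derivatives $V_1=\pa_yU$ and $V_2=\pa_y^2U$, each of which solves a heat equation of its own. We then treat items (1), (2) and (3) in turn, using the maximum principle and energy estimates.

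\emph{Item (1).} Differentiating $\pa_tU=\nu\pa_y^2U$ in $y$ gives $\pa_tV_1=\nu\pa_y^2V_1$. Since $U(t,0)$ and $U(t,1)$ are constant in time, the equation evaluated on the boundary gives $\nu\pa_y^2U(t,j)=\pa_tU(t,j)=0$. Hence $\pa_yV_1=0$ at $y=0,1$, so $V_1$ solves the heat equation with homogeneous Neumann data. The maximum principle for the Neumann problem then yields
\[
\inf_y\pa_yU^{\mathrm{in}}\le \pa_yU(t,y)\le \sup_y\pa_yU^{\mathrm{in}} .
\]
The lower bound is $c_0$ by (M), and the upper bound is finite by $H^4\subset C^1$. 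This is the ``conservation property'' mentioned in the introduction; note also that $\int_0^1V_1\,dy=U^{\mathrm{in}}(1)-U^{\mathrm{in}}(0)$ is conserved.

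\emph{Item (2).} The function $V_2=\pa_y^2U$ solves $\pa_tV_2=\nu\pa_y^2V_2$ with Dirichlet data $V_2(t,0)=V_2(t,1)=0$, shown above. Its initial datum $\pa_y^2U^{\mathrm{in}}$ is continuous, vanishes at the endpoints by (M), and has a sign. The strong maximum principle (equivalently, positivity of the Dirichlet heat kernel on $(0,1)$ in the sine series) gives the dichotomy: if $\pa_y^2U^{\mathrm{in}}\ge0$ and $\not\equiv0$, then $V_2(t,y)>0$ for all $t>0$ and $y\in(0,1)$; otherwise $V_2\equiv0$. The concave case is symmetric.

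\emph{Item (3).} For the $H^4$ bound we control $V_2$ in $H^2$. The key observation is that $V_2$ and $\pa_y^2V_2=\nu^{-1}\pa_tV_2$ both vanish on the boundary. Consequently $V_2$ lies in the domain of the Dirichlet Laplacian, and its $H^2$ norm is equivalent to $\|\pa_y^2V_2\|_{L^2}$. The energy identity for the heat equation satisfied by $W=\pa_y^2V_2$ with Dirichlet data gives $\frac{d}{dt}\|W\|_{L^2}^2\le0$. Hence
\[
\|\pa_y^4U(t)\|_{L^2}\le\|\pa_y^4U^{\mathrm{in}}\|_{L^2}.
\]
The same monotonicity holds for $\|V_2\|_{L^2}$ and for $\|\pa_yV_2\|_{L^2}$. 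Combined with item (1) for $V_1$ and the boundary values of $U$, this gives $\|U\|_{L^\infty_tH^4}\lesssim1$.

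One subtlety sits at $t=0$. Rigorously, $W(0)=\pa_y^4U^{\mathrm{in}}$ is only in $L^2$, and the compatibility condition $\pa_y^2V_2(0)=0$ on the boundary need not hold for the initial datum. I would sidestep this by working with the sine-series expansion of $V_2$. There the $H^2_0$-type norm $\sum n^4|\hat V_{2,n}|^2$ is nonincreasing, and it is finite initially because $V_2(0)\in H^2\cap H^1_0$. The compatibility condition is not needed for this.

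For the time differences, write
\[
U(t)-U(s)=\nu\int_s^t\pa_y^2U(\tau)\,d\tau,\qquad \pa_y^2U(t)-\pa_y^2U(s)=\nu\int_s^t\pa_y^4U(\tau)\,d\tau .
\]
The first is bounded in $L^\infty$ by $\nu(t-s)\sup_\tau\|\pa_y^2U(\tau)\|_{L^\infty}$, and the second in $L^2$ by $\nu(t-s)\sup_\tau\|\pa_y^4U(\tau)\|_{L^2}$. Both suprema are bounded by $\|U^{\mathrm{in}}\|_{H^4}$ thanks to the previous step.

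The main obstacle I expect is exactly this justification of the boundary behaviour and of the $H^4$ propagation. To settle it, I would first do everything for smooth data satisfying the boundary compatibility conditions. I would then approximate $U^{\mathrm{in}}$ in $H^4$ by such data and pass to the limit using the uniform bounds above. An alternative is to use the explicit sine-series representation $V_2=\sum \hat V_{2,n}(0)e^{-\nu\pi^2n^2t}\sin(n\pi y)$ directly.
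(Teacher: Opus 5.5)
Your proposal is correct and follows essentially the paper's argument. You use the Neumann maximum principle for $\partial_yU$, where the paper phrases the same step through the positive, mass-conserving Neumann heat kernel. Both arguments use the strong maximum principle for the Dirichlet problem satisfied by $\partial_y^2U$. You propagate the $H^2$ norm of $\partial_y^2U$ by the sine-series/spectral truncation, which is exactly how the paper's appendix lemma on Dirichlet heat $H^2$ bounds is proved, and both arguments then integrate $\partial_tU=\nu\partial_y^2U$ and $\partial_t\partial_y^2U=\nu\partial_y^4U$ in time. The only cosmetic difference is the low-order part of the $H^4$ bound: the paper applies the Dirichlet $H^2$ estimate to $U$ minus its linear interpolant, while you use item (1) together with the fixed boundary values.
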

\begin{proof}
For $(1)$, we
notice that $\pa_{y}U$ satisfies the heat equation with the Neumann boundary condition. The solution can be expressed using the Neumann heat kernel
\begin{align}
\pa_{y}U(t,y)=\int_{0}^{1}K(t,y,z)\pa_{z}U^{\mathrm{in}}(z)dz,
\end{align}
where the Neumann heat kernel is positive and satisfies the conservation property
\begin{align}
K(t,y,z)>0,\quad \int_{0}^{1}K(t,y,z)dz=1.
\end{align}
Hence, together with the lower and upper bounds for the initial datum, we derive
\begin{align}
0<c_{0}\leq \inf_{z\in(0,1)}\pa_{z}U^{\mathrm{in}}(z)\leq \pa_{y}U(t,y)\leq \sup_{z\in(0,1)}\pa_{z}U^{\mathrm{in}}(z)\leq C_{0}.
\end{align}

For $(2)$, due to that $\pa_{y}^{2}U$ satisfies the heat equation with the Dirichlet boundary condition, the instantaneous positivity property follows directly from the strong maximum principle.

For $(3)$, we set
$$f(t,y)=:U(t,y)-\lrs{U^{\mathrm{in}}(1)-U^{\mathrm{in}}(0)}y-U^{\mathrm{in}}(0),$$
which solves the heat equation with the Dirichlet boundary condition.
Applying the $H^{2}$ estimate \eqref{equ:heat,dirichlet,H2} for the Dirichlet heat equation, we have
\begin{align}
\n{f}_{L_{t}^{\infty}H^{2}}\leq \n{f(0)}_{H^{2}},
\end{align}
which implies that
\begin{align}
\n{U}_{L_{t}^{\infty}H^{2}}\lesssim 1.\label{equ:U,H2,estimate}
\end{align}
Similarly, with the initial condition $\pa_{y}^{2}U^{\mathrm{in}}\in H^{2}\cap H_{0}^{1}$, we also obtain
\begin{align}
&\|\pa_y^{2}U\|_{L_{t}^{\infty}H^2}\leq \|\pa_y^{2} U^{\mathrm{in}}\|_{H^2}. \label{pay2U-yH2}
\end{align}
Putting together \eqref{equ:U,H2,estimate} and \eqref{pay2U-yH2}, we arrive at \eqref{equ:U,H4,estimate}.

Due to the initial condition $U^{\mathrm{in}}\in H^{2}\cap H_{0}^{1}$, we rewrite
\begin{align*}
U(t,y)-U(s,y)=\int_{s}^{t}\pa_{\tau} U(\tau,y) d\tau=\nu \int^{t}_{s}\pa_y^2 U(\tau,y) d\tau.
\end{align*}
Then by the interpolation inequality and $H^{2}$ estimate \eqref{pay2U-yH2}, we derive
\begin{align*}
\|U(t,y)-U(s,y)\|_{L^\infty_{y}}\leq& \nu\int^{t}_{s}\|\pa_y^2 U(\tau,y)\|_{L^\infty_{y}} d\tau
\lesssim \nu\int^{t}_{s}\|\pa^2_y U(\tau,y)\|^{\frac12}_{L^2} \|\pa^2_y U(\tau,y)\|^{\frac12}_{\dot{H}^1}dy\\
\leq& \nu(t-s)\|U^{\mathrm{in}}\|_{H^4}.
\end{align*}
Similarly, for the second derivative, we also have
\begin{align*}
\bn{\pa^2_y U(t)-\pa^2_yU(s)}_{ L^2_{y}}\leq& \nu\int^{t}_{s}\bn{\pa_y^4 U(\tau,y)}_{L^2_{y}} d\tau\leq \nu(t-s)\|U^{\mathrm{in}}\|_{H^4}.
\end{align*}
\end{proof}

Now, we consider the solution to the equation \eqref{equ: om} for $\nu k^2\leq \n{\pa_{y}U}_{L_{t}^{\infty}L_{y}^{\infty}}$ and establish the following space-time estimates.
\begin{proposition}\label{lemma: uL2+omL2}
Let $\nu k^2\leq \n{\pa_{y}U}_{L_{t}^{\infty}L_{y}^{\infty}}$, and $(\om,\psi)$ be the solution to \eqref{equ: om}. Then
there exist constants $\nu_0>0$ and $\epsilon_{0}>0$ such that, for $\nu\in(0,\nu_0]$, $\epsilon\in(0,\epsilon_{0}]$, it holds that
\begin{align}\label{equ:low frequency,space-time,flow}
&|k|^2\|e^{\epsilon \nu^{\frac13}t}u\|^2_{L^2 L^2}+\nu^{\frac12} |k|\|e^{\epsilon \nu^{\frac13}t}\om\|^2_{L^2 L^2}+\nu^{\frac13}|k|^{\frac23}\|e^{\epsilon \nu^{\frac13}t}\rho^{\frac12}_k \om\|^2_{L^2L^2}\\
&\lesssim E^{\mathrm{in}}+\nu^{-1}\|e^{\epsilon \nu^{\frac13}t}(f_1,f_2)\|^2_{L^2 L^2}.\notag
\end{align}
\end{proposition}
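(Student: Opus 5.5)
We use the frozen-time method, reducing to Lemma \ref{lemma: omconst} on short time windows and treating the error from freezing the coefficient as a source term. Partition $[0,\infty)$ into intervals $I_n=[nT,(n+1)T]$ with $T=\nu^{-\frac13}|k|^{-\frac23}\cdot M$ for a large constant $M$. This is a few enhanced-dissipation time scales, so decay over one window beats constant loss. A more convenient choice would be $T\sim \nu^{-\frac13}$ uniform in $k$. On each $I_n$ freeze $V_n(y)=U(nT,y)$. By the preceding lemma, $V_n$ satisfies \eqref{cond V} uniformly in $n$: $\pa_yV_n\geq c_0$, $V_n''$ has a strict sign for $nT>0$ (or $V_n''\equiv0$, the Couette case), and $\n{V_n}_{H^4}\lesssim1$. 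Hence the implicit constants in Lemma \ref{lemma: omconst} are uniform in $n$.

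\emph{Caveat to verify:} Lemma \ref{lemma: omconst} relies on the absence of embedded eigenvalues, quantitatively through \cite{Chen-Wei-Zhang1}. The constants depend on a lower bound of the Rayleigh spectral gap. The strict sign of $V_n''$ gives this qualitatively, and the uniform $H^4$ bound plus compactness in time should make it uniform. For $n=0$, when $U^{\mathrm{in}}{}''$ merely has a sign, I would use that weak convexity still excludes embedded eigenvalues.

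On $I_n$, rewrite \eqref{equ: om} as
\[
\pa_t\om-\nu(\pa_y^2-k^2)\om+ikV_n\om-ikV_n''\psi=-ikf_1-\pa_yf_2-f_3-f_4,
\]
with $f_3=ik(U-V_n)\om$ and $f_4=-ik(\pa_y^2U-V_n'')\psi$. By \eqref{esti: Ut-Us Linfty},
\[
\nu^{-\frac13}|k|^{-\frac23}\n{f_3}^2_{L^2(I_n)L^2}\lesssim \nu^{-\frac13}|k|^{\frac43}(\nu T)^2\n{\om}^2_{L^2(I_n)L^2}.
\]
This is to be absorbed by $\nu^{\frac12}|k|\n{\om}^2$, which requires $\nu^{\frac53}T^2|k|^{\frac43}\ll\nu^{\frac12}|k|$. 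Using \eqref{esti: pay3Ut-UsLinfty}, $f_4$ is bounded in $H^1_k$ by $|k|\nu T\n{(\pa_y,k)\psi}$, so it is controlled by $\nu^{-\frac16}|k|^{-\frac13}(\nu T)^2|k|^{-2}\cdot|k|^2\n{u}^2$ and absorbed into $|k|^2\n{u}^2$. With $T=M\nu^{-\frac13}|k|^{-\frac23}$ and $\nu k^2\lesssim1$, both smallness conditions reduce to positive powers of $\nu$ times powers of $M$, so they hold for $\nu\le\nu_0(M)$.

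Apply Lemma \ref{lemma: omconst} on each window to the problem started at $t=nT$, with the weight $e^{\epsilon\nu^{1/3}(t-nT)}$. The initial energy on window $n$ is $E^{\mathrm{in}}(nT)=|k|^{-2}\n{\pa_y\om(nT)}^2+\n{u(nT)}^2_{H^1}$. This is the main obstacle: the lemma needs $H^1$ control of $\om$ at each restart time, and no estimate propagates this a priori. I would handle it by splitting each window's solution into a homogeneous part and an inhomogeneous part. Only the homogeneous part carries the restart data, and an energy estimate on $\pa_y\om$ (from the $\nu\|\pa_y\om\|^2$ dissipation and an $L^2_t$ average over the previous window) chooses a good restart time $t_n\in I_{n-1}$ at which
\[
|k|^{-2}\n{\pa_y\om(t_n)}^2\lesssim T^{-1}\nu^{-1}|k|^{-2}\n{\om}^2_{L^2(I_{n-1})}+\dots .
\]
The factor $T^{-1}\nu^{-1}|k|^{-2}=M^{-1}\nu^{-\frac23}|k|^{-\frac43}$ must then be compared with $\nu^{\frac12}|k|$. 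If this gives a loss, fall back on overlapping windows whose lengths are chosen randomly, so that the average cost is acceptable.

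Finally sum over $n$. Multiply window $n$ by $e^{2\epsilon\nu^{1/3}nT}$; since $\epsilon\nu^{\frac13}T=\epsilon M|k|^{-\frac23}\lesssim\epsilon M$, the conversion between weights costs only a constant for small $\epsilon$. The restart terms form a geometric-type recursion dominated by the previous window's space-time norms times a small factor. Closing it yields \eqref{equ:low frequency,space-time,flow}.
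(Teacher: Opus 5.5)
There is a genuine gap, and it is the step you yourself call the main obstacle: restarting Lemma \ref{lemma: omconst} on every window. That lemma charges the initial datum in the norm $E^{\mathrm{in}}$ with an $O(1)$ constant. At a restart time you therefore pay $E^{\mathrm{in}}(t_n)=|k|^{-2}\n{\pa_y\om(t_n)}^2_{L^2}+\n{u(t_n)}^2_{H^1}\gtrsim \n{\om(t_n)}^2_{L^2}$, and nothing on the left of \eqref{equ:low frequency,space-time,flow} controls this with a small factor.

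With forcing $\pa_y f_2$ and $f_2\in L^2L^2$, $\n{\pa_y\om(t)}_{L^2}$ is not bounded pointwise in time, so you must select good times. Your own bookkeeping shows the cost: comparing $T^{-1}\nu^{-1}|k|^{-2}=M^{-1}\nu^{-\frac23}|k|^{-\frac43}$ with $\nu^{\frac12}|k|$ leaves a factor $M^{-1}\nu^{-\frac76}|k|^{-\frac73}$. This is small only for $|k|\sim\nu^{-\frac12}$ and is as large as $M^{-1}\nu^{-\frac76}$ for $|k|\sim 1$.

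Even the $L^2$ part alone fails. The bound $\n{\om(t_n)}^2_{L^2}\le T^{-1}\n{\om}^2_{L^2(I_{n-1};L^2)}$ is small relative to $\nu^{\frac12}|k|\n{\om}^2_{L^2L^2}$ only if $T\gg\nu^{-\frac12}|k|^{-1}$. That requires $|k|\gtrsim M^{-3}\nu^{-\frac12}$ for $T=M\nu^{-\frac13}|k|^{-\frac23}$, respectively $|k|\gg\nu^{-\frac16}$ for $T=\nu^{-\frac13}$. So each window costs a non-small multiple of the previous one, and the bounds accumulate over unboundedly many windows. This is exactly the accumulation problem the introduction warns about. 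Randomizing window lengths only averages the same factor $T^{-1}$. Splitting into homogeneous and inhomogeneous parts does not help either, since the homogeneous part still starts from the full $\om(t_n)$.

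The missing idea is never to restart the full solution. The paper takes $t_j=j\nu^{-\frac13}$ and writes $\om=\sum_j\om_j$, where $\om_j$:
\begin{enumerate}[$(a)$]
\item solves, for all $t\ge t_j$, the equation with coefficients frozen at $U(t_{j+1},\cdot)$;
\item is forced only on $I_j$, by $ikf_1+\pa_yf_2+G_j$;
\item has zero data for $j\ge1$, so only $\om_0$ carries $\om^{\mathrm{in}}$.
\end{enumerate}
Here $G_j=\sum_{j'\le j}(F_{j',1}-F_{j',2})$ collects the freezing errors that all earlier pieces commit on $I_j$. It enters as the $L^2$ forcing $f_3$ of the new piece, with $f_4=0$; the $\pa_y^2U$ part is handled via $\n{\psi_{j'}}_{L^\infty}\lesssim\n{\om_{j'}}_{L^2}$. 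Lemma \ref{lemma: omconst} is then applied to each $\om_j$ on $(t_j,T)$ with $E^{\mathrm{in}}=0$ for $j\ge1$, so no $H^1$ restart data ever appears. The error of piece $j'$ on $I_j$ has size $\nu^{\frac23}(j-j'+1)$, while that piece has already decayed by $e^{-\epsilon_1(j-j')}$. For $D_j=\sum_{j'\le j}e^{-\epsilon_1(j-j')}\n{e^{\epsilon_1\nu^{\frac13}(t-t_{j'})}\om_{j'}}^2_{X(t_{j'},t_{j+1})}$ this gives
\[
D_j\lesssim\sum_{j'=0}^{j}e^{-\epsilon_1(j-j')}\Big[E^{\mathrm{in}}1_{\{j'=0\}}+\nu^{-1}\n{(f_1,f_2)}^2_{L^2(I_{j'};L^2)}+\nu^{\frac13}D_{j'}\Big],
\]
which closes because of the factor $\nu^{\frac13}$.

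Two smaller points.
\begin{enumerate}[$(1)$]
\item Freezing at the right endpoint $t_{j+1}>0$ is what makes the strict sign of $\pa_y^2U$ available for every piece. Your choice $V_0=U^{\mathrm{in}}$ would need weak convexity to exclude embedded eigenvalues, which the paper explicitly allows to fail at $t=0$. The uniformity in the frozen profile that you worry about is used implicitly by the paper as well.
\item Your $H^1_k$ bound for $f_4$ would need control of $\pa_y^3U(t)-\pa_y^3U(s)$, which the stated regularity estimates do not provide. Placing that term in $f_3$, as the paper does, avoids this.
\end{enumerate}
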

\begin{proof}
It suffices to prove \eqref{equ:low frequency,space-time,flow} for any fixed time $T$. Without loss of generality, we take
$$T=t_{N+1},\quad t_{j}=j\nu^{-\frac{1}{3}}.$$
We decompose the solution
$\omega$ of \eqref{equ: om} into $\om=\sum_{j\geq 0} \om_{j}$, where for each $j\geq 0$, the component $\omega_{j}$ satisfies
\begin{equation}
\left\{
\begin{aligned}
&\pa_{t}\om_{j}-\nu (\pa^2_y-k^2)\om_{j}+ik U(t_{j+1},y)\om_{j}-ik \pa^2_y U (t_{j+1},y)\psi_{j}
=[ik f_{1}+\pa_{y} f_2+G_{j}]\chi_{I_{j}},\\
&\om_{j}=(\pa^2_y- k^2)\psi_{j},\quad \psi_{j}(0)=\psi_{j}(1)=\psi'_{j}(0)=\psi'_{j}(1)=0,\\
&\om_{j}(t_{j},y)=1_{\lr{j=0}}\om^{\mathrm{in}}(y),
\end{aligned}
\right.
\end{equation}
with $I_{j}=:[t_{j}, t_{j+1})$ and
\begin{align*}
&G_{j}=\sum_{j'=0}^{j}(F_{j',1}-F_{j',2}),\\
&F_{j',1}:=ik (U(t_{j'+1},y)-U(t,y))\om_{j'}, \quad F_{j',2}:=ik (\pa^2_y U(t_{j'+1},y)-\pa^2_y U(t,y))\psi_{j'}.
\end{align*}
For simplicity,
we define the norms
\begin{align}
&\|f\|_{Y}=|k|\|(\pa_y ,k)(\pa^2_y-k^2)^{-1}f\|_{L^2}
+\nu^{\frac14} |k|^{\frac12}\|f\|_{L^2}+\nu^{\frac16} |k|^{\frac13}\|\rho^{\frac12}_kf\|_{L^2},\label{def: omjY}\\
&\|f\|_{X(a,b)}=\|f\|_{L^2(a,b;Y)}.\label{def: omjXj}
\end{align}

For $0 \leq j\leq N$, noting that $\epsilon \nu^{\frac13}(t-t_{j})\leq 1$ for $t\in \mathrm{I}_{j}$, we use Lemma \ref{lemma: omconst} with $f_3=G_{j}\chi_{I_{j}}$ and $f_{4}=0$ to deduce
\begin{align}\label{esti: omjwithjgeq1}
&\|e^{\epsilon_{1} \nu^{\frac13}(t-t_{j})}\om_{j}\|^2_{X(t_{j},T)}\\
&\lesssim E^{\mathrm{in}}1_{\lr{j=0}}+\nu^{-1}\|e^{\epsilon_{1} \nu^{\frac13}(t-t_{j})}(f_1,f_2)\|^2_{L^2(\mathrm{I}_{j}; L^2)}
+\nu^{-\frac13}|k|^{-\frac23}\|e^{\epsilon_{1} \nu^{\frac13}(t-t_{j})}G_{j}\|^2_{L^2(\mathrm{I}_{j}; L^2)}\notag\\
&\lesssim E^{\mathrm{in}}1_{\lr{j=0}}+ \nu^{-1}\|(f_1,f_2)\|^2_{L^2(\mathrm{I}_{j}; L^2)}+\nu^{-\frac13}|k|^{-\frac23}\|G_{j}\|^2_{L^2(\mathrm{I}_{j}; L^2)}.\notag
\end{align}

It follows from \eqref{esti: Ut-Us Linfty} and \eqref{esti: pay3Ut-UsLinfty} that
\begin{align*}
&\n{U(t_{j'+1},y)-U(t,y)}_{L^{\infty}(\mathrm{I}_{j}; L^{\infty})}+\n{\pa^2_y U(t_{j'+1},y)-\pa^2_y U(t,y)}_{L^{\infty}(\mathrm{I}_{j}; L^{2})}\\
&\lesssim \nu \|t-t_{j'+1}\|_{L_{t}^\infty(\mathrm{I}_{j})}\|U^{in}\|_{H^4}\lesssim \nu^{\frac23}(j-j'+1).
\end{align*}
Combining this with the bound $\n{\psi_{j'}}_{L^{\infty}}\lesssim |k|^{-\frac{3}{2}}\n{w_{j'}}_{L^{2}}\leq \n{w_{j'}}_{L^{2}}$, we derive
\begin{align}\label{esti:F1+F2}
\nu^{-\frac16}|k|^{-\frac13}\|G_{j}\|_{L^2(\mathrm{I}_{j}; L^2)}
\leq &\nu^{-\frac16}|k|^{-\frac13}\sum_{j'=0}^{j}(\|F_{j',1}\|_{L^2(\mathrm{I}_{j}; L^2)}+\|F_{j',2}\|_{L^2(\mathrm{I}_{j}; L^2)} )\\
\leq &\nu^{-\frac16}|k|^{\frac23}\sum_{j'=0}^{j}\Big(\|U(t_{j'+1},y)-U(t,y)\|_{L^{\infty}(\mathrm{I}_{j}; L^{\infty})}\n{\om_{j'}}_{L^2(\mathrm{I}_{j};L^2)}\notag\\
&+\n{\pa^2_y U(t_{j'+1},y)-\pa^2_y U(t,y)}_{L^{\infty}(\mathrm{I}_{j}; L^{2})}\|\psi_{j'}\|_{L^2(\mathrm{I}_{j}; L^\infty)}\Big)\notag\\
\lesssim & \nu^{\frac{1}{2}}|k|^{\frac23}\sum_{j'=0}^{j}(j-j'+1)\|\om_{j'}\|_{L^2(\mathrm{I}_{j}; L^2)}.\notag
\end{align}

Since $\nu^{\frac13}(t-t_{j'})\geq j-j'$ for $t\in \mathrm{I}_{j}$ and $\nu k^2\lesssim 1$, it follows that
\begin{align*}
\nu^{\frac{1}{2}}|k|^{\frac23}\|\om_{j'}\|_{L^2(\mathrm{I}_{j}; L^2)}
\lesssim  \nu^{\frac{5}{12}}|k|^{\frac12}e^{-\epsilon_{1} (j-j')}\n{e^{\epsilon_{1} \nu^{\frac{1}{3}}(t-t_{j'})} \om_{j'}}_{L^2(\mathrm{I}_{j};L^2)}.
\end{align*}
Then by \eqref{esti:F1+F2}, the definition \eqref{def: omjXj} of the norm $X$, and Cauchy-Schwarz inequality, we derive
\begin{align}\label{esti:G1+G2}
\nu^{-\frac13}|k|^{-\frac23}\|G_{j}\|^2_{L^2(\mathrm{I}_{j}; L^2)}
 \lesssim & \nu^{\frac{1}{3}}  \lrs{\sum_{j'=0}^{j}(j-j'+1)  e^{-\epsilon_{1} (j-j') }\|e^{\epsilon_{1} \nu^{\frac{1}{3}}(t-t_{j'})}\om_{j'}\|_{X(\mathrm{I}_{j})}}^2\\
 \lesssim&  \nu^{\frac{1}{3}}  \sum_{j'=0}^{j} e^{-\epsilon_{1} (j-j') }\|e^{\epsilon_{1} \nu^{\frac{1}{3}}(t-t_{j'})}\om_{j'}\|^2_{X(\mathrm{I}_{j})}. \notag
\end{align}

Inserting \eqref{esti:G1+G2} into \eqref{esti: omjwithjgeq1}, we get
\begin{align}\label{esti: omjXj}
&\|e^{\epsilon_{1} \nu^{\frac13}(t-t_{j})}\om_{j}\|^2_{X(t_{j},T)}\\
&\lesssim  E^{\mathrm{in}}1_{\lr{j=0}}+ \nu^{-1}\|(f_1,f_2)\|^2_{L^2(\mathrm{I}_{j}; L^2)}+\nu^{\frac{1}{3}}\sum_{j'=0}^{j}  e^{-\epsilon_{1}(j-j')} \|e^{\epsilon_{1} \nu^{\frac{1}{3}}(t-t_{j'})}\om_{j'}\|^2_{X(\mathrm{I}_{j})}. \notag
\end{align}

Now we set
\begin{align*}
D_{j}=\sum_{j'=0}^{j}  e^{-\epsilon_{1} (j-j')} \|e^{\epsilon_{1} \nu^{\frac{1}{3}}(t-t_{j'})}\om_{j'}\|^2_{X(t_{j'}, t_{j+1})}.
\end{align*}
Then we get by \eqref{esti: omjXj} that
\begin{align*}
D_{j}
\lesssim
& \sum_{j'=0}^{j}  e^{-\epsilon_{1} (j-j')} \Big[E^{\mathrm{in}}1_{\lr{j'=0}}+
\nu^{-1}\|(f_1,f_2)\|^2_{L^2(\mathrm{I}_{j'}; L^2)}+\nu^{\frac{1}{3}}D_{j'}\Big].
\end{align*}

Choosing $\epsilon\leq \frac{\epsilon_{1}}{4}$ and applying Minkowski's inequality, we get
\begin{align*}
\sum_{j=0}^{N}e^{2\epsilon j}D_{j}
\lesssim
& \sum_{j=0}^{N}e^{2\epsilon j}\sum_{j'=0}^{j}  e^{-\epsilon_{1} (j-j')} \Big[E^{\mathrm{in}}1_{\lr{j'=0}}+
\nu^{-1}\|(f_1,f_2)\|^2_{L^2(\mathrm{I}_{j'}; L^2)}+\nu^{\frac{1}{3}}D_{j'}\Big]\\
=&\sum_{j=0}^{N}\sum_{j'=0}^{j}  e^{-(\epsilon_{1}-2\epsilon )(j-j')}e^{2\epsilon j' } \Big[E^{\mathrm{in}}1_{\lr{j'=0}}+
\nu^{-1}\|(f_1,f_2)\|^2_{L^2(\mathrm{I}_{j'}; L^2)}+\nu^{\frac{1}{3}}D_{j'}\Big]\\
\lesssim& \sum_{j'=0}^{N} e^{2\epsilon j' } \Big[E^{\mathrm{in}}1_{\lr{j'=0}}+
\nu^{-1}\|(f_1,f_2)\|^2_{L^2(\mathrm{I}_{j'}; L^2)}+\nu^{\frac{1}{3}}D_{j'}\Big]\\
\leq& E^{\mathrm{in}} + \nu^{-1}\n{e^{\epsilon  \nu^{\frac13} t}(f_1,f_2)}^2_{L^2(0, T; L^2)}+\nu^{\frac13} \sum_{j=0}^{N} e^{2\epsilon  j }D_{j},
\end{align*}
which implies that
\begin{align}\label{esti: sumDj}
\sum_{j=0}^{N}e^{2\epsilon  j}D_{j}
 \lesssim E^{\mathrm{in}} + \nu^{-1}\n{e^{\epsilon  \nu^{\frac13} t}(f_1,f_2)}^2_{L^2(0, T; L^2)}.
\end{align}

Finally, since $(0,T)=\sum_{j=0}^{N}\mathrm{I}_{j}$ and $\om=\sum_{j'=0}^{j}\om_{j'}$ for $t\in \mathrm{I}_{j}$, we have
\begin{align}\label{esti:omX0T}
\|e^{\epsilon \nu^{\frac13} t}\om\|^2_{X(0,T)}=&\sum_{j=0}^{N}\int_{\mathrm{I}_{j}}\|e^{\epsilon \nu^{\frac13}t}\om\|^2_{Y} dt
\lesssim\sum_{j=0}^{N}e^{2\epsilon j}\int_{\mathrm{I}_{j}}\bbn{\sum_{j'=0}^{j}\om_{j'}}^2_{Y} dt.
\end{align}
Using H\"older's inequality and the fact that $\nu^{\frac13}(t-t_{j'})\geq j-j'$ for $t\in \mathrm{I}_{j}$, we get
\begin{align}\label{equ:Ij,sum,j,Y}
\int_{\mathrm{I}_{j}}\lrs{\sum_{j'=0}^{j}\n{\om_{j'}}_{Y}}^2 dt\leq&\int_{\mathrm{I}_{j}}\sum_{j'=0}^{j}e^{-\epsilon_{1} \nu^{\frac13}(t-t_{j'})}\n{e^{\epsilon_{1} \nu^{\frac13}(t-t_{j'})}\om_{j'}}^2_{Y}\sum_{j'=0}^{j}e^{-\epsilon_{1} \nu^{\frac13}(t-t_{j'})} dt\\
\lesssim& \int_{\mathrm{I}_{j}}\sum_{j'=0}^{j} e^{-\epsilon_{1} (j-j')} \n{e^{\epsilon_{1} \nu^{\frac13}(t-t_{j'})}\om_{j'}}^2_{Y} dt\lesssim D_{j}.\notag
\end{align}
Putting together \eqref{esti: sumDj}, \eqref{esti:omX0T}, and \eqref{equ:Ij,sum,j,Y}, we arrive at
\begin{align*}
\|e^{\epsilon \nu^{\frac13} t}\om\|^2_{X(0,T)}\lesssim E^{\mathrm{in}} + \nu^{-1}\n{e^{\epsilon  \nu^{\frac13} t}(f_1,f_2)}^2_{L^2(0, T; L^2)},
\end{align*}
which completes the proof of Proposition \ref{lemma: uL2+omL2}.
\end{proof}

\subsection{Space-time estimates for the full problem}\label{sec: proof full pro}

We are in a position to prove Proposition \ref{lemma:est,u,om,timespace,f12}.

\begin{proof}[Proof of Proposition \ref{lemma:est,u,om,timespace,f12}]
 For the case $\nu k^2\geq \n{\pa_{y}U}_{L_{t}^{\infty}L_{y}^{\infty}}$, the desired estimates have already been established in Proposition \ref{Th xi small}.
Now, we focus on the low frequency case $\nu k^2\leq \n{\pa_{y}U}_{L_{t}^{\infty}L_{y}^{\infty}}$.

In Proposition \ref{lemma: uL2+omL2}, we have established the following estimates
\begin{align}\label{esti:omL2L2}
&|k|^2\|e^{\epsilon \nu^{\frac13}t}u\|^2_{L^2 L^2}+\nu^{\frac12} |k|\|e^{\epsilon \nu^{\frac13}t}\om\|^2_{L^2 L^2}+\nu^{\frac13}|k|^{\frac23}\|e^{\epsilon \nu^{\frac13}t}\rho^{\frac12}_k \om\|^2_{L^2L^2}\\
&\lesssim E^{\mathrm{in}}+\nu^{-1}\|e^{\epsilon \nu^{\frac13}t}(f_1,f_2)\|^2_{L^2 L^2}.\notag
\end{align}
It reduces to dealing with the remaining energy terms, which is split into the following three steps.

\textbf{Step 1.} Estimate of $\nu^{\frac14}|k|^{\frac12}\|e^{\epsilon \nu^{\frac13}t} \om\|_{L^\infty L^2}$.\smallskip

Testing \eqref{equ: om} by $\om$ and using integration by parts, we obtain
\begin{align*}
\frac12\frac{d}{dt}\|\om\|^2_{L^2}+\nu \|(\pa_y,k)\om\|^2_{L^2}
= &-\lan ikU \om-ik \pa^2_y U\psi , \om\ran +\lan -ikf_1-\pa_y f_2, \om\ran+\nu \pa_y \om \om \big|_{y=0}^{y=1}\\
= &-ik \int^1_0  U|\om|^2 + \pa^2_y U (|\pa_y \psi|^2+k^2|\psi|^2) dy -ik \int^1_0 \pa^3_y U\psi \pa_y \psi dy \\
&-\int^1_0 ikf_1 \om dy+\int^1_0 f_2\pa_y\om dy+[(\nu \pa_y \om-f_2)\om]\big|_{y=0}^{y=1}.
\end{align*}
Taking the real part and using \eqref{equ:U,H4,estimate}, we deduce
\begin{align}\label{dtomL2+nupaomL2}
&\frac12\frac{d}{dt}\|\om\|^2_{L^2}+\nu \|(\pa_y,k)\om\|^2_{L^2}\\
&\leq  C|k|\|\psi\|_{L^2}\|\pa_y \psi\|_{L^2}+|k|\|f_1\|_{L^2}\|\om\|_{L^2}+\|f_2\|_{L^2}\|\pa_y\om\|_{L^2}+\|\om\|_{L^\infty}\Big|(\nu \pa_y \om-f_2)\big|_{y=0}^{y=1}\Big|\notag\\
&\leq C\|u\|^2_{L^2}+\nu^{-1}\|(f_1,f_2)\|^2_{L^2}+\frac14\nu\|(\pa_y,k)\om\|^2_{L^2}+\|\om\|_{L^\infty}\Big|(\nu \pa_y \om-f_2)\big|_{y=0}^{y=1}\Big|.\notag
\end{align}

For simplicity, we set $\gamma_{1}=\frac{\sinh k(1-y)}{\sinh k}$, which satisfies
$(\pa^2_y -k^2)\gamma_1=0$.
Due to the no-slip boundary condition, we have
\begin{align*}
\lan \pa_t \om,\gamma_{1}\ran =\lan \pa_t \psi, (\pa^2_y-k^2) \gamma_1\ran =0.
\end{align*}
Then we get by \eqref{equ: om} that
\begin{align*}
0=&\lan \pa_t \om, \gamma_{1}\ran=\lan \nu(\pa^2_y-k^2)\om, \gamma_1\ran-\lan ik U\om,\gamma_1\ran+\lan ik\pa^2_y U\psi,\gamma_1\ran -\lan ikf_1,\gamma_1\ran-\lan \pa_yf_2,\gamma\ran\\
=&\lan -ik U\om,\gamma_1\ran +\lan ik\pa^2_y U\psi,\gamma_1\ran-\lan ikf_1,\gamma_1\ran+\lan f_2,\pa_y\gamma_1\ran +[-\nu \om \pa_y \gamma_1 +(\nu \pa_y\om-f_2)\gamma_1](0).
\end{align*}
Noting that
\begin{align*}
\lan -ik U\om,\gamma_1\ran =&\lan ik \pa_y U\pa_y\psi, \gamma_1\ran+\lan ik U\pa_y\psi, \pa_y \gamma_1\ran+\lan ik^3 U\psi, \gamma_1\ran \\
=&-\lan ik\pa^2_y U \psi,\gamma_1\ran -2\lan ik \pa_y U\psi,\pa_y \gamma_1\ran -\lan ik U\psi, (\pa^2_y-k^2)\gamma_1\ran\\
=&-\lan ik \pa^2_y U \psi,\gamma_1\ran-2\lan ik \pa_y U\psi,\pa_y\gamma_1\ran,
\end{align*}
and using \eqref{equ:U,H4,estimate}, Lemma \ref{lemma: sinh}, we deduce
\begin{align}\label{boundary0}
|(\nu \pa_y\om-f_2)(0)|\lesssim \nu |k|\|\om\|_{L^\infty}+|k|^{\frac12}\|(f_1,f_2)\|_{L^2}+|k|^{\frac32}\|\psi\|_{L^2}.
\end{align}

Similarly, we have
\begin{align}\label{boundary1}
|(\nu \pa_y\om-f_2)(1)|\lesssim \nu |k|\|\om\|_{L^\infty}+|k|^{\frac12}\|(f_1,f_2)\|_{L^2}+|k|^{\frac32}\|\psi\|_{L^2}.
\end{align}

Inserting \eqref{boundary0} and \eqref{boundary1} into \eqref{dtomL2+nupaomL2}, we derive
\begin{align}\label{omL2payomL2}
&\frac12\frac{d}{dt}\|\om\|^2_{L^2}+\nu \|(\pa_y,k)\om\|^2_{L^2}\\
\lesssim  & \|u\|^2_{L^2}+\nu^{-1}\|(f_1,f_2)\|^2_{ L^2}+\|\om\|_{L^\infty}(\nu |k|\|\om\|_{L^\infty}+|k|^{\frac12}\|(f_1,f_2)\|_{L^2}+|k|^{\frac32}\|\psi\|_{L^2}).\notag
\end{align}
By the interpolation inequality
$\|\om\|_{L^\infty}\leq \|\om\|^{\frac12}_{L^2}\|(\pa_y,k)\om\|^{\frac12}_{L^2}$,
and Young's inequality, we have
\begin{align*}
\nu |k| \|\om\|^2_{L^\infty}\leq &  \eta \nu \|(\pa_y,k) \om\|^2_{L^2}+C_{\eta}\nu k^2 \|\om\|^2_{L^2},\\
|k|^{\frac12}\|\om\|_{L^\infty}\|(f_1,f_2)\|_{L^2}\leq & \eta \nu\|(\pa_y,k)\om\|^2_{L^2}+C_{\eta}\nu^{-1}\|(f_1,f_2)\|^2_{L^2},\\
|k|^{\frac32}\|\psi\|_{L^2} \|\om\|_{L^\infty}\leq & \eta\nu \|(\pa_y,k)\om\|^2_{L^2}+C_{\eta}(\nu^{-\frac14}|k|^{\frac32} \|\psi\|_{L^2}\|\om\|^{\frac12}_{L^2})^{\frac43}\\
\leq &\eta\nu \|(\pa_y,k)\om\|^2_{L^2}+C_{\eta} \nu^{-\frac12}|k|^{-1}(|k|^2\|u\|^2_{L^2}+\nu^{\frac12}|k|\|\om\|^2_{L^2}).
\end{align*}
Substituting the estimates above into \eqref{omL2payomL2} with $\eta\ll 1$, we arrive at
\begin{align*}
\frac12\frac{d}{dt}\|\om\|^2_{L^2}+\nu \|(\pa_y,k)\om\|^2_{L^2}\lesssim \nu^{-1}\|(f_1,f_2)\|^2_{L^2}+\nu^{-\frac12}|k|^{-1}(|k|^2\|u\|^2_{L^2}+\nu^{\frac12}|k|\|\om\|^2_{L^2}),
\end{align*}
which implies that
\begin{align*}
&\frac{d}{dt}\|e^{\epsilon \nu^{\frac13}t}\om\|^2_{ L^2}+\nu \|e^{\epsilon \nu^{\frac13}t}(\pa_y,k)\om\|^2_{L^2}\\
&\lesssim \nu^{-1}\|e^{\epsilon \nu^{\frac13}t}(f_1,f_2)\|^2_{L^2}+\nu^{-\frac12}|k|^{-1}(|k|^2\|e^{\epsilon \nu^{\frac13}t}u\|^2_{L^2}+\nu^{\frac12}|k|\|e^{\epsilon \nu^{\frac13}t}\om\|^2_{L^2}).
\end{align*}
Thanks to \eqref{esti:omL2L2} and $\nu k^2\lesssim 1$, we deduce
\begin{align}\label{omLinftyL2}
&\nu^{\frac12}|k|( \|e^{\epsilon \nu^{\frac13}t}\om\|^2_{L^\infty L^2}+\nu \|e^{\epsilon \nu^{\frac13}t}(\pa_y,k)\om\|^2_{L^2 L^2})\\
&\lesssim  E^{\mathrm{in}}+ \nu^{-1}\|e^{\epsilon \nu^{\frac13}t}(f_1,f_2)\|^2_{L^2 L^2} +|k|^2\|e^{\epsilon \nu^{\frac13}t}u\|^2_{L^2L^2}+\nu^{\frac12}|k|\|e^{\epsilon \nu^{\frac13}t}\om\|^2_{L^2L^2}\notag\\
&\lesssim   E^{\mathrm{in}}+\nu^{-1}\|e^{\epsilon \nu^{\frac13}t}(f_1,f_2)\|^2_{L^2L^2}.\notag
\end{align}

\textbf{Step 2.} Estimate of $\n{e^{\epsilon \nu^{\frac13}t}\sqrt{1-(2y-1)^2}\om}_{L^{\infty}L^{2}}$.\smallskip

Testing \eqref{equ: om} by $(1-(2y-1)^2)\om$ and taking the real part, we obtain
\begin{align}\label{weight esti}
&\frac12 \frac{d}{dt} \|\sqrt{1-(2y-1)^2} \om\|^2_{L^2}+\nu \|\sqrt{1-(2y-1)^2}(\pa_y,k) \om\|^2_{L^2}\\
&\leq  \nu|\operatorname{Re}\lan \pa_y \om,  2(2y-1) \om\ran| + |\operatorname{Re} \lan ik \pa^2_y U\psi, (1-(2y-1)^2)\om\ran |\notag\\
&\quad+|\lan ik f_1, (1-(2y-1)^2)\om\ran|+|\lan f_2, \pa_y((1-(2y-1)^2)\om)\ran|.\notag
\end{align}
By integration by parts and H\"older's inequality, \eqref{equ:U,H4,estimate}, we obtain
\begin{align}
\nu |\operatorname{Re} \lan \pa_y \om,  2(2y-1) \om\ran|\leq &2\nu \|\pa_y \om\|_{L^2}\|\om\|_{L^2}\lesssim \nu^{\frac32}\|\pa_y\om\|^2_{L^2}+\nu^{\frac12} \|\om\|^2_{L^2}, \label{inner payomom}\\
|\operatorname{Re} \lan ik \pa^2_y U\psi, (1-(2y-1)^2)\om\ran |=&|\operatorname{Re} \lan ik \pa_y(\pa^2_y U(1-(2y-1)^2))\psi,\pa_y\psi\ran |\label{inner phiom}\\
\lesssim &|k|(\|\pa^3_y U\|_{L^\infty}+\|\pa^2_y U\|_{L^\infty})\|\psi\|_{L^2}\|\pa_y\psi\|_{L^2}\lesssim \|u\|^2_{L^2},\notag
\end{align}
and
\begin{align}\label{inner f1om,f2om}
&|\lan ik f_1,(1-(2y-1)^2)\om\ran|+|\lan f_2, \pa_y((1-(2y-1)^2)\om)\ran| \\
&\leq |k|\|f_1\|_{L^2}\|\sqrt{1-(2y-1)^2}\om\|_{L^2}+|\lan f_2, -4y \om+(1-(2y-1)^2)\pa_y\om\ran \notag\\
&\leq C\nu^{-1}\|(f_1, f_2)\|^2_{L^2} +\frac12\nu\|\sqrt{1-(2y-1)^2} (\pa_y, k)\om\|^2_{L^2}+\nu\|\om\|^2_{L^2}.\notag
\end{align}
Inserting \eqref{inner payomom}--\eqref{inner f1om,f2om} into \eqref{weight esti}, we have
\begin{align*}
&\frac{d}{dt} \|\sqrt{1-(2y-1)^2} \om\|^2_{L^2}+\nu \|\sqrt{1-(2y-1)^2}(\pa_y,k) \om\|^2_{L^2}\\
&\lesssim  \nu^{\frac32}\|\pa_y\om\|^2_{L^2}+\nu^{\frac12} \|\om\|^2_{L^2}+\|u\|^2_{L^2}+\nu^{-1}\|(f_1, f_2)\|^2_{L^2},
\end{align*}
which yields that
\begin{align*}
& \|e^{\epsilon \nu^{\frac13}t}\sqrt{1-(2y-1)^2} \om\|^2_{L^\infty L^2}+\nu \|e^{\epsilon \nu^{\frac13}t}\sqrt{1-(2y-1)^2}(\pa_y,k) \om\|^2_{L^2 L^2}\\
&\lesssim \|\om^{\mathrm{in}}_{k}\|^2_{L^2}+\nu^{\frac13}\|e^{\epsilon \nu^{\frac13}t}\sqrt{1-(2y-1)^2}\om\|^2_{L^2L^2}+ \nu^{\frac32}\|e^{\epsilon \nu^{\frac13}t}\pa_y\om\|^2_{L^2L^2}\\
&\quad+\nu^{\frac12} \|e^{\epsilon \nu^{\frac13}t}\om\|^2_{L^{2}L^2}+\|e^{\epsilon \nu^{\frac13}t}u\|^2_{L^2L^2}+\nu^{-1}\|e^{\epsilon \nu^{\frac13}t}(f_1, f_2)\|^2_{L^2L^2}.
\end{align*}

Noticing that $\rho_{k}=1$ for $y\in (\nu^{\frac16} ,1-\nu^{\frac16})$, we have
\begin{align*}
&\nu^{\frac13}\|e^{\epsilon \nu^{\frac13}t}\sqrt{1-(2y-1)^2}\om\|^2_{L^2(0,1)}\\
&\leq \nu^{\frac13}\|e^{\epsilon \nu^{\frac13}t}\rho^{\frac12}_k\om\|^2_{L^2(\nu^{\frac16}, 1-\nu^{\frac16})}+\nu^{\frac13}\|1-(2y-1)^2\|_{L^{\infty}((0,\nu^{\frac16})\cup(1-\nu^{\frac16},1))}\|e^{\epsilon \nu^{\frac13}t}\om\|^2_{L^2}\\
&\lesssim \nu^{\frac13}\|e^{\epsilon \nu^{\frac13}t}\rho^{\frac12}_k\om\|^2_{L^2}+\nu^{\frac12}\|e^{\epsilon \nu^{\frac13}t}\om\|^2_{L^2}.
\end{align*}
Then, using \eqref{esti:omL2L2} and \eqref{omLinftyL2}, we arrive at
\begin{align}\label{weightomL2}
\|e^{\epsilon \nu^{\frac13}t}\sqrt{1-(2y-1)^2} \om\|^2_{L^\infty L^2}
\lesssim   E^{\mathrm{in}}+\nu^{-1}\|e^{\epsilon \nu^{\frac13}t}(f_1, f_2)\|^2_{L^2L^2}.
\end{align}

\textbf{Step 3.} Estimate of $\n{e^{\epsilon \nu^{\frac13}t}u}_{L^{\infty} L^{\infty}}$.\smallskip

Let us introduce the cutoff function $\rho$ defined by
\begin{align*}
\rho(y)=
\left\{
\begin{aligned}
&1, \qquad \qquad\qquad\qquad\,\  y\in (\nu^{\frac14}, 1-\nu^{\frac14}),\\
&\frac12\nu^{-\frac14}(1-|2y-1|),\quad  y\in (0,\nu^{\frac{1}{4}})\cup(1-\nu^{\frac{1}{4}},1).
\end{aligned}
\right.
\end{align*}
Then we have
\begin{align*}
\|u\|_{ L^\infty}\leq \|\om\|_{L^1}=&\int^{\nu^{\frac12}}_0|\om|dy+ \int^{1-\nu^{\frac12}}_{\nu^{\frac12}}|\om|dy+\int^{1}_{1-\nu^{\frac12}}|\om|dy\\
\leq &2\nu^{\frac14}\|\om\|_{L^2}+\|\rho^{-1}\|_{L^2(\nu^{\frac12}, 1-\nu^{\frac12})}\|\rho \om\|_{L^2} .
\end{align*}
Noticing that
\begin{align*}
\|\rho^{-1}\|^2_{L^2(\nu^{\frac12}, 1-\nu^{\frac12})}\lesssim &\int^{1-\nu^{\frac14}}_{\nu^{\frac14}} 1 dy+\int^{1-\nu^{\frac12}}_{1-\nu^{\frac14}} \nu^{\frac12}(1-|2y-1|)^{-2} dy+\int^{\nu^{\frac14}}_{\nu^{\frac12}} \nu^{\frac12}(1-|2y-1|)^{-2} dy\\
\lesssim &1-2\nu^{\frac14}+\nu^{\frac12}(\nu^{-\frac12}-\nu^{-\frac14})\lesssim 1,
\end{align*}
we get by \eqref{omLinftyL2} that
\begin{align}\label{esti:uLinftyLinfty}
\|e^{\epsilon \nu^{\frac13}t }u\|^2_{L^\infty L^\infty}\leq &\nu^{\frac12}\|e^{\epsilon \nu^{\frac13}t }\om\|^2_{L^\infty L^2}+\|e^{\epsilon \nu^{\frac13}t }\rho \om\|^2_{L^\infty L^2}\\
\lesssim  &  E^{\mathrm{in}}+\nu^{-1}\|e^{\epsilon \nu^{\frac13}t}(f_1, f_2)\|^2_{L^2L^2}+\|e^{\epsilon \nu^{\frac13}t }\rho \om\|^2_{L^\infty L^2}. \notag
\end{align}

We are left to estimate $\|e^{\epsilon \nu^{\frac13}t }\rho \om\|_{L^\infty L^2}$. To do this,
we introduce
a $C^{2}$-smooth function
\begin{align*}
\widetilde{\rho}(y)=
\left\{
\begin{aligned}
&1, \qquad \qquad\qquad\qquad\qquad\qquad\quad\,\ y\in (\nu^{\frac14}, 1-\nu^{\frac14}),\\
&1+\Big(\frac12\nu^{-\frac14}(1-|2y-1|)-1\Big)^3, \quad y\in (0,\nu^{\frac{1}{4}})\cup(1-\nu^{\frac{1}{4}},1),
\end{aligned}
\right.
\end{align*}
which satisfies the following properties
\begin{align}\label{esti: rho}
\rho\leq \widetilde{\rho},\qquad |\pa_y \widetilde{\rho}|\lesssim \nu^{-\frac14},\qquad \|\pa_y\widetilde{\rho}\|_{L^2}\lesssim \nu^{-\frac18}, \qquad |\pa^2_y \widetilde{\rho}|\lesssim \nu^{-\frac12}.
\end{align}

Testing \eqref{equ: om} by $\widetilde{\rho}^2\om$ and taking the real part, we have
\begin{align}\label{rhoomL2}
&\frac12\frac{d}{dt}\|\widetilde{\rho}\om\|^2_{L^2}+\nu\|\widetilde{\rho}(\pa_y,k)\om\|^2_{L^2}\\
&\leq |\operatorname{Re}(\nu\lan \pa_y\om, \pa_y(\widetilde{\rho}^2) \om\ran+\lan ik \pa^2_y U\psi, \widetilde{\rho}^2\om\ran) |+|\lan ik f_1, \widetilde{\rho}^2\om\ran|+|\lan \pa_y f_2, \widetilde{\rho}^2\om \ran|.\notag
\end{align}

Using the integration by parts, \eqref{esti: rho} and \eqref{equ:U,H4,estimate}, we obtain
\begin{align}
&\nu|\lan \pa_y\om,\pa_y(\widetilde{\rho}^2) \om\ran|\leq \nu\Big|\int^1_0 \pa^2_y(\widetilde{\rho}^2)|\om|^2dy \Big|\lesssim \nu^{\frac12}\|\om\|^2_{L^2},\label{inner:payomrhoom}
\end{align}
and
\begin{align}\label{inner:psiom}
&|\operatorname{Re}\lan ik \pa^2_yU\psi, \widetilde{\rho}^2\om\ran|\leq |\lan k \pa_y(\pa^2_yU \widetilde{\rho}^2)\psi, \pa_y \psi\ran |\\
&\leq \|\pa^3_yU \widetilde{\rho}^2\|_{L^\infty} \|k\psi\|_{L^2}\|\pa_y \psi\|_{L^2}+2\|\pa^2_yU\widetilde{\rho}\pa_y\widetilde{\rho} \|_{L^2}|k|\|\psi\|_{L^\infty((0,\nu^{\frac{1}{4}})\cup(1-\nu^{\frac{1}{4}},1))}
\|\pa_y\psi\|_{L^2}\notag\\
&\lesssim \n{u}_{L^{2}}^{2}+\nu^{-\frac{1}{8}}|k|\|\psi\|_{L^\infty((0,\nu^{\frac{1}{4}})\cup(1-\nu^{\frac{1}{4}},1))}\|u\|_{L^2}\lesssim |k|\n{u}_{L^{2}}^{2},\notag
\end{align}
where in the last inequality we have used that
\begin{align*}
\|\psi\|_{L^\infty((0,\nu^{\frac{1}{4}})\cup(1-\nu^{\frac{1}{4}},1))}=\bbn{\int^{y}_{0}\pa_y\psi(z)dz}_{L^\infty(0,\nu^{\frac14})}+ \bbn{\int^{y}_{1}\pa_y\psi(z)dz}_{L^\infty(1-\nu^{\frac14},1)}\lesssim \nu^{\frac{1}{8}}\n{\pa_{y}\psi}_{L^{2}}.
\end{align*}

For $|\lan ik f_1, \widetilde{\rho}\om\ran|+|\lan \pa_y f_2, \widetilde{\rho}\om \ran|$, we get by using Cauchy-Schwarz inequality and \eqref{esti: rho} that
\begin{align}\label{inner: f1rhoom+f2rhoom}
&|\lan ik f_1, \widetilde{\rho}\om\ran|+|\lan \pa_y f_2, \widetilde{\rho}\om \ran|\\
&\leq \nu^{-1}\|f_1\|_{L^2}+\frac14 \nu \|k \widetilde{\rho}\om\|^2_{L^2}+\nu^{-1}\|f_2\|^2_{L^2}+\frac14 \nu \|\widetilde{\rho}\pa_y \om\|^2_{L^2}+\frac14\nu \|\pa_y\widetilde{\rho}\om\|^2_{L^2}\notag\\
&\leq  \nu^{-1}\|(f_1,f_2)\|^2_{L^2}+\frac14\nu \|\widetilde{\rho}(\pa_y,k)\om\|^2_{L^2}+\nu^{\frac12}\|\om\|^2_{L^2}.\notag
\end{align}

Combining \eqref{inner:payomrhoom}, \eqref{inner:psiom} and \eqref{inner: f1rhoom+f2rhoom} with \eqref{rhoomL2}, we deduce
\begin{align*}
\frac12\frac{d}{dt}\|\widetilde{\rho}\om\|^2_{L^2}+\nu\|\widetilde{\rho}(\pa_y,k)\om\|^2_{L^2}
\lesssim &\nu^{-1}\|(f_1,f_2)\|^2_{L^2} +\nu^{\frac12}\|\om\|^2_{L^2}+|k|^2\|u\|^2_{L^2}.
\end{align*}
Recalling the definition \eqref{equ:def,rho,k} of the weight function $\rho_{k}$, by a direct calculation, we have
$\widetilde{\rho}\leq \rho^{\frac12}_{k} $. Then, we use the space-time estimate \eqref{esti:omL2L2} to obtain
\begin{align*}
&\|e^{\epsilon \nu^{\frac13}t}\widetilde{\rho}\om\|^2_{L^\infty L^2}+\nu\|e^{\epsilon \nu^{\frac13}t}\widetilde{\rho}(\pa_y,k)\om\|^2_{L^2 L^2}\\
&\lesssim  E^{\mathrm{in}}+\nu^{\frac13}\|e^{\epsilon \nu^{\frac13}t}\rho^{\frac12}_{k}\om\|^2_{L^2L^2} +\nu^{\frac12}\|e^{\epsilon \nu^{\frac13}t}\om\|^2_{L^2L^2}+|k|^2\|e^{\epsilon \nu^{\frac13}t}u\|^2_{L^2L^2}+\nu^{-1}\|e^{\epsilon \nu^{\frac13}t}(f_1,f_2)\|^2_{L^2L^2}\\
&\lesssim  E^{\mathrm{in}}+\nu^{-1}\|e^{\epsilon \nu^{\frac13}t}(f_1,f_2)\|^2_{L^2L^2}.
\end{align*}
Inserting the estimate above into \eqref{esti:uLinftyLinfty}, with $\rho \lesssim \wt{\rho}$, we arrive at
\begin{align}\label{uLinftyLinfty}
\|e^{\epsilon \nu^{\frac13}t}u\|^2_{L^\infty L^\infty}\lesssim E^{\mathrm{in}}+\nu^{-1}\|e^{\epsilon \nu^{\frac13}t}(f_1,f_2)\|^2_{L^2L^2}.
\end{align}

This completes the proof of Proposition \ref{lemma:est,u,om,timespace,f12}.
\end{proof}

\section{Nonlinear stability}\label{nonlinear tran thre}
This section is devoted to deriving the asymptotic stability threshold of the monotone shear flow with no-slip boundary condition, using the space-time estimates in Section \ref{sec: space-time estimate}.

Let us recall the stability norms
\begin{equation*}
E_k=\left\{
\begin{aligned}
&\|\om_0\|_{L^\infty L^2}, \quad k=0,\\
&|k|\| e^{\epsilon_0 \nu^{\frac13}t} u_k\|_{L^2L^2}+\|e^{\epsilon_0 \nu^{\frac13}t} u_k\|_{L^\infty L^\infty}\\
&+\|e^{\epsilon_0 \nu^{\frac13}t} \sqrt{1-(2y-1)^2} \om_k\|^2_{L^\infty L^2}+\nu^{\frac14}|k|^{\frac12}\|e^{\epsilon_0 \nu^{\frac13}t} \om_k\|_{L^2L^2}, \quad k\neq 0.
\end{aligned}
\right.
\end{equation*}
\begin{proof}[Proof of Theorem \ref{Th: tran thre}]
The equation \eqref{equ: omli11} can be rewritten as
\begin{align*}
\pat\om-\nu\De \om+U\pa_x\om-\pa^2_y U\pa_x\psi=-\operatorname{curl}(u\cdot\na u).
\end{align*}
Taking the Fourier transform with respect to $x$-variable, we obtain the $k$--frequency formulation of the equation:
\begin{align*}
&\pa_t \om_k-\nu (\pa^2_y-k^2) \om_k+ikU \om_k-ik\pa^2_y U\psi_k\\
&\quad=-\pa_y(u\cdot\na u^{(1)})_k+ik(u\cdot\na u^{(2)})_k
:=-\pa_y (f^{1,1}_k+f^{2,1}_k)+ik(f^{1,2}_k+f^{2,2}_k),
\end{align*}
where
\begin{align*}
&f^{1,1}_k=i\sum_{l\in\Z} u^{(1)}_l(t,y)(k-l)u^{(1)}_{k-l}(t,y), \quad f^{1,2}_k=i\sum_{l\in\Z} u^{(1)}_l(t,y)(k-l)u^{(2)}_{k-l}(t,y),\\
&f^{2,1}_k=\sum_{l\in \Z} u^{(2)}_l(t,y) \pa_y u^{(1)}_{k-l}(t,y), \qquad\quad f^{2,2}_k=\sum_{l\in \Z} u^{(2)}_l(t,y) \pa_y u^{(2)}_{k-l}(t,y).
\end{align*}

We now analyze the energy term $E_0$. The condition $\operatorname{div} u=0$ implies $u^{(2)}_0=0$.
Combined with $P_0(u^{(1)}\pa_x u^{(1)})=0,$ the zero-frequency mode of \eqref{pertu} for $u^{1}$ yields that
\begin{align}\label{new,equ:u01,def}
(\pa_t -\nu \pa^2_y) u^{(1)}_0(t,y)=-\sum_{l\in \Z\setminus \{0\}} u^{(2)}_l \pa_y u^{(1)}_{-l}(t,y):=-f_0^{2,1}(t,y).
\end{align}
Testing equation \eqref{new,equ:u01,def} by $-\pa_y^2u_0^{(1)}$ and using the integration by parts, we obtain
\begin{align*}
\pa_t \|\pa_y u^{(1)}_0\|^2_{L^2}+2\nu \|\pa^2_y u^{(1)}_0\|^2_{L^2}=2\lan f_0^{2,1}, \pa^2_y u^{(1)}_0\ran \leq C \nu^{-1}\|f_0^{2,1}\|^2_{L^2}+\frac12\nu\|\pa^2_y u^{(1)}_0\|^2_{L^2}.
\end{align*}
which, together with $\pa_yu_0^{(1)}(t,y)=\om_0(t,y)$, gives
\begin{align}\label{esti: E0}
E^2_0\lesssim \nu^{-1}\|f_0^{2,1}\|^2_{L^2L^2}+\|\om^{\mathrm{in}}_0\|^2_{L^2}.
\end{align}

For the energy term $E_k$ with $k\neq 0$, it follows from Proposition \ref{lemma:est,u,om,timespace,f12} that
\begin{equation}\label{esti: Ek}
\begin{aligned}
E^2_{k}\lesssim \nu^{-1} \|e^{\epsilon_0 \nu^{\frac13}t} f^{i,j}_k\|^2_{L^2L^2} +\|u^{\mathrm{in}}_k\|^2_{H^1}+|k|^{-2}\|\pa_y\om^{\mathrm{in}}_{k}\|^2_{L^2}, \quad 1\leq i,j\leq 2.
\end{aligned}
\end{equation}

For $f^{1,i}$ with $i=1,2$, by H\"older's inequality, we have
\begin{align}\label{esti: f1}
\|e^{\epsilon_0 \nu^{\frac13}t}f^{1,i}\|_{L^2L^2}\leq \sum_{l\in\Z}\|e^{\epsilon_0 \nu^{\frac13}t}u^{(1)}_l\|_{L^\infty L^\infty}\|(k-l)e^{\epsilon_0 \nu^{\frac13}t}u^{(i)}_{k-l}\|_{L^2L^2}\leq \sum_{l\in \Z} E_l E_{k-l}.
\end{align}

For $f^{2,i}$ with $i=1,2$, we first use Hardy's inequality and the fact that $\pa_{y}u^{(2)}=-iku^{(1)}$ to obtain
\begin{align*}
\Big\|\frac{e^{\epsilon_0 \nu^{\frac13}t}u^{(2)}_{l}}{\sqrt{1-(2y-1)^2}}\Big\|_{L^2 L^\infty}
\lesssim \|e^{\epsilon_0 \nu^{\frac13}t}\pa_y u^{(2)}_l\|_{L^2L^2}
\lesssim \|e^{\epsilon_0 \nu^{\frac13}t}lu^{(1)}_l\|_{L^2L^2}\lesssim E_{l}.
\end{align*}
This along with \eqref{esti: weightpayu} implies that
\begin{align}\label{esti: f2}
\|e^{\epsilon_0 \nu^{\frac13}t}f^{2,i}\|_{L^2L^2}
\leq & \sum_{l\in\Z} \Big\|\frac{e^{\epsilon_0 \nu^{\frac13}t}u^{(2)}_{l}}{\sqrt{1-(2y-1)^2}}\Big\|_{L^2 L^\infty}\|e^{\epsilon_0 \nu^{\frac13}t}\sqrt{1-(2y-1)^2} \pa_y u^{(i)}_{k-l}\|_{L^\infty L^2}\\
\lesssim & \sum_{l\in\Z} E_{l}\|e^{\epsilon_0 \nu^{\frac13}t}\sqrt{1-(2y-1)^2}\om_{k-l}\|_{L^\infty L^2}\lesssim \sum_{l\in\Z}E_{l}E_{k-l}.\notag
\end{align}

Inserting \eqref{esti: f1}, \eqref{esti: f2} into \eqref{esti: Ek} and \eqref{esti: f2} into \eqref{esti: E0}, we deduce
\begin{align*}
E_k\lesssim \nu^{-\frac12}\sum_{l\in\Z} E_{l}E_{k-l}+\|u^{\mathrm{in}}_k\|_{H^1}+|k|^{-1}\|\pa_y\om^{\mathrm{in}}_{k}\|_{L^2}, \quad k\in\Z.
\end{align*}
Summing up with $k$, we further obtain
\begin{align*}
\sum_{k\in\Z} E_k\lesssim \nu^{-\frac12}\sum_{k\in\Z}\sum_{l\in \Z} E_{l}E_{k-l} +\sum_{k\in\Z}(\|u^{\mathrm{in}}_k\|_{H^1}+|k|^{-1}\|\pa_y\om^{\mathrm{in}}_{k}\|_{L^2}).
\end{align*}
Thanks to the initial data condition
\begin{align*}
\|u^{\mathrm{in}}\|_{H^2}\leq c\nu^{\frac12},
\end{align*}
we finally derive
\begin{align*}
\sum_{k\in\Z}E_k\lesssim \nu^{\frac12},
\end{align*}
which completes the proof of Theorem \ref{Th: tran thre}.
\end{proof}

\appendix
\section{Auxiliary estimates}

\begin{lemma}\label{lemma: sinh}
There holds that
\begin{align}
&\bbn{\frac{\sinh{k(1-y)}}{\sinh k}}_{L^2}\lesssim |k|^{-\frac12}, \qquad \bbn{\frac{\sinh{(ky)}}{\sinh k}}_{L^2}\lesssim |k|^{-\frac12}, \label{esti:sinh}\\
&\bbn{\frac{\cosh{k(1-y)}}{\sinh k}}_{L^2}\lesssim |k|^{-\frac12}, \qquad \bbn{\frac{\cosh{(ky)}}{\sinh k}}_{L^2}\lesssim |k|^{-\frac12}. \label{esti:cosh}
\end{align}
\end{lemma}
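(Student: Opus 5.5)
These are elementary computations with explicit hyperbolic functions. By symmetry, the substitution $y\mapsto 1-y$ sends $\sinh(k(1-y))/\sinh k$ to $\sinh(ky)/\sinh k$, and likewise for $\cosh$. It therefore suffices to bound $\|\sinh(ky)/\sinh k\|_{L^2}$ and $\|\cosh(ky)/\sinh k\|_{L^2}$. Both functions are even in $k$ up to sign, so we may assume $k\ge 1$; here $k\in\Z\setminus\{0\}$, so $|k|\ge1$ in all applications.

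\textbf{Step 1.} Use the pointwise bounds $0\le\sinh(ky)\le\cosh(ky)\le e^{ky}$ for $y\in[0,1]$. For $k\ge1$ we also have $\sinh k=\tfrac12 e^{k}(1-e^{-2k})\ge \tfrac12(1-e^{-2})e^{k}$. This gives
\begin{align*}
\frac{|\sinh(ky)|+|\cosh(ky)|}{\sinh k}\lesssim e^{-k(1-y)},\qquad y\in[0,1].
\end{align*}

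\textbf{Step 2.} Integrate the square of this bound:
\begin{align*}
\int_0^1 e^{-2k(1-y)}\,dy=\frac{1-e^{-2k}}{2k}\le \frac{1}{2k}.
\end{align*}
Taking square roots gives the bound $\lesssim |k|^{-\frac12}$ for all four quantities in \eqref{esti:sinh}--\eqref{esti:cosh}.

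\textbf{Step 3.} The only point requiring care is the lower bound for $\sinh k$, which needs $|k|$ bounded away from $0$. This is guaranteed by $k\in\Z\setminus\{0\}$. If real $k$ near $0$ were allowed, the $\cosh$ bounds would fail as $k\to0$, because $\cosh(ky)/\sinh k\sim 1/k$. So I would state explicitly that $|k|\ge1$ is assumed, as in every application of the lemma.
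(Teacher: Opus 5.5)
Your proof is correct and complete. The paper states this lemma without proof, and your argument fills it in the standard way: reflect $y\mapsto 1-y$, bound pointwise by $e^{-|k|(1-y)}$, then integrate directly. This is the same exponential decay the paper uses implicitly in \eqref{equ:f,Linf}--\eqref{equ:pa,f,Linf}. Your caveat that $|k|\ge 1$ is needed is accurate, since $\cosh(ky)/\sinh k\sim 1/k$ as $k\to 0$ would violate the $\cosh$ bounds. It does no harm here because $k\in\Z\setminus\{0\}$ in every application.
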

\begin{lemma}
Let $(\pa^2_y- k^2)\psi=\om$ with $\psi(\pm1)=0$. Then we have
\begin{align}\label{esti: weightpayu}
\|\sqrt{1-(2y-1)^2} \pa_y u\|^2_{L^2}\leq \|\sqrt{1-(2y-1)^2}\om\|^2_{L^2}.
\end{align}
\end{lemma}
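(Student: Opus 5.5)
The plan is to expand the right-hand side directly and compare it term by term with the left-hand side. Only the Dirichlet condition $\psi(0)=\psi(1)=0$ (written $\psi(\pm1)=0$ in the statement) and the explicit weight are needed.

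Set $w(y)=1-(2y-1)^2=4y(1-y)$, so that $w\ge0$, $w(0)=w(1)=0$ and $w''\equiv-8$. With $u=(\pa_y\psi,-ik\psi)$ we have $\pa_y u=(\pa_y^2\psi,-ik\pa_y\psi)$, and the left-hand side is
\begin{align*}
\int_0^1 w\big(|\pa_y^2\psi|^2+k^2|\pa_y\psi|^2\big)\,dy .
\end{align*}
Substituting $\om=\pa_y^2\psi-k^2\psi$ into the right-hand side and expanding gives
\begin{align*}
\int_0^1 w|\om|^2\,dy=\int_0^1 w|\pa_y^2\psi|^2\,dy+k^4\int_0^1 w|\psi|^2\,dy-2k^2\operatorname{Re}\int_0^1 w\,\pa_y^2\psi\,\overline{\psi}\,dy .
\end{align*}

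The cross term is the only place any care is needed. First I would integrate by parts once; the boundary term $w\,\pa_y\psi\,\overline{\psi}$ vanishes at $y=0,1$ because $\psi=0$ there. This gives
\begin{align*}
-\operatorname{Re}\int_0^1 w\,\pa_y^2\psi\,\overline{\psi}\,dy=\int_0^1 w|\pa_y\psi|^2\,dy+\operatorname{Re}\int_0^1 w'\,\pa_y\psi\,\overline{\psi}\,dy .
\end{align*}
Next I would write the last integrand as $\tfrac12 w'\,\pa_y|\psi|^2$ and integrate by parts once more. The boundary terms again vanish since $\psi(0)=\psi(1)=0$, so this integral equals
\begin{align*}
-\tfrac12\int_0^1 w''|\psi|^2\,dy=4\int_0^1|\psi|^2\,dy\ \ge 0 .
\end{align*}

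Putting the pieces together yields the identity
\begin{align*}
\int_0^1 w|\om|^2\,dy=\int_0^1 w\big(|\pa_y^2\psi|^2+2k^2|\pa_y\psi|^2+k^4|\psi|^2\big)\,dy+8k^2\int_0^1|\psi|^2\,dy .
\end{align*}
Every term on the right is nonnegative, so the identity dominates $\int_0^1 w(|\pa_y^2\psi|^2+k^2|\pa_y\psi|^2)\,dy$, which is the claimed bound. There is no genuine obstacle here. The only points to check are that both boundary terms vanish under the Dirichlet condition alone, with no need for $\pa_y\psi=0$ or for vanishing of the weight at the boundary, and that the concavity of $w$ makes the lower-order term $-\tfrac12\int w''|\psi|^2$ carry a favourable sign.
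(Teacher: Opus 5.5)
Your proposal is correct and follows essentially the same route as the paper: expand $|\om|^2$, integrate the cross term $-2k^2\operatorname{Re}\int_0^1 w\,\pa_y^2\psi\,\overline{\psi}\,dy$ by parts twice using only $\psi(0)=\psi(1)=0$, and observe that the resulting lower-order term $\int_0^1|\psi|^2\,dy$ has a favourable sign. Your constant $8k^2\int_0^1|\psi|^2\,dy$ is the correct one; the paper's displayed $4\int_0^1|k\psi|^2\,dy$ drops a factor of $2$ in the intermediate step, which is harmless since only nonnegativity of this term is used.
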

\begin{proof}
By the definition of $\om$, we have
\begin{align*}
&\|\sqrt{1-(2y-1)^2}\om\|^2_{L^2}=\int^{1}_{0}(1-(2y-1)^2)|\om|^2dy\\
=&\int^{1}_{0} (1-(2y-1)^2)(|\pa^2_y\psi|^2+|k^2\psi|^2)dy-2\re\int^{1}_{0}(1-(2y-1)^2) \pa^2_y\psi |k|^2\psi dy\\
=&\int^{1}_{0} (1-(2y-1)^2)(|\pa^2_y\psi|^2+|k^2\psi|^2+2|k\pa_y\psi|^2)dy-4\re\int^{1}_{0}(2 y-1) \pa_y \psi |k|^2\psi dy\\
=& \int^{1}_{0} (1-(2y-1)^2)(|\pa^2_y\psi|^2+|k^2\psi|^2+2|k\pa_y\psi|^2)dy+4\int^{1}_{0}|k\psi|^2dy,
\end{align*}
which implies that
\begin{align*}
\|\sqrt{1-(2y-1)^2} \pa_y u\|^2_{L^2}\leq \|\sqrt{1-(2y-1)^2}\om\|^2_{L^2}.
\end{align*}
\end{proof}

\begin{lemma}\label{lemma:heat,dirichlet,H2}
Let $f(t)$ be the solution to the 1D heat equation with the Dirichlet boundary condition
\begin{equation}
\left\{
\begin{aligned}
&\pa_{t}f=\nu \pa_{y}^{2}f,\\
&f(t,0)=f(t,1)=0,\quad f(0,y)=f^{\mathrm{in}}(y)\in H^{2}\cap H_{0}^{1}(0,1).
\end{aligned}
\right.
\end{equation}
Then we have
\begin{align}\label{equ:heat,dirichlet,H2}
\n{f}_{L_{t}^{\infty}H^{k}}\leq \n{f^{\mathrm{in}}}_{H^{k}},\quad k=0,1,2.
\end{align}
\end{lemma}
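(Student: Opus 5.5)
The statement is the $L^\infty_tH^k$ bound for the Dirichlet heat equation on $(0,1)$, for $k=0,1,2$. I would prove it by energy estimates, applied to $f$, $\pa_y f$ and $\pa_y^2 f$ in turn. At each level the boundary terms must either vanish or carry a favourable sign. I would first work with smooth solutions, for instance by truncating the sine-series expansion of $f^{\mathrm{in}}$, and then pass to the limit using lower semicontinuity of norms.

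\textbf{Case $k=0$.} Test $\pa_t f=\nu\pa_y^2 f$ with $f$ and integrate by parts. Since $f(t,0)=f(t,1)=0$, the boundary term $\nu[\pa_y f\, f]_0^1$ vanishes, and we get
$$\frac12\frac{d}{dt}\n{f}_{L^2}^2=-\nu\n{\pa_y f}_{L^2}^2\le 0.$$

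\textbf{Case $k=1$.} Test with $-\pa_y^2 f$. Integrating by parts,
$$\lan \pa_t f,-\pa_y^2 f\ran=\frac12\frac{d}{dt}\n{\pa_y f}_{L^2}^2-[\pa_t f\,\pa_y f]_0^1 .$$
Differentiating the boundary condition in time gives $\pa_t f=0$ at $y=0,1$, so the boundary term drops. Hence
$$\frac12\frac{d}{dt}\n{\pa_y f}_{L^2}^2=-\nu\n{\pa_y^2 f}_{L^2}^2\le 0.$$

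\textbf{Case $k=2$.} Observe that $g=\pa_y^2 f=\nu^{-1}\pa_t f$ vanishes at $y=0,1$ for $t>0$, because $\pa_t f$ does. So $g$ again solves the Dirichlet heat equation, and the $k=0$ argument gives $\n{\pa_y^2 f(t)}_{L^2}\le \n{\pa_y^2 f^{\mathrm{in}}}_{L^2}$. Only the trace of $f^{\mathrm{in}}$ enters here, and $\pa_y^2 f^{\mathrm{in}}$ being in $L^2$ suffices to start the estimate.

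Summing the three levels gives monotonicity of each seminorm, and therefore $\n{f(t)}_{H^k}\le\n{f^{\mathrm{in}}}_{H^k}$ for all $t\ge0$.

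Alternatively, the whole statement follows from the sine expansion $f=\sum_n a_n e^{-\nu n^2\pi^2 t}\sin(n\pi y)$, where each $H^k$ seminorm is a weighted $\ell^2$ norm of the $a_n$ with decaying weights. For $k=2$ this identification uses exactly $f^{\mathrm{in}}\in H^2\cap H^1_0$.

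The main obstacle is this $k=2$ step, which requires $\pa_y^2 f$ to vanish on the boundary for positive times. If I want to avoid that regularity subtlety, I would run the Fourier-series argument instead, since there the $H^2$ norm equivalence on $H^2\cap H^1_0$ is standard.
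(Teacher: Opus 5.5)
Your proposal is correct and is essentially the paper's argument: energy estimates for $k=0,1$, and for $k=2$ the $L^2$ energy estimate for $\partial_y^2 f$, made rigorous by truncating to finitely many Dirichlet eigenfunctions ($f_N=P_{\le N}f$) and passing to the limit. The one point worth stating explicitly is $\|\partial_y^2 f_N^{\mathrm{in}}\|_{L^2}\le\|\partial_y^2 f^{\mathrm{in}}\|_{L^2}$, which holds because $f^{\mathrm{in}}\in H^1_0$ makes the sine coefficients of $\partial_y^2 f^{\mathrm{in}}$ equal to $-n^2\pi^2$ times those of $f^{\mathrm{in}}$.
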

\begin{proof}
For $k=0,1$, the lemma follows from the standard energy method. For $k=2$, we employ an approximation argument.
Let $P_{\leq N}$ denote the spectral projection onto the first $N$ eigenfunctions of the Dirichlet Laplacian, and define $f_N = P_{\leq N} f$.
Applying the energy method to $f_N$, we obtain
\begin{align}
\n{\pa_{y}^{2}f_{N}}_{L_{t}^{\infty}L^{2}}\leq \n{\pa_{y}^{2}f_{N}^{\mathrm{in}}}_{L^{2}}\leq
\n{\pa_{y}^{2}f^{\mathrm{in}}}_{L^{2}}.
\end{align}
Passing to the limit, we deduce the same bound for $f$.
\end{proof}

\section*{Acknowledgments}
Z. Li was partially supported by the Postdoctoral Fellowship Program of CPSF (Grant No: GZC20240123 and 2025M773073). S. Shen is partially supported by NSF of China under Grant 12501322 and Anhui Provincial NSF 2508085QA001.
Z. Zhang is partially supported by NSF of China under Grant 12288101.

\bigskip

\medskip
\noindent\textbf{Data Availability Statement:}
Data sharing is not applicable to this article as no datasets were generated or analysed during the current study.

\noindent\textbf{Conflict of Interest:}
The authors declare that they have no conflict of interest.

\end{document}